\documentclass[12pt,reqno]{amsart}


\headheight=6.15pt
\textheight=8.75in
\textwidth=6.5in
\oddsidemargin=0in
\evensidemargin=0in
\topmargin=0in

\usepackage{epsfig}
\usepackage{amsmath, amsthm, amsfonts, amssymb}
\usepackage{graphicx}

\numberwithin{equation}{section}

\newcommand{\C}{{\mathbb C}}
\newcommand{\N}{{\mathbb N}}

\newcommand{\Z}{{\mathbb Z}}

\newcommand{\K}{{\mathbb K}}

\newtheorem{theo}{{\sc \bf Theorem}}[section]

\newtheorem{lem}[theo]{{\sc \bf Lemma}}
\newtheorem{prop}[theo]{{\sc \bf Proposition}}

\newenvironment{defin}{\medskip\noindent{\bf Definition:\/} }{\medskip}

\begin{document}

\title{Derivations and Spectral Triples on Quantum Domains II: Quantum Annulus}

\author[Klimek]{Slawomir Klimek}
\address{Department of Mathematical Sciences,
Indiana University-Purdue University Indianapolis,
402 N. Blackford St., Indianapolis, IN 46202, U.S.A.}
\email{sklimek@math.iupui.edu}

\author[McBride]{Matt McBride}
\address{Department of Mathematics and Statistics,
Mississippi State University,
175 President's Cir., Mississippi State, MS 39762, U.S.A.}
\email{mmcbride@math.msstate.edu}

\author[Rathnayake]{Sumedha Rathnayake}
\address{Department of Mathematics,
University of Michigan,
530 Church St., Ann Arbor, MI 48109, U.S.A.}
\email{sumedhar@umich.edu}

\thanks{}

\date{\today}

\begin{abstract}
Continuing our study of spectral triples on quantum domains, we look at unbounded invariant and covariant derivations in the quantum annulus. In particular, we investigate whether such derivations can be implemented by operators with compact parametrices, a necessary condition in the definition of a spectral triple.
\end{abstract}

\maketitle
\section{Introduction}
Derivations in operator algebras are natural objects to investigate from the point of view of noncommutative geometry. In \cite{KMRSW1} we studied unbounded rotationally invariant and covariant derivations in the quantum disk.  We classified such derivations and looked at their implementations in various Hilbert spaces obtained from the GNS construction with respect to an invariant state. The question when such implementations are operators with compact parametrices was answered and, surprisingly, we found out that no implementation of a covariant derivation in any GNS Hilbert space for a faithful normal invariant state has compact parametrices for a large class of reasonable boundary conditions. 

This paper (and its prequel) arose from an attempt to construct geometric even spectral triples on quantum plane domains.  It is motivated by the classical example of the two-dimensional Dirac operator of the form:
\begin{equation*}
{\mathcal D} = \left[
\begin{array}{cc}
0 & D \\
D^* & 0
\end{array}\right],
\end{equation*}
where $D$ is the d-bar operator on a disk or an annulus in the complex plane, acting in $L^2$ space of the domain. Thus, it is very natural in the noncommutative case to look for $D$ as an implementation in some GNS Hilbert space of a covariant derivation in the underlying algebra, since there is no canonical analog of the d-bar operator, which is a covariant derivation, and since the choice of such $D$ would guarantee that the commutator condition of the spectral triple is satisfied. Unfortunately, the results in this paper show that spectral triples cannot be defined in such a way.  This is in contrast with classical analysis where for the Dirac operator above, subject to APS-like boundary conditions, the parametrices are compact. In noncommutative case however, no boundary conditions help. The key reason of the no-go theorem is the fundamental difference between classical differential operators and finite-difference operators with unbounded coefficients, the later appearing in quantum case. Nonetheless, since the assumptions of those constructions, while very natural, are not the most general possible, it remains an interesting challenge to construct meaningful geometric spectral triples for quantum domains in some other way.

Here we look at unbounded invariant and covariant derivations in the quantum annulus, which is the C$^*-$algebra generated by a special weighted bilateral shift.  This algebra was studied in detail in \cite{KM1} and \cite{KMRS}.  Much like the work presented in \cite{KMRSW1} we first classify such derivations and then look at their implementations in various Hilbert spaces obtained from the GNS construction with respect to an invariant state. However, even though the quantum annulus is singly generated, the classification is done differently than in \cite{KMRSW1} because the relations are more complicated and not so useful.
We then answer the question when such implementations are operators with compact parametrices and thus can be used to define spectral triples. Like the quantum disk situation, no implementation of a covariant derivation in any GNS Hilbert space for a faithful normal invariant state has compact parametrices for a large class of boundary conditions.  

Unlike the quantum disk however, the technical reasons are quite different for why this fails.  In \cite{KMRSW1}, we were able to study Fourier components of an implementation of a covariant derivation and show that their spectral properties contradict the implementation having compact parametrices.  The argument there followed similar calculations of the spectrum of the Ces{\`a}ro operator \cite{BHS}.    For the annulus the spectral argument for components is not conclusive. The coefficients, rather than polynomially bounded as in the disk case, now have to be rapidly decaying and rapidly increasing at different infinities. In one case the components of an implementation fail to have decaying norms while in the other case the implementing operator has an infinite multiplicity eigenvalue. Thus, even though the conclusions are similar in the two papers, the techniques are quite different.

Recall that if $A$ is a Banach algebra and if $\mathcal{A}$ is a dense subalgebra of $A $ then
a linear map $d:\mathcal{A}\to A $ is called a {\it derivation} if the Leibniz rule holds:
\begin{equation*}
d(ab) = ad(b) + d(a)b
\end{equation*}
for all $a,b\in\mathcal{A}$.

The following example is of the d-bar operator on the annulus, and it is the motivating example for the rest of the paper.
Let $A = C(A_r)$ be the C$^*$-algebra of continuous functions on the annulus: 
$$A_r:=\{z\in\C : r\leq z\leq 1\},$$ 
$0<r<1$.  If $\mathcal{A}$ is the algebra of polynomials in $z$ and $\bar z$ then\begin{equation*}
(da)(z) := \frac{\partial a(z)}{\partial \bar z}
\end{equation*}
is an unbounded, closable derivation in $A$.
Let $\rho_\theta: A\to A$ be the one-parameter family of automorphisms of $A$ given by the rotation $z\to e^{i\theta}z$ on the annulus. Notice that $\rho_\theta: \mathcal{A}\to\mathcal{A}$.
Moreover, $d$ is a {\it covariant} derivation in $A$ in the sense that it satisfies:
\begin{equation*}
d(\rho_\theta(a))= e^{-i\theta}\rho_\theta(d(a)),\ \ a\in\mathcal{A}.
\end{equation*}

The map $\tau: A\to \C$ given by \begin{equation*}
\tau(a)=\frac{1}{\pi(1-r^2)}\int_{A_r} a(z)d^2z,
\end{equation*}
is a $\rho_\theta$-invariant, faithful state on $A$.  The GNS Hilbert space $H_\tau$, obtained using the state $\tau$, is naturally identified with $L^2(A_r,d^2z)$, the completion of $A$ with respect to the usual inner product: 
$$\|a\|^2_\tau=\tau(a^*a)=\frac{1}{\pi(1-r^2)}\int_{A_r}|a(z)|^2d^2z.$$

The representation $\pi_\tau: A\to B(H_\tau)$ is given by multiplication: 
$$\pi_\tau(a)f(z)=a(z)f(z).$$ 

The unitary operator: 
$$U_{\tau,\theta}f(z):=f(e^{i\theta}z)$$
in $H_\tau$ implements $\rho_\theta$ in the sense that:
\begin{equation*}
U_{\tau,\theta}\pi_\tau(a)U_{\tau,\theta}^{-1} = \pi_{\tau}(\rho_\theta(a)).
\end{equation*} 
Then the covariant operator:
\begin{equation*}
(D_{\tau} f)(z) = \frac{\partial f(z)}{\partial\bar z}
\end{equation*}
on domain $\mathcal{D}_{\tau}=\mathcal{A}\subset A\subset H_\tau$ is an implementation of $d$ in $H_\tau$, i.e.
$$ [D_{\tau}, \pi_\tau(a)] = \pi_\tau(d(a)),$$
 for all $a\in\mathcal{A}$. 
 
The operator $D_{\tau}$ has an infinite dimensional kernel, so the operator $(1+D_{\tau}^*D_{\tau})^{-1/2}$ is not compact. This is because the annulus is a manifold with boundary and we need to impose elliptic-type boundary conditions to make $D_{\tau}$ elliptic, so that it has compact parametrices. 

Denote by $D^{max}_\tau$ the closure of $D_{\tau}$, hence there are no boundary conditions on $D^{max}_\tau$. On the other hand, let $D^{min}_\tau$ be the closure of $D_{\tau}$ defined on $C_0^\infty(A_r)$. The cokernel of $D^{min}_\tau$ is infinite dimensional. There exist however closed operators $D_{\tau}$ with compact parametrices, such that: 
$$D^{min}_\tau\subset D_{\tau}\subset D^{max}_\tau;$$ 
as in the disk case, examples are given by Atiyah-Patodi-Singer (APS) type boundary conditions, see \cite{BBW}.  

A more general discussion of spectral triples for manifolds with boundary using operators with APS boundary conditions is contained in \cite{BS}. Additionally, recent paper \cite{FGMR} studies generalities of spectral triples on noncommutative manifolds with boundary.

The paper is organized as follows.  In section 2 we review the quantum annulus. Section 3 contains a classification of invariant and covariant derivations in the quantum annulus. In section 4 we classify invariant states on the quantum annulus and describe the corresponding GNS Hilbert spaces and representations. Section 5 describes implementations of those derivations in the GNS Hilbert spaces of section 4. Finally in section 6 we analyze when those implementations have compact parametrices.  

\section{Quantum Annulus}

Let $\{E_k\}$ be the canonical basis for $\ell^2(\Z)$ and let $U$ be the bilateral shift, i.e. 
$$UE_k = E_{k+1}.$$  
We use the diagonal label operator: 
$$\K E_{k} = kE_{k},$$ 
so that, for a bounded function $a: \Z \to\C$, we can write: 
$$a(\K) E_{k} = a(k)E_{k}.$$  
We have the following crucial commutation relation for a diagonal operator $a(\K)$:
\begin{equation}\label{CommRel}
a(\K)\,U = Ua(\K +1).
\end{equation}

Consider the following special weighted bilateral shift:
\begin{equation*}
U_rE_k = \left\{
\begin{aligned}
rE_{k+1} & \quad k<0 \\
E_{k+1} & \quad k\ge 0,
\end{aligned}\right.
\end{equation*}
$0<r<1$.
Let $A = C^*(U_r)$, the unital C$^*-$algebra generated by $U_r$.   Reference \cite{KM1} argues, with explicit details in \cite{KMRS}, that this algebra can be thought of as a quantum annulus, see also \cite{KL3}.   Its structure is described by the following short exact sequence:
\begin{equation*}
0\longrightarrow \mathcal{K} \longrightarrow A  \longrightarrow C(S^1)\oplus C(S^1) \longrightarrow 0,
\end{equation*}
where $\mathcal{K} $ is the ideal of compact operators in $\ell^2(\Z)$.  In fact $\mathcal{K}$ is the commutator ideal of the algebra $A$.   

The generators in this algebra are a bit difficult to work with, and we prefer to think of $A$ as a crossed product. To see how this works we need the following observation.

\begin{lem}\label{U_decomp_in_U_r}\begin{equation*}
U = \frac{1+r+r^2}{(1+r)r}U_r - \frac{1}{(1+r)r}U_rU_r^*U_r
\end{equation*}
Additionally, the orthogonal projection $P_0$ onto $E_0$ can be written in terms of $U_r$ and $U_r^*$ as:
\begin{equation*}
P_0 = \frac{1}{1-r^2}\left[U_r^*,U_r\right].
\end{equation*}
\end{lem}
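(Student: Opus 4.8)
The plan is to verify both identities by a direct computation on the canonical orthonormal basis $\{E_k\}$ of $\ell^2(\Z)$. The first step is to write down the adjoint: from the definition of $U_r$ one reads off $U_r^* E_k = rE_{k-1}$ for $k\le 0$ and $U_r^* E_k = E_{k-1}$ for $k\ge 1$. With this in hand one computes the two ``number-operator-like'' products
\begin{equation*}
U_r^* U_r = r^2 P_{<0} + P_{\ge 0}, \qquad U_r U_r^* = r^2 P_{\le 0} + P_{\ge 1},
\end{equation*}
where $P_{<0},P_{\ge 0},P_{\le 0},P_{\ge 1}$ denote the evident diagonal coordinate projections onto the indicated ranges of indices.

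The projection formula is then immediate: subtracting the two displayed expressions, every term cancels except at the index $0$, since $P_{<0}-P_{\le 0}=-P_0$ and $P_{\ge 0}-P_{\ge 1}=P_0$. This gives $[U_r^*,U_r]=U_r^*U_r-U_rU_r^*=(1-r^2)P_0$, which is the second assertion.

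For the first identity I would observe that both $U_r$ and $U_rU_r^*U_r$ act as scalar multiples of the shift $E_k\mapsto E_{k+1}$: namely $U_rE_k$ has scalar $r$ for $k<0$ and $1$ for $k\ge 0$, while, using $U_rU_r^*U_r = U_r(r^2P_{<0}+P_{\ge 0})$, the operator $U_rU_r^*U_r$ acts with scalar $r^3$ for $k<0$ and $1$ for $k\ge 0$. Since the genuine bilateral shift satisfies $UE_k=E_{k+1}$ for every $k$, seeking $\alpha,\beta$ with $U=\alpha U_r-\beta U_rU_r^*U_r$ reduces to the linear system $\alpha r-\beta r^3=1$ and $\alpha-\beta=1$, whose unique solution is $\beta=\tfrac{1}{r(1+r)}$ and $\alpha=\tfrac{1+r+r^2}{r(1+r)}$ --- exactly the coefficients in the statement.

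The lemma is therefore a bookkeeping exercise rather than a substantive result, and I do not expect a genuine obstacle. The one point requiring care is the piecewise cutoff in the definition of $U_r$: one must confirm that shifting an index by $\pm 1$ never straddles the boundary between the two regimes in an unexpected way, which it does not since $k<0\Rightarrow k+1\le 0$ and $k\ge 0\Rightarrow k+1\ge 1$. Checking the transitional indices $k=-1,0,1$ explicitly makes the scalar attached to each basis vector unambiguous and completes the verification.
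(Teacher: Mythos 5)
Your proposal is correct and is exactly the computation the paper intends: the paper's proof consists of the single remark that one verifies both identities by applying each side to the basis vectors $E_k$, which is what you carry out (including the correct adjoint $U_r^*E_k$, the products $U_r^*U_r=r^2P_{<0}+P_{\ge0}$ and $U_rU_r^*=r^2P_{\le0}+P_{\ge1}$, and the linear system for the coefficients). No discrepancies.
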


\begin{proof}
This is a straightforward calculation, applying both sides of those identities to the basis elements.
\end{proof}

As a direct consequence we get the following formulas:
\begin{equation}\label{projections_in_U_r}
\begin{aligned}
&U^{-1} = U^* = \frac{1+r+r^2}{(1+r)r}U_r^* - \frac{1}{(1+r)r}U_r^*U_rU_r^* \\
&P_k = U^kP_0U^{-k} \\
&P_{\ge0} = \frac{1}{1-r^2}\left(U_r^*U_r - r^2I\right) \\
&P_{<0} = I - P_{\ge0},
\end{aligned}
\end{equation}
where $P_{\ge0}$ is the orthogonal projection onto $\textrm{span}\{E_k\}_{k\ge0}$, $P_{<0}$ is the orthogonal projection onto $\textrm{span}\{E_k\}_{k<0}$ and $P_k$ is the orthogonal projection onto $E_k$.

We call a function $a:\Z \to\C$ {\it eventually constant}, if there exists a natural number $k_0$ such that $a(k)$ are constants for $|k|\ge k_0$.  The smallest such $k_0$ is called the {\it domain constant}. The set of all such functions will be denoted by $c_{00}^\pm(\Z)$. 

Let $Pol(U_r, U_r^*)$ be the set of all polynomials in $U_r$ and $U_r^*$ and define
\begin{equation*}
\mathcal{A} = \left\{a = \sum_{n}U^n a_n(\K) \ : \ a(k)\in c_{00}^\pm(\Z),\textrm{ finite sum}\right\}.
\end{equation*}
We have the following observation.

\begin{prop}
$\mathcal{A} = Pol(U_r, U_r^*)$, the algebra of polynomials in generators $U_r$ and  $U_r^*$.
\end{prop}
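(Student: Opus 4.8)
**The plan is to prove the two inclusions $Pol(U_r,U_r^*)\subseteq\mathcal{A}$ and $\mathcal{A}\subseteq Pol(U_r,U_r^*)$ separately**, using the structural formulas already established in Lemma \ref{U_decomp_in_U_r} and in \eqref{projections_in_U_r}.

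For the inclusion $Pol(U_r,U_r^*)\subseteq\mathcal{A}$, I would first observe that $\mathcal{A}$ is a unital $*$-algebra: it is visibly closed under adjoints (since $(U^n a_n(\K))^* = U^{-n}\overline{a_n}(\K-n)$ and shifting an eventually constant function keeps it eventually constant), and it is closed under products using the commutation relation \eqref{CommRel}, $a(\K)U = U a(\K+1)$, together with the fact that $c_{00}^\pm(\Z)$ is closed under pointwise products and shifts. Hence it suffices to show $U_r\in\mathcal{A}$ and $U_r^*\in\mathcal{A}$. This is immediate by writing $U_r = U f(\K)$ where $f(k) = r$ for $k<0$ and $f(k)=1$ for $k\ge 0$; indeed $U f(\K) E_k = f(k) E_{k+1}$, which matches the definition of $U_r$. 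This $f$ lies in $c_{00}^\pm(\Z)$ with domain constant $1$, so $U_r = U f(\K)\in\mathcal{A}$, and then $U_r^*\in\mathcal{A}$ since $\mathcal{A}$ is $*$-closed.

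For the reverse inclusion $\mathcal{A}\subseteq Pol(U_r,U_r^*)$, it is enough — since $Pol(U_r,U_r^*)$ is an algebra — to show that every generator $U^n a_n(\K)$ with $a_n\in c_{00}^\pm(\Z)$ lies in $Pol(U_r,U_r^*)$. From Lemma \ref{U_decomp_in_U_r} and \eqref{projections_in_U_r} we already have $U, U^* = U^{-1}\in Pol(U_r,U_r^*)$, as well as the rank-one projections $P_k = U^k P_0 U^{-k}\in Pol(U_r,U_r^*)$ and the half-line projections $P_{\ge 0}, P_{<0}\in Pol(U_r,U_r^*)$. Now decompose $a_n = c_- P_{<0} + c_+ P_{\ge 0} + \sum_{|k|<k_0}(a_n(k)-c_{\pm})P_k$, where $c_-$, $c_+$ are the two eventual constants and for each $k$ in the finite exceptional range the coefficient picks up the appropriate eventual constant; each summand is a finite polynomial in $U_r, U_r^*$, hence so is $a_n(\K)$, and multiplying by the power $U^n\in Pol(U_r,U_r^*)$ keeps us inside the algebra. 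Summing over the finitely many $n$ finishes this direction.

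**The main obstacle** is bookkeeping rather than conceptual: one must be careful that the passage $U^n a_n(\K)$ with $a_n$ eventually constant genuinely captures all of $\mathcal{A}$ after closing under products and adjoints, i.e. that no subtler non-diagonal behavior arises — this is handled cleanly by the commutation relation \eqref{CommRel}, which lets one always push all shift factors to the left and collect a single diagonal coefficient, with eventual-constancy preserved because a finite shift of an eventually constant function is eventually constant (with domain constant increased by at most $|n|$). A secondary point to state carefully is that in the expansion of $a_n(\K)$ above, $P_{<0}$ and $P_{\ge 0}$ together with the finitely many $P_k$ span exactly the diagonal operators with eventually constant symbol, which is precisely the diagonal part of $\mathcal{A}$; this identification is what makes the two descriptions coincide.
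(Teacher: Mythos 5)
Your proposal is correct and follows essentially the same route as the paper: one inclusion via $U_r = Uf(\K)$ with $f$ eventually constant, and the other via Lemma \ref{U_decomp_in_U_r} and \eqref{projections_in_U_r} together with a decomposition of an eventually constant diagonal operator into half-line projections plus finitely many rank-one corrections. The only cosmetic difference is that the paper groups the tails as $P_{\ge L}$ and $P_{\le -L}$ rather than correcting $P_{\ge 0}$ and $P_{<0}$ on the exceptional set, which is the same computation.
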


\begin{proof}
It is clear that $\mathcal{A}$ is a $*$-subalgebra of $A$. We need to check $\mathcal{A}\subset Pol(U_r, U_r^*)$ and vice versa.  Notice that $U_r = Ua(\K)$ where $a(k) = r$ for $k<0$ and $a(k) = 1$ for $k\ge0$.  This function is obviously eventually constant so we have that $U_r\in\mathcal{A}$. Thus $Pol(U_r, U_r^*)\subset\mathcal{A}$.   

Next, by Lemma \ref{U_decomp_in_U_r} we have $U\in Pol(U_r, U_r^*)$.   Moreover we have $P_0$, $U^{-1}$, $P_k$, $P_{\ge0}$, $P_{<0}\in Pol(U_r, U_r^*)$, from equations (\ref{projections_in_U_r}).  Suppose that $a(k)\in c_{00}^\pm(\Z)$ with domain constant $L$ and set $a_- = a(k)$ for $k\le -L$ and $a_+ = a(k)$ for $k\ge L$.  We can decompose $a(\K)$ as follows:
\begin{equation}\label{diag_decomp}
a(\K) = a_-P_{\le -L} + \sum_{k=-L+1}^{L-1}a(k)P_k + a_+P_{\ge L}.
\end{equation}
Notice that we have: 
$$P_{\ge L} = P_{\ge0} - P_0 - P_1 -\ldots - P_{L-1}$$
and 
$$P_{\le -L} = P_{<0} - P_{-1} - P_{-2} - \ldots - P_{-L+1}.$$ 
It follows that $P_{\ge L}$ and $P_{\le -L}$ are both in $Pol(U_r, U_r^*)$ and consequently, by (\ref{diag_decomp}), we have $a(\K)\in Pol(U_r, U_r^*)$.  Thus it follows that $\mathcal{A}\subset Pol(U_r, U_r^*)$, completing the proof.
\end{proof}

Let $c(\Z)$ be the space of convergent sequences, and consider the abelian algebra: 
$$A_{diag} = \left\{a(\K)  : \{a(k)\}\in c(\Z) \right\}.$$   
It follows from the above proposition that this is precisely the subalgebra of all diagonal operators in $A$ and we have: 
$$A = C^*(U_r)=C^*(A_{diag}, U).$$
Additionally, because of formula (\ref{CommRel}), we can view the quantum annulus as the group crossed product of $A_{diag}$ with $\Z$ acting on $A_{diag}$  via shifts (translation by $n\in\Z$), that is: \begin{equation*}
A = A_{diag} \rtimes_{shift}\Z.
\end{equation*}
Also, we see that $(A,  A_{diag})$ is a Cartan pair \cite{R}.

\section{Derivations in the Quantum Annulus}

\subsection{Classification of derivations}

For each $\theta\in[0,2\pi)$, let $\rho_\theta : A\to A$ be an automorphism defined by $\rho_\theta(a(\K)) = a(\K)$ for a diagonal operator $a(\K)$, $\rho_\theta(U) = e^{i\theta}U$ and $\rho_\theta(U^*) = e^{-i\theta}U^*$. It is well defined on all of $A$ because it preserves the relations $U^*U=UU^*=I$ as well as (\ref{CommRel}). Alternatively, the action of $\rho_\theta$ can be written down using the label operator $\K$ as:
\begin{equation*}
\rho_\theta(a)=e^{i\theta\K}ae^{-i\theta\K}.
\end{equation*}
It follows that  $\rho_\theta : \mathcal{A}\to \mathcal{A}$.

Any derivation $d:\mathcal{A}\to A$ that satisfies the relation: 
$$\rho_\theta(d(a)) = d(\rho_{\theta}(a))$$ 
will be referred to as a $\rho_\theta$-{\it invariant} derivation. Similarly, any derivation $d:\mathcal{A}\to A$ that satisfies the relation  
$$d(\rho_\theta(a))= e^{-i\theta}\rho_\theta(d(a))$$ for all $a\in \mathcal{A}$ will be referred to as a $\rho_\theta$-{\it covariant} derivation.  

Notice that we have the identifications:
$$\mathcal{A}_{diag}:=\left\{a(\K) \ : \{a(k)\}\in c_{00}^\pm(\Z) \right\}=\{a\in\mathcal{A} : \rho_\theta(a) = a\},$$  
and similarly
$$A_{diag}= \left\{a(\K) \ : \{a(k)\}\in c(\Z) \right\}=\{a\in A : \rho_\theta(a) = a\}.$$

Additionally, we need the following sets and the identifications: 
$$\mathcal{A}_{cov} := \{a\in\mathcal{A} : \rho_\theta(a) = e^{i\theta}a\}=U\mathcal{A}_{diag},$$ and 
$$A_{cov} := \{a\in A : \rho_\theta(a) = e^{i\theta}a\}=UA_{diag}.$$

Below we use the following terminology: we say that a function $\beta:\Z \to\C$ has {\it convergent increments}, if the sequence of differences $\{\beta(k)-\beta(k-1)\}$ is convergent i.e. 
$$\{\beta(k)-\beta(k-1)\}\in c(\Z).$$  
The set of all such functions will be denoted by $c_{inc}(\Z)$. Similarly the set
of {\it eventually linear} functions is the set of $\beta:\Z \to\C$ such that: 
$$\{\beta(k)-\beta(k-1)\}\in c_{00}^\pm(\Z).$$

The following two propositions classify all invariant and covariant derivations $d:\mathcal{A}\to A$.  

\begin{prop}\label{invar_der_rep}
If $d$ is a $\rho_\theta$-invariant derivation $d:\mathcal{A}\to A$, then there exists a function $\beta\in c_{inc}(\Z)$, which is unique modulo an additive constant, such that $$d(a) = [\beta(\K), a]$$ for $a\in\mathcal{A}$.  If $d:\mathcal{A}\to \mathcal{A}$, then the corresponding
function $\beta(k)$ is eventually linear.
\end{prop}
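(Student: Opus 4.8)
The plan is to reduce the classification to understanding how a $\rho_\theta$-invariant derivation acts on the two natural pieces of $\mathcal{A}$: the diagonal subalgebra $\mathcal{A}_{diag}$ and the shift $U$. First I would observe that, because $d$ is $\rho_\theta$-invariant and $\rho_\theta$ acts by the grading $\rho_\theta(U^n a(\K)) = e^{in\theta} U^n a(\K)$, the derivation $d$ must preserve each graded component $U^n A_{diag}$; in particular $d(U) \in U A_{diag}$, i.e. $d(U) = U c(\K)$ for some $c \in c(\Z)$, and $d$ maps $\mathcal{A}_{diag}$ into $A_{diag}$. Applying $d$ to the relation $U^*U = I$ then forces $d(U^*) = -c(\K)U^* = -U^* c(\K+1)$ (using \eqref{CommRel}), so $d$ is completely determined on $Pol(U_r,U_r^*) = \mathcal{A}$ once we know its restriction to $\mathcal{A}_{diag}$ and the single sequence $c(\K)$.

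Next I would pin down $d$ on the diagonal subalgebra. Since $\mathcal{A}_{diag}$ is generated (as a $*$-algebra, after adjoining $U$) by the projections $P_k$, and $P_k = U^k P_0 U^{-k}$, it suffices to compute $d(P_0)$. From $P_0^2 = P_0$ one gets $d(P_0) = d(P_0)P_0 + P_0 d(P_0)$, which together with $\rho_\theta$-invariance (so $d(P_0)$ is diagonal) shows $d(P_0)$ is a diagonal operator annihilated by $P_0$ from both sides in the appropriate sense — a short computation with the matrix entries forces $d(P_0) = 0$. Hence $d$ vanishes on all $P_k$, and since every $a(\K) \in \mathcal{A}_{diag}$ is a finite linear combination of $P_k$'s plus multiples of $P_{\ge L}$, $P_{\le -L}$ (which are themselves built from the $P_k$ and the identity via \eqref{projections_in_U_r}), we conclude $d|_{\mathcal{A}_{diag}} = 0$. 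So the entire derivation is encoded in $c(\K)$: explicitly $d(U^n a(\K)) = U^n(\sum_{j=0}^{n-1} c(\K+j)) a(\K)$ type formulas, which one checks is exactly $[\beta(\K), \cdot]$ once we define $\beta$ by $\beta(k) - \beta(k-1) = c(k)$, i.e. $\beta(k) = \beta(0) + \sum c$. The condition $c \in c(\Z)$ is precisely $\beta \in c_{inc}(\Z)$, and $\beta$ is determined up to the additive constant $\beta(0)$; conversely any such $\beta$ gives a well-defined derivation because $[\beta(\K), \cdot]$ automatically satisfies Leibniz, is $\rho_\theta$-invariant, and maps into $A$ as $[\beta(\K), U] = U(\beta(\K+1) - \beta(\K)) = Uc(\K)$ with $c$ convergent hence bounded.

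The one genuinely delicate point is verifying that $d(U) = Uc(\K)$ really does lie in $A$, i.e. that $c(\K)$ is a bounded (indeed convergent) sequence rather than an arbitrary one — this is where the hypothesis $d:\mathcal{A}\to A$ (not merely into unbounded operators) does real work, and it is what yields $\beta \in c_{inc}(\Z)$ rather than an unrestricted $\beta$. The argument is: $c(\K) = U^* d(U)$ and $d(U) \in A$, and $A_{diag}$ is exactly the diagonal operators with convergent entries, so membership forces convergence of $\{c(k)\}$. For the last sentence of the proposition, if additionally $d:\mathcal{A}\to\mathcal{A}$ then $d(U) = Uc(\K) \in \mathcal{A}$, which by the description of $\mathcal{A}$ means $c(\K) \in \mathcal{A}_{diag}$, i.e. $\{c(k)\} \in c_{00}^\pm(\Z)$; translating back through $c(k) = \beta(k)-\beta(k-1)$ this says exactly that $\beta$ is eventually linear. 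I expect the main obstacle to be the bookkeeping in the $d(P_0) = 0$ step and in checking that the formula $d = [\beta(\K), \cdot]$ is consistent on overlaps of the two generating families — both are routine but require care with the commutation relation \eqref{CommRel}.
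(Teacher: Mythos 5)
Your proposal is correct and follows essentially the same route as the paper's proof: invariance forces $d$ to preserve the $\rho_\theta$-grading, the idempotent relation applied to diagonal projections kills $d$ on $\mathcal{A}_{diag}$, and $d(U)=Uc(\K)$ with $c\in c(\Z)$ is summed to produce $\beta\in c_{inc}(\Z)$, unique up to an additive constant, with the eventually-linear refinement when $d:\mathcal{A}\to\mathcal{A}$. (One cosmetic slip: by \eqref{CommRel} one has $c(\K)U^*=U^*c(\K-1)$, not $U^*c(\K+1)$, but this side remark does not affect the argument.)
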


\begin{proof}
Since $d$ is $\rho_\theta$-invariant we have that $d:\mathcal{A}_{diag}\to A_{diag}$, because 
$$\rho_\theta(d(a(\K)) = d(\rho_\theta(a(\K))) = d(a(\K)).$$ 
If $P\in \mathcal{A}_{diag}$ is a projection then, applying $d$ to $P^2=P$ gives:
\begin{equation*}
d(P)=d(P)P+Pd(P),
\end{equation*}
or, equivalently, using commutativity of $A_{diag}$:
\begin{equation*}
(2P-1)d(P)=0.
\end{equation*}
This implies that $d(P)=0$, and in fact  that $d(a(\K))=0$ for every $a\in \mathcal{A}_{diag}$ because $\mathcal{A}_{diag}$ is linearly generated by projections.

Next, consider the formula: 
$$\rho_\theta(d(U)) = d(\rho_\theta(U)) = e^{i\theta}d(U).$$ 
It says that $d(U)\in A_{cov}$, and thus we must have $d(U) = Ua(\K)$ for some $a(\K)\in A_{diag}$ by the above identifications.   Next we define $\beta(k)$ so we can write: 
$$a(\K) = \beta(\K+1) - \beta(\K).$$
In fact, the above formula determines $\beta(\K)$ uniquely up to a constant.  Moreover if $a(\K)\in\mathcal{A}_{diag}$, then $a(k)$ is eventually constant and so $\beta(k)$ must be eventually linear.  Then the commutation relation (\ref{CommRel}) implies that:
\begin{equation*}
d(U) = U(\beta(\K+1) - \beta(\K)) = \beta(\K)U - U\beta(\K) = [\beta(\K), U].
\end{equation*}
Using inductive reasoning we obtain: 
$$d\left(U^n\right)  =\left[\beta(\K), U^n\right],$$ 
then, using linearity and the decomposition of any $a\in\mathcal{A}$, we obtain the result.
\end{proof}

\begin{prop}\label{covar_der_rep}
If $d$ is a $\rho_\theta$-covariant derivation $d:\mathcal{A}\to A$, then there exists a unique function $\beta\in c_{inc}(\Z)$, such that  $d(a) = [U\beta(\K),a]$ for all $a\in\mathcal{A}$.  If $d:\mathcal{A}\to \mathcal{A}$ then the corresponding function $\beta(k)$ is eventually linear.
\end{prop}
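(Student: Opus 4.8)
The plan is to follow the architecture of the proof of Proposition \ref{invar_der_rep}, with one essential new ingredient: for a covariant $d$ the restriction to the diagonal subalgebra is no longer zero, but a \emph{twisted} derivation into $A_{cov}=U A_{diag}$, which must be classified first. Concretely, covariance applied to $a(\K)\in\mathcal{A}_{diag}$ gives $d(\rho_\theta(a(\K)))=e^{-i\theta}\rho_\theta(d(a(\K)))$ with $\rho_\theta(a(\K))=a(\K)$, hence $\rho_\theta(d(a(\K)))=e^{i\theta}d(a(\K))$, so $d(a(\K))\in A_{cov}=UA_{diag}$ and we may write $d(a(\K))=U\,L(a)(\K)$ with $L(a)(\K):=U^{*}d(a(\K))\in A_{diag}$. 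Using the commutation relation (\ref{CommRel}) to move $U$ to the left, the Leibniz rule for $d$ translates into the twisted identity $L(ab)(k)=a(k+1)L(b)(k)+L(a)(k)b(k)$ for the linear map $L:\mathcal{A}_{diag}\to A_{diag}$.

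Next I would evaluate $L$ on the projections spanning $\mathcal{A}_{diag}$. Feeding $P_m^2=P_m$ into the twisted identity gives $(P_m(k+1)+P_m(k)-1)L(P_m)(k)=0$, so $L(P_m)$ is supported on $\{m-1,m\}$; feeding in $P_mP_{m-1}=0$ relates the two coefficients and yields $L(P_m)=\beta(m)P_m-\beta(m-1)P_{m-1}$ for a well-defined function $\beta:\Z\to\C$ (the same value $\beta(m-1)$ arising consistently from $L(P_{m-1})$ and from $L(P_m)$). The same reasoning applied to $P_{\ge0}^2=P_{\ge0}$ and $P_{\ge0}P_{-1}=0$ gives $L(P_{\ge0})=-\beta(-1)P_{-1}$, while $L(I)=0$ (since $d(I)=0$) forces $L(P_{<0})=\beta(-1)P_{-1}$. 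Because every $a(\K)\in\mathcal{A}_{diag}$ is eventually constant and hence, as in (\ref{diag_decomp}), a finite linear combination of $P_{\ge0}$, $P_{<0}$ and the $P_m$'s, linearity of $L$ gives $L(a)(k)=\beta(k)\big(a(k)-a(k+1)\big)$ for all $a\in\mathcal{A}_{diag}$, which is exactly the statement $d(a(\K))=[U\beta(\K),a(\K)]$.

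It then remains to identify $d(U)$, $d(U^{*})$ and to see that $\beta$ has convergent increments. Covariance gives $\rho_\theta(d(U))=e^{2i\theta}d(U)$, so $U^{*2}d(U)$ is $\rho_\theta$-invariant, hence lies in $A_{diag}$, and $d(U)=U^2 g(\K)$ with $g\in c(\Z)$. Applying $d$ to $a(\K)U=Ua(\K+1)$ in two ways --- via the product rule on $a(\K)U$ and via the product rule on $Ua(\K+1)$, each time moving all $U$'s to the left with (\ref{CommRel}) and inserting the formula for $L$ --- produces $(a(k+2)-a(k+1))\big(g(k)+\beta(k)-\beta(k+1)\big)=0$ for every $a\in\mathcal{A}_{diag}$; taking $a=P_{k+1}$ gives $g(k)=\beta(k+1)-\beta(k)$. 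Thus $g\in c(\Z)$ is precisely the statement $\beta\in c_{inc}(\Z)$, and $d(U)=U^2(\beta(\K+1)-\beta(\K))=[U\beta(\K),U]$; differentiating $U^{*}U=I$ then gives $d(U^{*})=\beta(\K-1)-\beta(\K)=[U\beta(\K),U^{*}]$. Since $[U\beta(\K),\cdot]$ is a derivation $\mathcal{A}\to A$ --- the commutators $[U\beta(\K),U^n a_n(\K)]=U^{n+1}\big(\beta(\K+n)a_n(\K)-a_n(\K+1)\beta(\K)\big)$ are bounded because $\beta$ has convergent increments --- and it agrees with $d$ on $U$, $U^{*}$ and on $\mathcal{A}_{diag}$, which together generate $\mathcal{A}$, it equals $d$ on all of $\mathcal{A}$. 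Uniqueness of $\beta$ is immediate since $\beta(k)$ is recovered directly from $d$ via $d(P_k)=U(\beta(k)P_k-\beta(k-1)P_{k-1})$ (equivalently, replacing $\beta$ by $\beta+c$ changes the derivation by $c[U,\cdot]\ne0$). Finally, if $d:\mathcal{A}\to\mathcal{A}$ then $d(U)=U^2 g(\K)\in\mathcal{A}$ forces $g\in c_{00}^\pm(\Z)$, i.e.\ $\beta$ is eventually linear.

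I expect the main obstacle to be the projection bookkeeping in the second paragraph: one must check that the relations $P_m^2=P_m$, $P_mP_{m-1}=0$, $P_{\ge0}^2=P_{\ge0}$, $P_{\ge0}P_{-1}=0$ and $L(I)=0$ are exactly enough to fix all coefficients of $L$ consistently, and that the eventually-constant structure of $\mathcal{A}_{diag}$ really does reduce everything to finitely many projections with no convergence issue hidden in $P_{\ge0}$ and $P_{<0}$. The two-way computation on $a(\K)U$ is then routine once (\ref{CommRel}) is used carefully.
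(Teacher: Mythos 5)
Your proposal is correct and follows essentially the same route as the paper: the paper also passes to the invariant map $\tilde d=U^{-1}d$ on the diagonal subalgebra, derives the same twisted Leibniz rule, reads off $\beta$ from projections (it uses $P_{\ge k}$ where you use $P_m$ together with $P_mP_{m-1}=0$, a cosmetic difference), and then applies $d$ to the relation $a(\K)U=Ua(\K+1)$ to identify $d(U)=U^2(\beta(\K+1)-\beta(\K))=[U\beta(\K),U]$ and conclude $\beta\in c_{inc}(\Z)$. The only organizational difference is that the paper starts from $d(U^*)\in A_{diag}$ rather than $d(U)\in U^2A_{diag}$, which changes nothing of substance.
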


\begin{proof}
Consider the following equation:
\begin{equation*}
\rho_\theta(d(U^*)) = e^{i\theta}d(\rho_\theta(U^*)) = e^{i\theta}e^{-i\theta}d(U^*) = d(U^*),
\end{equation*}
which says that $d(U^*)\in A_{diag}$. Consequently, this implies that $d(U^*) = \alpha(\K)$
for some $\alpha(\K)\in A_{diag}$, which then gives:
\begin{equation}\label{donU}
d(U)=-U^2\alpha(\K+1).
\end{equation}
 
Next we define a linear map $\tilde{d}:\mathcal{A}\to A$ via the formula:
$$\tilde{d}(a) := U^{-1}d(a).$$  
Notice that we have\begin{equation*}
\rho_\theta(\tilde{d}(a)) = \rho_\theta(U^{-1})\rho_\theta(d(a)) = e^{-i\theta}U^{-1}e^{i\theta}d(\rho_\theta(a)) = \tilde{d}(\rho_\theta(a)),
\end{equation*}
which shows that $\tilde{d}$ is $\rho_\theta$-invariant.
In particular this implies that $\tilde{d}:\mathcal{A}_{diag}\to A_{diag}$. Additionally, $\tilde{d}$ satisfies a twisted Leibniz rule of the form:
\begin{equation*}
\tilde{d}(a(\K)b(\K)) = \tilde{d}(a(\K))b(\K) + a(\K+1)\tilde{d}(b(\K)).
\end{equation*}

We claim that there is a diagonal operator $\beta(\K)$ such that: 
\begin{equation}\label{ddiag}
\tilde{d}(a(\K)) = \beta(\K)(a(\K) - a(\K+1)).
\end{equation}
To construct $\beta(k)$ we apply $\tilde{d}$ to both sides of the equality $P_{\ge k}^2 = P_{\ge k}$ and obtain: 
\begin{equation*}
\tilde{d}(P_{\ge k}) = \tilde{d}(P_{\ge k})P_{\ge k} + P_{\ge k-1}\tilde{d}(P_{\ge k}),
\end{equation*}
or, equivalently,\begin{equation*}
P_{\le k-1}\tilde{d}(P_{\ge k}) = P_{\ge k-1}\tilde{d}(P_{\ge k}).
\end{equation*}
This implies that $\tilde{d}(P_{\ge k})(l)$ is zero unless $l=k-1$. Define the sequence $\beta(k)$ by:
$$\beta(k-1)=-\tilde{d}(P_{\ge k})(k-1).$$ 
As a consequence we have:
\begin{equation*}
\tilde{d}(P_{\ge k})(\K)=-\beta(\K)P_{k-1}(\K)=\beta(\K)(P_{\ge k}(\K)-P_{\ge k-1}(\K))=\beta(\K)(P_{\ge k}(\K)-P_{\ge k}(\K+1)).
\end{equation*}
Using the decomposition of $a(\K)$ as a sum of orthogonal projections $P_{\ge k}$ we see that (\ref{ddiag}) holds for any $a(\K)\in A_{diag}$.  
Also, using the relation $\tilde{d}(a) = U^{-1}d(a)$, we get
\begin{equation*}
d(a(\K)) = U\tilde{d}(a(\K)) = U\beta(\K)(a(\K) - a(\K+1)) = [U\beta(\K), a(\K)].
\end{equation*}

Now we need to relate $\alpha$ and $\beta$. Applying $d$ to the commutation relation (\ref{CommRel}) and using (\ref{donU}) and  (\ref{ddiag}) yields:
\begin{equation*}
[U\beta(\K), a(\K)]U - a(\K)U^2\alpha(\K+1) = - U^2\alpha(\K+1)a(\K+1) + U[U\beta(\K), a(\K+1)],
\end{equation*}
implying that for all $a\in\mathcal{A}_{diag}$ we have:
\begin{equation*}
(\beta(\K+1) - \beta(\K) + \alpha(\K+1))(a(\K+2) - a(\K+1)) = 0 .
\end{equation*}
We conclude that $\alpha(\K) = \beta(\K-1) - \beta(\K)$, thus \begin{equation*}
d(U)=-U^2\alpha(\K+1)=[U\beta(\K),U].
\end{equation*}
Since $\alpha\in c(\Z)$ and $\alpha(k) = \beta(k-1) - \beta(k)$ we must have $\beta\in c_{inc}(\Z)$.   The result follows from the standard inductive reasoning in powers of $U$ and linearity.
\end{proof}

\subsection{Structure of derivations}

In \cite{KMRSW1} we studied when invariant derivations in the quantum disk are approximately bounded/approximately inner.  This was motivated by the questions arising in \cite{BEJ}, \cite{H}, and \cite{J}. We continue this discussion for the quantum annulus.  

Recall that $d$ is called {\it approximately inner} if there are $a_n\in A$  such that 
$$d(a) = \lim_{n\to\infty}[a_n,a]$$ 
for $a\in\mathcal{A}$.  If  $d(a) = \lim_{n\to\infty} d_n(a)$ for bounded derivations $d_n$  in $A$ then $d$ is called {\it approximately bounded}. Note also that any bounded derivation $d$ on $A$ can be written as a commutator $d(a)=[a,x]$ with $x$ in a weak closure of $A$, see \cite{KR}, \cite{S}. Recall also, from the proof of Proposition \ref{invar_der_rep}, that if $d:\mathcal{A}\to A$ is an invariant derivation then 
$$d(a(\K))=0$$ 
for every $a(\K)\in A_{diag}$ and so $d$ is completely determined by its value on $U$.

\begin{lem}
Let $d$ be a $\rho_\theta$-invariant derivation in $A$ with domain $\mathcal{A}$.  If $d$ is approximately bounded then there exists a sequence $\{\mu_n(k)\}\in\ell^\infty(\Z)$ such that 
\begin{equation*}
d(a) = \lim_{n\to\infty}[\mu_n(\K),a]
\end{equation*}
for all $a\in\mathcal{A}$, i.e. $d$ can be approximated by bounded $\rho_\theta$-invariant derivations.
\end{lem}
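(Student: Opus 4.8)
The plan is to start from the given data: $d$ is a $\rho_\theta$-invariant, approximately bounded derivation, so there are bounded derivations $d_n$ in $A$ with $d(a)=\lim_{n\to\infty}d_n(a)$ for $a\in\mathcal{A}$. By the remark preceding the statement, each bounded $d_n$ is inner in the weak closure, i.e.\ $d_n(a)=[x_n,a]$ for some $x_n$ in the von Neumann algebra generated by $A$; and by Proposition \ref{invar_der_rep}, $d$ itself has the form $d(a)=[\beta(\K),a]$ for some $\beta\in c_{inc}(\Z)$. The first step is to \emph{average over the circle action} to replace $d_n$ by a $\rho_\theta$-invariant bounded derivation without changing the limit. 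Concretely I would set
\begin{equation*}
\tilde d_n(a):=\frac{1}{2\pi}\int_0^{2\pi}\rho_{-\theta}\bigl(d_n(\rho_\theta(a))\bigr)\,d\theta.
\end{equation*}
One checks that $\tilde d_n$ is again a derivation (the integrand is, for each fixed $\theta$, a derivation since $\rho_\theta$ and $\rho_{-\theta}$ are automorphisms), that $\|\tilde d_n\|\le\|d_n\|$ so it is bounded, that it is $\rho_\theta$-invariant by construction, and — using the $\rho_\theta$-invariance of $d$ itself — that $\tilde d_n(a)\to d(a)$ for $a\in\mathcal{A}$. This reduces us to the case where each $d_n$ is a bounded $\rho_\theta$-invariant derivation.

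The second step is to identify what a bounded $\rho_\theta$-invariant derivation looks like. Running the argument of Proposition \ref{invar_der_rep} at the bounded level: invariance forces $d_n$ to vanish on all diagonal projections, hence on $A_{diag}$, and $d_n(U)\in A_{cov}$, so $d_n(U)=U\,c_n(\K)$ for some bounded diagonal $c_n(\K)$; writing $c_n(k)=\mu_n(k+1)-\mu_n(k)$ we get $d_n(U)=[\mu_n(\K),U]$. The point is that for $d_n$ to extend to a \emph{bounded} operator on all of $A$, the sequence $c_n(k)$ must be bounded, and moreover the increments being bounded is not enough by itself; one must check that $\mu_n$ can be chosen bounded. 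Here is where I would use that the commutator ideal is $\mathcal{K}$ and that $C(S^1)\oplus C(S^1)$ is commutative: in the quotient $d_n$ descends to a derivation of $C(S^1)\oplus C(S^1)$, and bounded derivations of abelian C$^*$-algebras vanish, so $d_n(U)\in\mathcal{K}$. Thus $c_n(k)\to 0$ as $|k|\to\infty$, i.e.\ $c_n\in c_0(\Z)$, which lets us solve $c_n(k)=\mu_n(k+1)-\mu_n(k)$ with $\mu_n\in\ell^\infty(\Z)$ (choose $\mu_n(k)=-\sum_{j\ge k}c_n(j)$ for $k$ large, $\mu_n(k)=\sum_{j<k}c_n(j)$ for $k$ small, matched in the middle; the tails are absolutely summable-free but bounded since $c_n\in c_0$ gives convergent, hence bounded, partial sums — more carefully, one uses that $c_n(k)$ decays so the two one-sided sums are each bounded). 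Then $d_n(a)=[\mu_n(\K),a]$ for all $a\in\mathcal{A}$ by the usual induction on powers of $U$ and linearity, exactly as in Proposition \ref{invar_der_rep}.

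The third step is cleanup: we now have $d(a)=\lim_n[\mu_n(\K),a]$ with $\mu_n\in\ell^\infty(\Z)$, which is precisely the claimed conclusion, and the derivations $a\mapsto[\mu_n(\K),a]$ are bounded (since $\mu_n(\K)$ is a bounded operator) and $\rho_\theta$-invariant (since $\mu_n(\K)$ is diagonal), giving the parenthetical remark. The main obstacle I anticipate is the second step — justifying that a bounded invariant derivation has $d_n(U)$ with coefficient sequence in $c_0(\Z)$ and then solving the difference equation within $\ell^\infty(\Z)$. The subtlety is that boundedness of $d_n$ as a map does not a priori bound $\mu_n$, only its increments; the resolution goes through the short exact sequence $0\to\mathcal K\to A\to C(S^1)\oplus C(S^1)\to 0$ and the fact that a bounded derivation of a commutative C$^*$-algebra is zero, forcing $d_n(U)$ into the compacts. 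One should double-check that the passage to the quotient is legitimate for a derivation defined only on the dense subalgebra $\mathcal A$ — but since $d_n$ is assumed bounded it extends to all of $A$, so this is fine. A secondary minor point is confirming that the averaging in step one genuinely preserves the pointwise limit on $\mathcal A$, which uses that $\rho_\theta$ is isometric and that $\rho_\theta(\mathcal A)=\mathcal A$ so the integrand converges to $\rho_{-\theta}(d(\rho_\theta(a)))=d(a)$ uniformly in $\theta$, allowing the interchange of limit and integral.
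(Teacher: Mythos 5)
Your overall strategy diverges from the paper's: the paper does not average the derivations $d_n$ and then classify bounded invariant derivations; it invokes the remark that each bounded $d_n$ is inner via some $b_n$ in the weak closure, averages the \emph{implementing operators} to get $(b_n)_{av}=\mu_n(\K)$ (bounded and diagonal, hence $\{\mu_n(k)\}\in\ell^\infty(\Z)$ for free), and then only has to check convergence of $[(b_n)_{av},a]$ on the generator $U$. Your first step (averaging $\rho_{-\theta}\circ d_n\circ\rho_\theta$ over $\theta$) is correct and well justified, but it pushes all the work into classifying bounded invariant derivations, and that is where your argument breaks.

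The gap is in your second step, at the point where you solve the difference equation $c_n(k)=\mu_n(k+1)-\mu_n(k)$ inside $\ell^\infty(\Z)$. You correctly reduce to $c_n\in c_0(\Z)$ via the quotient $A/\mathcal K\cong C(S^1)\oplus C(S^1)$ (bounded derivations preserve closed ideals and vanish on commutative C$^*$-algebras, so $d_n(U)\in\mathcal K$). But your justification that this suffices --- ``$c_n\in c_0$ gives convergent, hence bounded, partial sums'' --- is false: $c_n(k)=1/(|k|+1)$ lies in $c_0(\Z)$ yet its partial sums grow like $\log|k|$, so the primitive $\mu_n$ is unbounded and the tail sums $\sum_{j\ge k}c_n(j)$ you propose do not even converge. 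Decay of the increments to zero is strictly weaker than boundedness of the primitive, which is exactly the subtlety you flagged but did not actually overcome. The repair is either to follow the paper (average the Sakai implementing element $b_n$, whose average is automatically a bounded diagonal operator), or to extract the bound directly from the norm of the derivation: since $d_n(U^m)=U^m\bigl(\sum_{j=\K}^{\K+m-1}c_n(j)\bigr)$, boundedness of $d_n$ gives $\sup_k\bigl|\sum_{j=k}^{k+m-1}c_n(j)\bigr|\le\|d_n\|$ uniformly in $m$, and this uniform bound on partial sums is what lets you choose $\mu_n\in\ell^\infty(\Z)$. As written, without one of these inputs, the step does not go through.
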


\begin{proof}
Given an element $a\in A$ we define its $\rho_\theta$-average $a_{av} \in A$ by:
\begin{equation*}
a_{av} := \frac{1}{2\pi}\int_0^{2\pi} \rho_\theta(a)\ d\theta .
\end{equation*}
It follows that $a_{av}$ is $\rho_\theta$-invariant since the Lebesgue measure $d\theta$ is translation invariant.  Additionally, all $\rho_\theta$-invariant operators in $\ell^2(\Z)$ are diagonal with respect to the basis $\{E_k\}$ so that $a_{av}\in A_{diag}$.

Since by assumption $d$ is approximately bounded, there exists a sequence of bounded operators $b_n$ such that: 
$$d(a) = \lim_{n\to\infty}[b_n,a]$$ 
for all $a\in\mathcal{A}$.  It suffices to show that: 
\begin{equation}\label{diag_conv}
\lim_{n\to\infty}[(b_n)_{av},a] = d(a),
\end{equation}
since $(b_n)_{av}$ is $\rho_\theta$-invariant for every $\theta$ and hence by Proposition \ref{invar_der_rep} it is given by the commutator with a diagonal operator $\mu_n(\K)$ such that $\{\mu_n(k)\}\in\ell^\infty(\Z)$ because of the assumption of boundedness.   

By the remark before the statement of the lemma it is enough to verify (\ref{diag_conv}) on the generator $U$. This computation is essentially identical to the similar one in the corresponding proof for the quantum disk in \cite{KMRSW1}.   This completes the proof.
\end{proof}

The following results answer the question when is a $\rho_\theta$-invariant derivation in $d:\mathcal{A}\to A$ approximately inner/bounded. They are stated here without a proof because the only verifications needed are for the action of the derivations on  $U$, and those are exactly the same as in the proofs in \cite{KMRSW1}.

\begin{prop}
Let $d(a) = [\beta(\K), a]$ be a $\rho_\theta$-invariant derivation in $A$ with domain $\mathcal{A}$.  If $d$ is approximately bounded then $\{\beta(k)-\beta(k-1)\}\in c_0(\Z)$, the space of sequences convergent to zero.
\end{prop}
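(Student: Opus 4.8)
The plan is to combine the preceding lemma with a Ces\`aro--averaging argument whose underlying point is that the increments of a bounded sequence cannot have a nonzero average at infinity. By the previous lemma, approximate boundedness of the invariant derivation $d$ can be upgraded to approximation by \emph{invariant} bounded derivations: there is a sequence $\{\mu_n(k)\}\in\ell^\infty(\Z)$ with $d(a)=\lim_{n\to\infty}[\mu_n(\K),a]$ in the $C^*$-norm for every $a\in\mathcal{A}$. Since an invariant derivation is determined by its value on $U$, I would specialize to $a=U$: using the commutation relation (\ref{CommRel}) one has $[\mu_n(\K),U]=U\bigl(\mu_n(\K+1)-\mu_n(\K)\bigr)$ and $d(U)=[\beta(\K),U]=U\bigl(\beta(\K+1)-\beta(\K)\bigr)$, so, $U$ being unitary, the norm convergence $[\mu_n(\K),U]\to d(U)$ is precisely the statement that
$$\delta_n:=\sup_{k\in\Z}\bigl|(\mu_n(k+1)-\mu_n(k))-(\beta(k+1)-\beta(k))\bigr|\longrightarrow 0 .$$

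Next I would set $a(k):=\beta(k+1)-\beta(k)$, which lies in $c(\Z)$ by Proposition \ref{invar_der_rep}, and denote by $c_+$, $c_-$ its limits at $+\infty$ and $-\infty$; the claim is exactly that $c_+=c_-=0$. Telescoping gives $\sum_{k=0}^{N-1}(\mu_n(k+1)-\mu_n(k))=\mu_n(N)-\mu_n(0)$, which is bounded in absolute value by $2\|\mu_n\|_\infty$, so together with the definition of $\delta_n$ we obtain $\bigl|\tfrac1N\sum_{k=0}^{N-1}a(k)\bigr|\le \tfrac{2\|\mu_n\|_\infty}{N}+\delta_n$. Letting $N\to\infty$ with $n$ fixed, the Ces\`aro averages of $a(k)$ over $0\le k\le N-1$ converge to $c_+$, hence $|c_+|\le\delta_n$; since this holds for all $n$ and $\delta_n\to 0$, we conclude $c_+=0$. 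The identical estimate on the block $k=-N,\dots,-1$ gives $c_-=0$. Therefore $a(k)\to0$ at both infinities, and since $\{\beta(k)-\beta(k-1)\}$ is just an index shift of $\{a(k)\}$, it belongs to $c_0(\Z)$, as claimed.

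The only step needing care is the order of the two limiting processes: for a fixed $n$ one must keep the bound $\tfrac{2\|\mu_n\|_\infty}{N}+\delta_n$ in force while sending $N\to\infty$ --- the boundedness of each individual $\mu_n$, guaranteed by the previous lemma, is exactly what makes the $1/N$ term disappear irrespective of how large $\|\mu_n\|_\infty$ may be --- and only afterwards let $n\to\infty$ to use $\delta_n\to 0$. Everything else is routine; the computation reducing matters to the generator $U$ is the same as the corresponding one for the quantum disk in \cite{KMRSW1}.
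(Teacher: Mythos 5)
Your proof is correct and follows exactly the route the paper intends: the paper omits the argument, noting only that one reduces to the action on the generator $U$ and then repeats the computation from the quantum disk paper, and your reduction via the preceding lemma to invariant bounded approximants $\mu_n(\K)$, followed by the telescoping/Ces\`aro estimate $\bigl|\tfrac1N\sum_{k=0}^{N-1}a(k)\bigr|\le \tfrac{2\|\mu_n\|_\infty}{N}+\delta_n$ applied separately at $+\infty$ and $-\infty$, is precisely that computation adapted to $\Z$. The order of limits is handled correctly, so there is nothing to add.
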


We also have the following converse result.

\begin{prop}
If $d(a) = [\beta(\K), a]$ is a $\rho_\theta$-invariant derivation in $A$ with domain $\mathcal{A}$ such that $\{\beta(k) - \beta(k-1)\}\in c_0(\Z)$, then $d$ is approximately inner.
\end{prop}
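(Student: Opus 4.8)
The plan is to construct explicit bounded $\rho_\theta$-invariant derivations $d_n$ that converge to $d$ on $\mathcal{A}$, and then recognize each $d_n$ as inner (indeed given by a commutator with a bounded diagonal operator), which immediately gives approximate innerness. Since, by Proposition \ref{invar_der_rep} and the remark preceding the lemma, an invariant derivation is completely determined by its value on $U$, namely $d(U) = U(\beta(\K+1)-\beta(\K))$, it suffices to approximate the diagonal coefficient $a(\K) := \beta(\K+1)-\beta(\K)$ by diagonal operators of the form $\beta_n(\K+1)-\beta_n(\K)$ with $\beta_n$ bounded, in such a way that the resulting commutators converge on all of $\mathcal{A}$.

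First I would exploit the hypothesis $\{\beta(k)-\beta(k-1)\} = \{a(k-1)\} \in c_0(\Z)$. The natural candidate is a truncation: fix a reference point, say set $\beta_n(k) = \beta(k)$ for $|k|\le n$ and extend $\beta_n$ to be \emph{constant} for $|k| > n$ (equal to $\beta(n)$ for $k>n$ and to $\beta(-n)$ for $k<-n$). Then $\beta_n$ is eventually constant, hence bounded, so $\mu_n(\K) := \beta_n(\K)$ is a bounded diagonal operator, and $d_n(a) := [\beta_n(\K), a]$ is a bounded $\rho_\theta$-invariant derivation which is manifestly inner. The increment sequence of $\beta_n$ agrees with that of $\beta$ on the window $|k|\le n$ and is zero outside, so $\|\beta_n(\K+1)-\beta_n(\K) - (\beta(\K+1)-\beta(\K))\|_{\ell^\infty} = \sup_{|k|>n}|a(k)| \to 0$ as $n\to\infty$, using precisely that $a\in c_0(\Z)$.

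Next I would check convergence $d_n(a)\to d(a)$ for every $a\in\mathcal{A}$. By linearity and the fact that both sides vanish on $A_{diag}$, it is enough to verify this on monomials $U^m c(\K)$; by the Leibniz rule and the commutation relation (\ref{CommRel}) one reduces further to the generator $U$, where $d_n(U) - d(U) = U\big((\beta_n(\K+1)-\beta_n(\K)) - (\beta(\K+1)-\beta(\K))\big)$, whose operator norm is exactly $\sup_{|k|>n}|a(k)|$ and tends to $0$. For a general monomial one gets a finite telescoping sum of such terms, each controlled by the same supremum, so convergence in operator norm holds on all of $\mathcal{A}$. This is the only genuine computation, and it is routine; the lemma's proof already notes this reduction is identical to the quantum disk case in \cite{KMRSW1}.

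The main obstacle, such as it is, is bookkeeping rather than conceptual: one must make sure the truncated $\beta_n$ really does stay in $c_{inc}(\Z)$ (it does, trivially, being eventually constant) and that the approximating commutators are with \emph{bounded} operators so that the $d_n$ are honest bounded derivations and the limit is literally of the form $\lim_n[a_n, a]$ with $a_n = \beta_n(\K)\in A$, giving approximate innerness on the nose and not merely approximate boundedness. A secondary point worth a sentence is that the conclusion is the natural converse to the previous proposition, so together they characterize approximate innerness (equivalently here, approximate boundedness) of invariant derivations by the condition $\{\beta(k)-\beta(k-1)\}\in c_0(\Z)$.
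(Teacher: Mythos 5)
Your proof is correct and is essentially the standard truncation argument that the paper itself invokes (by deferring to the analogous proof in the quantum disk paper): truncate $\beta$ to an eventually constant, hence bounded, $\beta_n(\K)\in A_{diag}$, note that the increments differ from those of $\beta$ only for $|k|\gtrsim n$ where they are controlled by $\sup_{|k|\ge n}|\beta(k+1)-\beta(k)|\to 0$, and propagate norm convergence from $U$ to all of $\mathcal{A}$ by telescoping. The bookkeeping points you flag (that $\beta_n$ is genuinely bounded so each $[\beta_n(\K),\cdot]$ is inner, not merely bounded) are exactly the right ones and are handled correctly.
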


\section{Invariant States}

\subsection{Classification of states}

Next we describe all the invariant states on $A$.  If $\tau : A\to\C$ is a state, then $\tau$ is called a $\rho_\theta$-invariant state on $A$ if it satisfies: 
$$\tau(\rho_\theta(a)) = \tau(a).$$ 
The results in this section closely parallel similar analysis in \cite{KMRSW1}.

Since $A = A_{diag} \rtimes_{shift}\Z$, there is a natural expectation $E : A\to A_{diag}$, i.e. $E$ is positive, unital and idempotent linear map
defined, for $a\in\mathcal{A}$, by the formula:
\begin{equation*}
E(a) = E\left(\sum_{n}U^n a_n(\K)\right) = a_0(\K)
\end{equation*}
and $a_0(\K)\in A_{diag}$.      Since $A_{diag}$ is the fixed point algebra for $\rho_\theta$, we immediately obtain the following lemma:

\begin{lem}
Suppose $\tau : A\to\C$ is a $\rho_\theta$-invariant state on $A$, then there exists a state $t : A_{diag}\to\C$ such that $\tau(a) = t(E(a))$ where $E$ is the natural expectation.   Conversely given a state $t : A_{diag}\to\C$, the formula $\tau(a) = t(E(a))$ defines a $\rho_\theta$-invariant state $\tau$ on $A$.
\end{lem}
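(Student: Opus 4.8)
The plan is to prove the converse direction first, as it is the more routine of the two. Given a state $t:A_{diag}\to\C$, I would define $\tau:=t\circ E$ and check the three defining properties of a $\rho_\theta$-invariant state. Linearity and unitality are immediate from linearity of $E$ and $E(I)=I$. For positivity and invariance I would use the identification of $E$ with the $\rho_\theta$-averaging operator $a\mapsto a_{av}=\frac{1}{2\pi}\int_0^{2\pi}\rho_\theta(a)\,d\theta$ introduced in the proof of the earlier lemma on approximately bounded derivations: on $\mathcal{A}$ one has $\rho_\theta\big(\sum_n U^na_n(\K)\big)=\sum_n e^{in\theta}U^na_n(\K)$, so integrating over $\theta$ kills every Fourier mode except $n=0$ and returns $a_0(\K)=E(a)$; since averaging is contractive and $\mathcal{A}=Pol(U_r,U_r^*)$ is dense in $A$, this shows that $E$ extends to all of $A$ with $E(a)=a_{av}$ there. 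Then $E(a^*a)=\frac{1}{2\pi}\int_0^{2\pi}\rho_\theta(a)^*\rho_\theta(a)\,d\theta$ is an average of positive operators, hence positive, so $\tau(a^*a)=t(E(a^*a))\ge 0$; and translation invariance of $d\theta$ on the circle gives $E(\rho_\phi(a))=E(a)$, hence $\tau(\rho_\phi(a))=\tau(a)$.

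For the forward implication, given a $\rho_\theta$-invariant state $\tau$ I would set $t:=\tau|_{A_{diag}}$, which is visibly a state on $A_{diag}$, and reduce the claim to the identity $\tau=\tau\circ E$ on all of $A$. This I would obtain directly from the averaging description of $E$ together with $\rho_\theta$-invariance and continuity of $\tau$:
\begin{equation*}
\tau(a)=\frac{1}{2\pi}\int_0^{2\pi}\tau(\rho_\theta(a))\,d\theta=\tau\!\left(\frac{1}{2\pi}\int_0^{2\pi}\rho_\theta(a)\,d\theta\right)=\tau(a_{av})=\tau(E(a))=t(E(a)).
\end{equation*}
Alternatively, by continuity and linearity one may reduce to monomials $a=U^na_n(\K)$ and check that $\tau$ annihilates them for $n\ne 0$; the averaging argument handles this uniformly.

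I do not expect a genuine obstacle here; the substance is the averaging trick, which turns $\rho_\theta$-invariance of a functional into factorization through the fixed-point algebra $A_{diag}$. The two points that merely require a line of justification are (i) that $E$, defined a priori only on the dense subalgebra $\mathcal{A}$, extends to a contractive — hence positive — map on all of $A$, which is precisely the identification $E=(\,\cdot\,)_{av}$, and (ii) the interchange of the bounded functional $\tau$ with the integral over $\theta$, legitimate since that integral is a norm limit of Riemann sums.
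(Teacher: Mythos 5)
Your proof is correct and is exactly the argument the paper has in mind: the paper simply asserts the lemma follows ``immediately'' from $A_{diag}$ being the fixed-point algebra of $\rho_\theta$, and the standard fact behind that assertion is precisely your averaging identification $E=(\,\cdot\,)_{av}$ together with the interchange of $\tau$ with the $\theta$-integral. You have just written out in full what the paper leaves implicit.
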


To parametrize all invariant states we need to first identify the pure states.
\begin{lem}\label{pure_lem}
The pure states on $A_{diag}$, denoted by $t_k$ for $k\in\N$ and $t_{\pm\infty}$, are given by:
\begin{equation*}
\begin{aligned}
&t_k(a(\K)) = a(k) = \langle E_k, aE_k\rangle ,\\
&t_{\pm\infty}(a(\K)) = \lim_{k\to\pm\infty} a(k) = \lim_{k\to\pm\infty} t_k(a(\K)).
\end{aligned}
\end{equation*}
\end{lem}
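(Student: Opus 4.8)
The plan is to identify the pure states on $A_{diag}$ via its Gelfand spectrum, since $A_{diag}$ is a commutative unital C$^*$-algebra. First I would observe that $A_{diag}$ is $*$-isomorphic to $c(\Z)$, the algebra of convergent sequences, via $a(\K)\mapsto\{a(k)\}$. Thus the pure states of $A_{diag}$ correspond bijectively to the characters of $c(\Z)$, i.e. to the points of its maximal ideal space. The maximal ideal space of $c(\Z)$ is the one-point compactification-like object $\Z\cup\{+\infty,-\infty\}$: indeed, $c(\Z)$ is generated by $c_0(\Z)$ together with the two sequences $\mathbf{1}_{k\ge 0}$ and $\mathbf{1}_{k<0}$ (equivalently, a sequence is convergent iff it has separate limits at $+\infty$ and $-\infty$ and those limits coincide for sequences in $c_0$... more precisely, every convergent sequence has a single two-sided limit), so its spectrum consists of the evaluation maps at each $k\in\Z$ plus the single limit functional. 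Wait — here one must be careful: $c(\Z)$ as used in the paper means sequences with a single limit as $|k|\to\infty$, so its spectrum is $\Z\cup\{\infty\}$ with one point at infinity; but the statement lists $t_{+\infty}$ and $t_{-\infty}$ separately. So actually the relevant algebra for which these are the pure states must be the one where the two limits can differ. Let me reconsider: the lemma is about $A_{diag}=\{a(\K):\{a(k)\}\in c(\Z)\}$ as defined, so in fact $t_{+\infty}=t_{-\infty}$ on this algebra and the lemma is listing them as a single functional written in two ways. I would state this carefully.

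The key steps, in order, are: (1) Recall that $A_{diag}\cong c(\Z)$ as commutative unital C$^*$-algebras, so by Gelfand duality $A_{diag}\cong C(X)$ where $X$ is the maximal ideal space, and the pure states are exactly the point evaluations $\mathrm{ev}_x$, $x\in X$. (2) Identify $X$: a multiplicative linear functional $\chi$ on $c(\Z)$ is determined by its values on the generators. For each $k\in\Z$, $\mathrm{ev}_k$ given by $\{a(k)\}\mapsto a(k)$ is such a functional, and these are distinct. (3) Show there is exactly one more: if $\chi$ does not vanish on all of $c_0(\Z)$ then $\chi$ agrees with some $\mathrm{ev}_k$ on $c_0(\Z)$ (standard: the pure states of $c_0(\Z)$ are the $\mathrm{ev}_k$), hence $\chi=\mathrm{ev}_k$ on all of $c(\Z)$ by a density/multiplicativity argument; if $\chi$ vanishes on $c_0(\Z)$, then since every $a\in c(\Z)$ decomposes as $a = (\lim a)\mathbf{1} + (a-(\lim a)\mathbf{1})$ with the second term in $c_0(\Z)$, we get $\chi(a)=\lim_{|k|\to\infty}a(k)$, which is the functional denoted $t_{\pm\infty}$ (and equals $\lim_{k\to+\infty}$ and $\lim_{k\to-\infty}$ alike). (4) Verify these are all genuinely states (positive, norm one) and pure (extreme points, automatic for characters of a commutative C$^*$-algebra), and translate back via $\mathrm{ev}_k(\{a(l)\})=a(k)=\langle E_k,a(\K)E_k\rangle$.

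The main obstacle — really the only subtle point — is making precise the claim that the only character of $c(\Z)$ not among the $\mathrm{ev}_k$ is the limit functional, and reconciling this with the notation $t_{+\infty}, t_{-\infty}$ in the statement. Since the paper's $c(\Z)$ consists of genuinely convergent (two-sided limit) sequences, $t_{+\infty}$ and $t_{-\infty}$ denote the \emph{same} functional on $A_{diag}$, just computed by taking the limit along either tail; the formula $\lim_{k\to\pm\infty}a(k)=\lim_{k\to\pm\infty}t_k(a(\K))$ simply records that $t_{\pm\infty}$ is the weak-$*$ limit of the $t_k$. I would phrase the proof so this is transparent: the weak-$*$ limits of $t_k$ as $k\to+\infty$ and as $k\to-\infty$ exist (because every element is a convergent sequence) and coincide, giving one extra pure state beyond the discrete family $\{t_k\}$, and a routine check shows there are no others. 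Everything else is a direct unwinding of Gelfand duality and requires no real computation.
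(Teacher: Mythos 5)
Your overall strategy --- Gelfand duality for the commutative unital C$^*$-algebra $A_{diag}$, identifying pure states with characters and characters with points of the maximal ideal space --- is exactly the paper's approach (its proof is a one-line appeal to this). But you talk yourself into the wrong spectrum. In this paper $c(\Z)$ must be read as the bi-infinite sequences admitting (possibly different) limits at $+\infty$ and at $-\infty$: the paper identifies $A_{diag}$ with the set of \emph{all} diagonal operators in $A$, and $A$ contains the projection $P_{\ge 0}$ (see (\ref{projections_in_U_r})), whose diagonal sequence $\mathbf{1}_{k\ge 0}$ has limit $1$ at $+\infty$ and $0$ at $-\infty$; likewise $\mathcal{A}_{diag}$ consists of eventually constant sequences whose two tail constants may differ, and $A_{diag}$ is its norm closure. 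Hence the Gelfand spectrum is the \emph{two-point} compactification $\{-\infty\}\cup\Z\cup\{+\infty\}$ --- which is precisely what the paper's proof asserts --- and $t_{+\infty}$, $t_{-\infty}$ are genuinely distinct pure states. Your conclusion that they coincide contradicts not only the lemma but the rest of the paper: the classification of invariant states carries independent weights $\lambda_{+\infty}$ and $\lambda_{-\infty}$, and the quotient in the short exact sequence is $C(S^1)\oplus C(S^1)$, reflecting the two boundary circles of the annulus.

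The fix is small and sits inside the argument you already sketched. In your step (3), when a character $\chi$ vanishes on $c_0(\Z)$ you cannot conclude $\chi(a)=\lim_{|k|\to\infty}a(k)$, because that two-sided limit need not exist for $a\in A_{diag}$. Instead, note that $\chi(P_{\ge0})\in\{0,1\}$ since $P_{\ge0}$ is a projection and $\chi$ is multiplicative; writing $a=aP_{\ge0}+aP_{<0}$ and observing that $aP_{\ge0}-a_{+\infty}P_{\ge0}$ and $aP_{<0}-a_{-\infty}P_{<0}$ lie in $c_0(\Z)$ (where $a_{\pm\infty}=\lim_{k\to\pm\infty}a(k)$), the choice $\chi(P_{\ge0})=1$ forces $\chi=t_{+\infty}$ and the choice $\chi(P_{\ge0})=0$ forces $\chi=t_{-\infty}$. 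With that correction your argument is complete and agrees with the paper's.
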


\begin{proof}
$A_{diag}$ is a commutative $C^*-$algebra that is isomorphic to the algebra of continuous functions on the two-point compactification of $\Z$, that is\begin{equation*}
A_{diag} \cong C(\{-\infty\}\cup\Z\cup\{\infty\}).
\end{equation*}
So by general theory, see \cite{KR} for details, the pure states are the Dirac measures (or point mass measures).
\end{proof}

We have the following classification theorem of the $\rho_\theta$-invariant states on $A$.

\begin{prop}
The $\rho_\theta$-invariant states on $A$ are in the closed convex hull of the states $\tau_k$ and $\tau_{\pm\infty}$ where $\tau_k(a) = t_k(E(a))$ and $\tau_{\pm\infty}(a) = t_{\pm\infty}(E(a))$.  Explicitly, if $\tau$ is a $\rho_\theta$-invariant state, there exists weights $w(k)\ge0$ satisfying: 
$$\sum_{k\in\Z} w(k) = 1,$$ 
and non-negative numbers $\lambda_0$ and $\lambda_{\pm\infty}$, with: 
$$\lambda_0 + \lambda_\infty + \lambda_{-\infty}  = 1,$$ 
such that: 
\begin{equation*}
\tau = \lambda_\infty\tau_\infty + \lambda_{-\infty}\tau_{-\infty} + \lambda_0\sum_{k\in\Z}w(k)\tau_k.
\end{equation*}
In the formula above we have:
$$\lambda_0 = \sum_{k\in\Z}\tau(P_k), \ w(k) = \lambda_0^{-1}\tau(P_k),$$
and also:
$$\lambda_{-\infty} = \tau(P_{\le0})-\sum_{k\le0}\tau(P_k),\ \lambda_\infty = \tau(P_{\ge0})-\sum_{k\ge0}\tau(P_k),$$ 
where again $P_k$ is the orthogonal projection onto the one dimensional subspace spanned by $E_k$ and $P_{\ge0}$ is the orthogonal projection onto $\textrm{span}\{E_k\}_{k\ge0}$.
\end{prop}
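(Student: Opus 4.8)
The plan is to reduce the classification of $\rho_\theta$-invariant states on $A$ to the classification of states on the commutative $C^*$-algebra $A_{diag}$, using the preceding lemma, and then apply the Riesz--Markov--Kakutani representation theorem on the two-point compactification of $\Z$. First I would invoke the lemma that every $\rho_\theta$-invariant state $\tau$ factors as $\tau(a) = t(E(a))$ for a uniquely determined state $t$ on $A_{diag}$, so the problem becomes describing all states on $A_{diag} \cong C(\{-\infty\}\cup\Z\cup\{\infty\})$. By Riesz--Markov, such a state $t$ corresponds to a Borel probability measure $\nu$ on the compact space $\{-\infty\}\cup\Z\cup\{\infty\}$; writing $\nu = \sum_{k\in\Z}\nu(\{k\})\,\delta_k + \nu(\{\infty\})\,\delta_\infty + \nu(\{-\infty\})\,\delta_{-\infty}$ and setting $\lambda_0 = \sum_{k\in\Z}\nu(\{k\})$, $w(k) = \lambda_0^{-1}\nu(\{k\})$ (when $\lambda_0>0$), $\lambda_{\pm\infty} = \nu(\{\pm\infty\})$, one gets the stated decomposition $t = \lambda_\infty t_\infty + \lambda_{-\infty}t_{-\infty} + \lambda_0\sum_k w(k)t_k$ with the normalization $\lambda_0 + \lambda_\infty + \lambda_{-\infty} = 1$ and $\sum_k w(k) = 1$. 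Composing with $E$ then yields $\tau = \lambda_\infty\tau_\infty + \lambda_{-\infty}\tau_{-\infty} + \lambda_0\sum_k w(k)\tau_k$.

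Next I would verify the explicit formulas for the coefficients in terms of $\tau$ evaluated on projections. Since $E(P_k) = P_k$, $E(P_{\ge 0}) = P_{\ge 0}$, etc.\ (these are all diagonal operators, hence fixed by $E$), we have $\tau(P_k) = t(P_k) = \nu(\{k\})$, which immediately gives $\lambda_0 = \sum_{k\in\Z}\tau(P_k)$ and $w(k) = \lambda_0^{-1}\tau(P_k)$. For the atoms at infinity, the point is that $P_{\ge 0}$ corresponds to the characteristic function of $\{0,1,2,\dots\}\cup\{\infty\}$ in $C(\{-\infty\}\cup\Z\cup\{\infty\})$, so $\tau(P_{\ge 0}) = \nu(\{0,1,\dots\}) + \nu(\{\infty\}) = \sum_{k\ge 0}\tau(P_k) + \lambda_\infty$, giving $\lambda_\infty = \tau(P_{\ge 0}) - \sum_{k\ge 0}\tau(P_k)$; the formula for $\lambda_{-\infty}$ is symmetric, using $P_{\le 0}$ (the characteristic function of $\{-\infty\}\cup\{\dots,-1,0\}$). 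One should check consistency, namely that $\lambda_0 + \lambda_\infty + \lambda_{-\infty} = 1$, which follows from $\tau(P_{\ge 0}) + \tau(P_{<0}) = 1$ together with the telescoping of the finite sums.

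I would also make sure that all the ingredient formulas — e.g.\ the identification $A_{diag} \cong C(\{-\infty\}\cup\Z\cup\{\infty\})$ via $a(\K)\mapsto$ the sequence $\{a(k)\}$ together with its two limits $a(\pm\infty) = \lim_{k\to\pm\infty}a(k)$, and the observation that the indicator functions $P_{\ge 0}$, $P_{\le 0}$ extended continuously to the compactification take value $1$ at exactly one of the ideal points — are stated carefully, since these are what convert abstract measure-theoretic data into the concrete expressions in the statement. The converse direction (that any such data defines an invariant state) is immediate from positivity and normalization of the building blocks $\tau_k$, $\tau_{\pm\infty}$, so no real work is needed there. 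I do not expect a serious obstacle here: the main subtlety is bookkeeping the boundary atoms $\lambda_{\pm\infty}$ correctly and being careful when $\lambda_0 = 0$ (in which case the weights $w(k)$ are not individually determined but the product $\lambda_0 w(k) = \tau(P_k) = 0$ still is), so the decomposition is stated as an existence result rather than a uniqueness one. The identification of $A_{diag}$ with continuous functions on the two-point compactification, already recorded in Lemma~\ref{pure_lem}, is what makes the whole argument routine.
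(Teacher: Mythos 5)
Your proposal is correct and follows the same route the paper intends: the paper's proof is a one-line appeal to Lemma \ref{pure_lem} (the identification $A_{diag}\cong C(\{-\infty\}\cup\Z\cup\{\infty\})$ and its pure states), and your argument — factoring $\tau$ through the expectation $E$, applying Riesz--Markov on the countable compactification so the measure is purely atomic, and reading off the coefficients by evaluating on the continuous indicator functions $P_k$, $P_{\ge0}$, $P_{\le0}$ — is exactly the "straightforward consequence" being invoked. The care you take with the boundary atoms and the degenerate case $\lambda_0=0$ is appropriate and nothing is missing.
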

\begin{proof} This is a straightforward consequence of Lemma \ref{pure_lem}, for similar details see also \cite{KMRSW1}.
\end{proof}

Below we use the following notation with $w(k)\ge0$ such that $\sum_{k\in\Z} w(k) = 1$: 
$$\tau_w(a):= \textrm{tr}(w(\K)a)=\sum_{k\in\Z}w(k)\tau_k(a).$$

\subsection{GNS representations}

Given a state $\tau$ on $A$ let $H_\tau$ be the GNS Hilbert space and let $\pi_\tau:A\to B(H_\tau)$ be the corresponding representation.
 We describe the Hilbert spaces and representations coming from the following  $\rho_\theta$-invariant states: $\tau_w$ with all $w(k)\ne 0$, $\tau_0$, and $\tau_{\pm\infty}$. The states $\tau_w$ with all $w(k)\ne 0$ are general $\rho_\theta$-invariant faithful normal states on $A$.

\begin{prop}\label{GNS_prop}
The  GNS Hilbert spaces with respect to the $\rho_\theta$-invariant states $\tau_w$ with all $w(k)\ne 0$, $\tau_0$, and $\tau_{\pm\infty}$ can be naturally identified with the following Hilbert spaces, respectively:
\begin{enumerate}
\item $H_{\tau_w}$ is the Hilbert space whose elements are power series \begin{equation*}
f = \sum_{n\in\Z}U^n f_n(\K)
\end{equation*}
such that \begin{equation}\label{w_inner_prod}
\|f\|_{\tau_w}^2 =\tau_w(f^*f)=\sum_{n\in\Z} \sum_{k\in\Z} w(k)|f_n(k)|^2<\infty.
\end{equation}
\item $H_{\tau_0} \cong \ell^2(\Z)$, $\pi_{\tau_0}$ is the defining representation of $A$.
\item $H_{\tau_{\pm\infty}} \cong L^2(S^1)$, $\pi_{\tau_{\pm\infty}}(U)$ is the multiplication by $e^{ix}$, and $\pi_{\tau_{\pm\infty}}(a(\K))$ is the multiplication by the constant $\lim_{k\to\pm\infty} a(k)$.
\end{enumerate}
\end{prop}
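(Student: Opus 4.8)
### Proof proposal

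The plan is to construct explicit unitary identifications in each of the three cases, using the general principle that for a $\rho_\theta$-invariant state $\tau$ the GNS space decomposes into "Fourier modes" indexed by $n\in\Z$ coming from the powers $U^n$, with the weight on each mode controlled by the restriction of $\tau$ to the diagonal subalgebra $A_{diag}$.

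\textbf{Case (1): the faithful states $\tau_w$.}
First I would use the crossed-product structure $A=A_{diag}\rtimes_{shift}\Z$ together with the natural expectation $E$ to write any $a\in\mathcal{A}$ as $a=\sum_n U^n a_n(\K)$ (a finite sum), and compute the GNS inner product. The key computation is that for $a=\sum_n U^n a_n(\K)$ and $b=\sum_m U^m b_m(\K)$, using the commutation relation \eqref{CommRel} and $U^*U=UU^*=I$, the product $a^*b$ has diagonal part $E(a^*b)=\sum_n \overline{a_n(\K-n)}\,b_n(\K-n)$ (shifted appropriately), so that $\tau_w(a^*b)=\sum_n\sum_k w(k)\,\overline{a_n(k-n)}\,b_n(k-n)=\sum_n\sum_\ell w(\ell+n)\,\overline{a_n(\ell)}\,b_n(\ell)$; after relabeling this gives the orthogonality of distinct Fourier modes and the stated norm formula \eqref{w_inner_prod}. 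I would then define $H_{\tau_w}$ as the abstract Hilbert space of formal series with that finite norm and check that the map sending the class of $a$ in the GNS space to its series $\sum_n U^n a_n(\K)$ extends to a unitary, using that $\mathcal{A}$ is dense in $A$ and hence its image is dense in the GNS space, and that the norm is preserved by the above computation; faithfulness of $\tau_w$ (all $w(k)\neq 0$) guarantees there is no kernel to quotient by. The representation $\pi_{\tau_w}$ then acts by left multiplication in the obvious way, with $U$ and diagonal operators acting as in $\mathcal{A}$.

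\textbf{Cases (2) and (3).}
For $\tau_0=\tau_{w}$ with $w=\delta_0$ (i.e.\ $t=t_0$), the same computation gives $\|a\|^2_{\tau_0}=\sum_n|a_n(-n)|^2$, which is visibly $\|aE_0\|^2$ in $\ell^2(\Z)$; so the GNS map is $a\mapsto \pi(a)E_0$ and one checks $E_0$ is cyclic for the defining representation, identifying $H_{\tau_0}\cong\ell^2(\Z)$ with $\pi_{\tau_0}$ the defining representation. For $\tau_{\pm\infty}$ one uses $t_{\pm\infty}(a(\K))=\lim_{k\to\pm\infty}a(k)$: since this limit kills the compact ideal $\mathcal{K}$, the GNS representation factors through the quotient $A/\mathcal{K}\cong C(S^1)\oplus C(S^1)$, and on the relevant $C(S^1)$ summand the state $\tau_{\pm\infty}$ is the normalized Haar/Lebesgue integral (because $\tau_{\pm\infty}(U^n)=0$ for $n\neq 0$ and $=1$ for $n=0$), whose GNS space is $L^2(S^1)$ with $U$ acting as multiplication by $e^{ix}$ and diagonal operators acting by the scalar $\lim_{k\to\pm\infty}a(k)$; here I would note that $\mathcal{A}/\mathcal{K}$ contains the trigonometric polynomials, which are dense in $L^2(S^1)$.

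\textbf{Main obstacle.}
The only real work is the Fourier-mode orthogonality computation in Case (1): one must be careful with the index shifts produced by \eqref{CommRel} when moving diagonal operators past powers of $U$ and $U^*$, and must justify that formal series with the finite $\ell^2$-type norm genuinely form a Hilbert space into which the GNS space embeds unitarily (density of $\mathcal{A}$ on one side, completeness of the target on the other). Once that bookkeeping is done, Cases (2) and (3) are special cases plus the observation that $\tau_0$ and $\tau_{\pm\infty}$ are, respectively, a vector state for the defining representation and a state that annihilates $\mathcal{K}$; as the excerpt notes, the details closely parallel \cite{KMRSW1}, so I would carry out the mode computation in full and then refer to the disk paper for the routine remainder.
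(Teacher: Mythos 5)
Your overall strategy --- compute the GNS sesquilinear form on $\mathcal{A}$ mode by mode using the crossed-product decomposition, identify the null space $\mathcal{A}_\tau$, and exhibit the quotient as a dense subspace of the target Hilbert space --- is exactly the paper's. Your treatment of $\tau_{\pm\infty}$ via the quotient $A/\mathcal{K}\cong C(S^1)\oplus C(S^1)$ (the state kills the compacts, hence the GNS representation factors through the quotient, where the state becomes normalized Haar measure) is a clean alternative packaging of the paper's direct computation $\tau_{\pm\infty}(a^*a)=\sum_n|a_{n,\pm\infty}|^2$, and it buys you the identification of the representation essentially for free.

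However, the one computation you single out as the main obstacle is the one you get wrong, and the error is not cosmetic. With the convention $a=\sum_n U^n a_n(\K)$ and the relation $a(\K)U=Ua(\K+1)$, one has $a^*=\sum_n U^{-n}\overline{a_n}(\K-n)$, and in $a^*b$ the two shifts cancel on the diagonal: $E(a^*b)=\sum_n\overline{a_n}(\K)\,b_n(\K)$ with \emph{no} shift, so $\tau_w(a^*a)=\sum_n\sum_k w(k)|a_n(k)|^2$, exactly formula (\ref{w_inner_prod}). Your claimed $E(a^*b)=\sum_n\overline{a_n(\K-n)}\,b_n(\K-n)$, leading to $\sum_n\sum_\ell w(\ell+n)\overline{a_n(\ell)}\,b_n(\ell)$, cannot be ``relabeled'' into (\ref{w_inner_prod}): the weight sits at $\ell+n$ instead of $\ell$, which changes the norm mode by mode (it defines a unitarily equivalent but genuinely different Hilbert space of series, so the stated identification would fail as written). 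The same slip propagates to Case (2): the correct value is $\tau_0(a^*a)=\sum_n|a_n(0)|^2$, which does equal $\|aE_0\|^2$ because $aE_0=\sum_n a_n(0)E_n$; your $\sum_n|a_n(-n)|^2$ is not $\|aE_0\|^2$. Fix this ordering bookkeeping (the correct computation is in fact simpler than the one you wrote) and the remainder of your argument goes through as described.
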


\begin{proof}
The first Hilbert space is just the completion of $A$ with respect to the inner product given by (\ref{w_inner_prod}) and was introduced in \cite{KM1}.  It is the natural analog of the classical space of square-integrable functions $L^2(A_r)$ for the quantum annulus.

The Hilbert space $H_{\tau_0}$ comes from the state $\tau_0(a) = \langle E_0, aE_0\rangle$. To describe it we first need to find the algebra: 
$$\mathcal{A}_{\tau_0}  = \{a\in\mathcal{A} : \tau_0(a^*a) = 0\}.$$  
A simple calculation yields:
\begin{equation*}
\tau_0(a^*a) = \sum_n  |a_n(0)|^2,
\end{equation*}
thus, if $\tau_0(a^*a) = 0$, we get that $a_n(0) = 0$ for all $n\in\Z$.    Then, we have:
\begin{equation*}
\mathcal{A}/\mathcal{A}_{\tau_0}  \cong \left\{a = \sum_n U^n a_n(0)P_0\right\},
\end{equation*}
and let $\|a\|_{\tau_0}^2 := \tau_0(a^*a)$.  Using the basis $\{E_n := U^nP_0\}$ in $\mathcal{A}/\mathcal{A}_{\tau_0}$ for $n\in\Z$, we can naturally identify $\mathcal{A}/\mathcal{A}_{\tau_0} $ with a dense subspace of $\ell^2(\Z)$.  

It is easy to describe the representation $\pi_{\tau_0} : A \to B(H_{\tau_0})$ of $A$ in the bounded operators on $H_{\tau_0}$. We have: 
$$\pi_{\tau_0}(U)E_n = E_{n+1},$$
 and 
\begin{equation*}
\pi_{\tau_0}(a(\K))E_n = a(\K)U^nP_0 = U^na(\K+n)P_0 = U^na(n)P_0 =a(n)E_n.
\end{equation*}
Notice also that we have:
$$\mathcal{A}/\mathcal{A}_{\tau_0}  \ni [I] \mapsto P_0:=E_0.$$ 
In other words, $\pi_{\tau_0}$ is the defining representation of the algebra $A$.

Next we look at the GNS spaces associated with $\tau_{\pm\infty}(a) = \lim_{k\to\pm\infty}\langle E_k , aE_k\rangle$.  If $a(\K)\in\mathcal{A}$,  we set 
$$a_{\pm\infty} = \lim_{k\to\pm\infty} a(k).$$ 
Again we want to find the subalgebra $\mathcal{A}_{\tau_{\pm\infty}}$ of $a\in\mathcal{A}$ such that $\tau_{\pm\infty}(a^*a) = 0$.  A direct computation shows that:
\begin{equation*}
\tau_\infty(a^*a) = \sum_n |a_{n,\pm\infty}|^2,
\end{equation*}
so $\tau_{\pm\infty}(a^*a) = 0$ if and only if $a_{n,\pm\infty} = 0$ for all $n$.  Now $\mathcal{A}/\mathcal{A}_{\tau_{\pm\infty}} $ can be identified with a dense subspace of $L^2(S^1)$ by
\begin{equation*}
\mathcal{A}/\mathcal{A}_{\tau_{\pm\infty}}  \ni [a] = \left[a = \sum_n U^n a_{n,\pm\infty}\right]\mapsto \sum_n a_{n,\pm\infty}e^{inx} =: f_a(x)\in L^2(S^1).
\end{equation*}
We also have the formula:
\begin{equation*}
\tau_{\pm\infty}([a]) = \frac{1}{2\pi}\int_0^{2\pi}f_a(x)\ dx.
\end{equation*}
The representation $\pi_{\tau_{\pm\infty}} : A \to B(H_{\tau_{\pm\infty}})$ is clearly seen to be given by:
$$\pi_{\tau_{\pm\infty}}(U)f(x) = e^{ix}f(x),$$
and 
\begin{equation*}
\pi_{\tau_{\pm\infty}}(a(\K))f(x) = a_{\pm\infty} f(x).
\end{equation*}
This completes the proof.
\end{proof}

\section{Implementations of Derivations}
Let $H_{\tau}$ be the Hilbert space formed from the GNS construction on $A$ using a $\rho_\theta$-invariant state $\tau$ and let $\pi_{\tau} : A \to B(H_{\tau })$ be the representation of $A$ in the bounded operators on $H_{\tau }$ via left multiplication, that is $\pi_{\tau }(a)f = [af]$.   We have that $[A]\subset H_{\tau }$ is dense in $H_{\tau }$ and $[I]\in H_{\tau }$ is cyclic. Let $\mathcal{D}_{\tau }$ be the subspace of $H_\tau$ given by:
$$\mathcal{D}_{\tau } = \pi_{\tau }(\mathcal{A})\cdot[I].$$  
Then $\mathcal{D}_{\tau }$ is dense in $H_{\tau }$.  Define $U_{\tau ,\theta} : \mathcal{D}_\tau\to \mathcal{D}_\tau$ via 
$$U_{\tau ,\theta}[a] = [\rho_\theta(a)].$$  
Notice that for every $\theta$ the operator $U_{\tau ,\theta}$ extends to a unitary operator in $H_{\tau }$.   Moreover, by direct calculation we get:
\begin{equation*}
U_{\tau ,\theta}\pi_\tau(a)U_{\tau ,\theta}^{-1} = \pi_{\tau }(\rho_\theta(a)).
\end{equation*}
It follows from the definitions that $U_{\tau ,\theta}(\mathcal{D}_{\tau })\subset\mathcal{D}_{\tau }$ and $\pi_\tau(\mathcal{A})(\mathcal{D}_{\tau })\subset\mathcal{D}_{\tau }$.

An operator $D_{\tau} :\mathcal{D}_\tau \to H_\tau$ is called an {\it implementation} of a derivation $d:\mathcal{A}\to A$ if 
$$[D_{\tau}, \pi_\tau(a)] = \pi_\tau(d(a)).$$ 
In this section we construct implementations of the derivations in $A$ in the GNS Hilbert spaces: $H_{\tau_w}$, $H_{\tau_0}$, and $H_{\tau_{\pm\infty}}$. The results are completely analogous to those in \cite{KMRSW1}, however we present here a more general approach to implementation questions.

It should be noted that an implementation of a derivation may not exist.  In fact, we have the following useful criterion.

\begin{prop}
Let $A$ be a $C^*$-algebra and let $\pi:A\to B(\mathcal{H})$ be its representation in some Hilbert space $\mathcal{H}$.  Let $\mathcal{A}\subset A$ be a dense subalgebra of $A$.  Let $d:\mathcal{A}\to A$ be a derivation and suppose that $x_0$ is a cyclic vector for $\pi(A)$.  There is an operator $D : \pi(\mathcal{A})x_0\to \mathcal{H}$ implementing $d$, i.e. 
$$[D,\pi(a)](x) = \pi(d(a))(x)$$
for every $a\in\mathcal{A}$ and $x\in\pi(\mathcal{A})x_0$, if and only if there is $z\in\mathcal{H}$ such that:
\begin{equation}\label{z_condition}
\textrm{ if }\pi(a)x_0=0\textrm{ then }
\pi(d(a))x_0 = -\pi(a)z.
\end{equation}
Moreover, any implementation $D$ of $d$ is of the form:
\begin{equation}\label{D_imp}
D(\pi(a)x_0) = \pi(d(a))x_0 + \pi(a)D(x_0),
\end{equation}
for $z:=D(x_0)\in\mathcal{H}$ satisfying the condition \ref{z_condition} above.
\end{prop}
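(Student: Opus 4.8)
The plan is to treat all three assertions at once by writing down the only possible candidate for $D$ and then deciding exactly when it is legitimate. Given a vector $z\in\mathcal{H}$, one attempts to define
\[
D\big(\pi(a)x_0\big) := \pi(d(a))x_0 + \pi(a)z, \qquad a\in\mathcal{A}.
\]
The first step is to check \emph{well-definedness}. An element of the domain $\pi(\mathcal{A})x_0$ is in general represented by many $a\in\mathcal{A}$, so I need $\pi(a)x_0=\pi(b)x_0$ to force $\pi(d(a))x_0+\pi(a)z=\pi(d(b))x_0+\pi(b)z$; by linearity of $d$ and $\pi$, writing $c=a-b$, this is precisely the implication ``$\pi(c)x_0=0 \Rightarrow \pi(d(c))x_0=-\pi(c)z$'', i.e.\ condition \ref{z_condition}. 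Conversely, once \ref{z_condition} holds the formula does define a linear operator $D$ on $\pi(\mathcal{A})x_0$. (Here one uses that $x_0=\pi(I)x_0\in\pi(\mathcal{A})x_0$, the algebra $\mathcal{A}$ being unital in all the cases we consider.)

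Next I would verify that this $D$ actually implements $d$. For $a,b\in\mathcal{A}$, using the Leibniz rule $d(ab)=d(a)b+a\,d(b)$,
\begin{align*}
[D,\pi(a)]\,\pi(b)x_0 &= D\big(\pi(ab)x_0\big) - \pi(a)D\big(\pi(b)x_0\big) \\
&= \pi(d(ab))x_0 + \pi(ab)z - \pi(a)\pi(d(b))x_0 - \pi(ab)z \\
&= \pi\big(d(a)b\big)x_0 = \pi(d(a))\,\pi(b)x_0,
\end{align*}
the two $\pi(ab)z$ terms cancelling; since $b$ is arbitrary this gives $[D,\pi(a)]=\pi(d(a))$ on all of $\pi(\mathcal{A})x_0$, settling the ``if'' direction.

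For the converse and the ``moreover'' clause, I would start from an arbitrary implementation $D$ and set $z:=D(x_0)$. Applying the identity $[D,\pi(a)]x=\pi(d(a))x$ to the vector $x=x_0$ gives $D(\pi(a)x_0)=\pi(d(a))x_0+\pi(a)D(x_0)$, which is exactly formula \ref{D_imp}; restricting to those $a$ with $\pi(a)x_0=0$ and using $D(0)=0$ then yields $\pi(d(a))x_0=-\pi(a)z$, i.e.\ condition \ref{z_condition} for this $z$. This closes the equivalence.

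The only genuine obstacle is the well-definedness step: $\pi(\mathcal{A})x_0$ is the quotient of $\mathcal{A}$ by the left ideal $\{a : \pi(a)x_0=0\}$, and the content of \ref{z_condition} is precisely that, although neither the ``derivation part'' $\pi(d(a))x_0$ nor the ``correction part'' $\pi(a)z$ need descend to this quotient on its own, their sum does. Everything else is routine bookkeeping with linearity and the Leibniz rule. One minor point to flag is that the argument needs $x_0$ to lie in the domain of $D$, which is automatic when $\mathcal{A}$ is unital; in a non-unital setting one would instead run the converse argument along an approximate unit in $\mathcal{A}$ and pass to a limit, which is not required here.
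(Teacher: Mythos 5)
Your proposal is correct and follows essentially the same route as the paper: define $D$ by the formula $D(\pi(a)x_0)=\pi(d(a))x_0+\pi(a)z$, observe that well-definedness on the quotient by $\{a:\pi(a)x_0=0\}$ is exactly condition (\ref{z_condition}), verify the commutator identity via the Leibniz rule, and for the converse set $z=D(x_0)$ and evaluate $[D,\pi(a)]$ at $x_0$. Your extra remark about needing $x_0=\pi(I)x_0$ in the domain is a sensible clarification of a point the paper leaves implicit, but it does not change the argument.
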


\begin{proof}
We first prove the sufficient condition.  If there exists a $D$ such that $[D,\pi(a)] = \pi(da)$, then in particular we have $[D,\pi(a)]x_0 = \pi(da)x_0$.  Suppose that $\pi(a)x_0=0$, then
\begin{equation*}
\pi(da)x_0 = D(\pi(a)x_0) - \pi(a)D(x_0) = -\pi(a)D(x_0).
\end{equation*}
with $z= D(x_0)$.  This proves the sufficiency.

On the other hand, suppose there exists such a $z$ with the desired properties in the statement of the theorem.  Define $D$ by the formula above:
$$D(\pi(a)x_0) = \pi(da)x_0 + \pi(a)z.$$ 
We need to check that $D$ is well-defined.  If $\pi(a)x_0 = \pi(\tilde a)x_0$ then, by the properties of $z$, we have: 
$$\pi(d(\tilde a- a))x_0 = -\pi(\tilde a- a)z.$$
Consequently, this gives:
\begin{equation*}
\begin{aligned}
D(\pi(\tilde{a})x_0) &= \pi(d(\tilde{a}))x_0 + \pi(\tilde{a})z = \pi(d(\tilde{a}-a))x_0 + \pi(\tilde{a}-a)z + \pi(d(a))x_0 +\pi(a)z \\
&= -\pi(\tilde a- a)z + \pi(\tilde a- a)z + D(\pi(a)x_0) = D(\pi(a)x_0),
\end{aligned}
\end{equation*}
and so the definition of $D$ makes sense.  

Given $a$ and $b$ in $\mathcal{A}$, consider the following calculation:
\begin{equation*}
\begin{aligned}
\left[D,\pi(b)\right]\pi(a)x_0 &= D(\pi(b)\pi(a)x_0) - \pi(b)D(\pi(a)x_0) \\
&= d(\pi(ba))x_0 + \pi(b)\pi(a)z - \pi(b)d(\pi(a))x_0 - \pi(ba)z \\
&=d(\pi(b))\pi(a)x_0 + \pi(b)d(\pi(a))x_0 - \pi(b)d(\pi(a))x_0 = d(\pi(b))\pi(a)x_0.
\end{aligned}
\end{equation*}
It shows that $D$ is in fact an implementation of $d$, verifying the necessity and completing the proof.
\end{proof}

We want to apply the above proposition to our unital algebra $A$ and GNS representations $\pi_\tau$ coming from invariant states $\tau$. The cyclic vector $x_0$ can be chosen to be the class of the identity: $x_0=[I]$. Notice that, in condition (\ref{z_condition}) we have:
$$\{a\in\mathcal{A} : \pi_\tau(a)x_0=0\}  = \{a\in\mathcal{A} : \tau(a^*a) = 0\}=\mathcal{A}_{\tau},$$ 
and algebras $\mathcal{A}_{\tau}$ were described in proofs in the previous section.

\subsection{Invariant derivations}
We first consider implementations of $\rho_\theta$-invariant derivations. Let $d_\beta$ be a $\rho_\theta$-invariant derivation $d_\beta:\mathcal{A}\to A$, $d_\beta(a) = [\beta(\K), a]$, as described in Proposition \ref{invar_der_rep}. Since $\beta(k)$ is defined up to a constant, we normalize it so that $\beta(0)=0$.

\begin{defin}
$D_{\tau} :\mathcal{D}_\tau \to H_\tau$ is called an {\it invariant implementation} of a $\rho_\theta$-invariant derivation $d_\beta$ if 
$$[D_{\tau}, \pi_\tau(a)] = \pi_\tau(d_\beta(a))\ \textrm{ and }\ U_{\tau,\theta}D_{\tau} U_{\tau,\theta}^{-1} = D_{\tau}.$$
\end{defin}

Below we discuss invariant implementations of invariant derivations in $A$ in various GNS Hilbert spaces.

\begin{prop} There exists a function $\alpha(k)$, satisfying:
$$\sum_{k\in\Z}|\beta(k)-\alpha(k)|^2w(k)<\infty,$$ 
such that any invariant implementation $D_{\beta,\tau_w} :\mathcal{D}_{\tau_w} \to H_{\tau_w}$ of $d_\beta$ is given by:
\begin{equation*}
D_{\beta,\tau_w}a = \beta(\K)a-a\alpha(\K).
\end{equation*}
\end{prop}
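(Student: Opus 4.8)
The plan is to apply the general implementation criterion (the Proposition preceding this statement) with the cyclic vector $x_0 = [I]$ and the state $\tau = \tau_w$, and then impose the invariance condition $U_{\tau_w,\theta}D_{\tau_w}U_{\tau_w,\theta}^{-1} = D_{\tau_w}$. By that Proposition, every implementation has the form $D_{\beta,\tau_w}[a] = [d_\beta(a)] + \pi_{\tau_w}(a)z$ where $z = D_{\beta,\tau_w}[I] \in H_{\tau_w}$ must satisfy condition (\ref{z_condition}); since $A$ is unital and $\mathcal{A}_{\tau_w} = \{0\}$ (as all $w(k)\ne 0$ makes $\tau_w$ faithful on $\mathcal{A}$), condition (\ref{z_condition}) is vacuous, so $z$ is an arbitrary element of $H_{\tau_w}$. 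Thus implementations are parametrized by $z\in H_{\tau_w}$ via $D_{\beta,\tau_w}[a] = [\beta(\K)a - a\beta(\K)] + [az]$, using $d_\beta(a) = [\beta(\K),a]$.

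Next I would use the invariance requirement to pin down $z$. Conjugating by $U_{\tau_w,\theta}$ acts on a power series $f = \sum_n U^n f_n(\K)$ by $U_{\tau_w,\theta} f = \sum_n e^{in\theta} U^n f_n(\K)$, i.e. it multiplies the $n$-th Fourier component by $e^{in\theta}$. The operator $a \mapsto [\beta(\K)a - a\beta(\K)]$ is already invariant (this is exactly the $\rho_\theta$-invariance of $d_\beta$ transported to the implementation). Writing $D_{\beta,\tau_w}[I] = z$ and $U_{\tau_w,\theta}D_{\beta,\tau_w}U_{\tau_w,\theta}^{-1}[I] = U_{\tau_w,\theta} z$, invariance forces $U_{\tau_w,\theta} z = z$ for all $\theta$, hence $z$ lies in the fixed-point subspace of $H_{\tau_w}$ under all $U_{\tau_w,\theta}$, which is precisely the diagonal part: $z = -\alpha(\K)$ for some sequence $\alpha$ (the minus sign is a normalization choice). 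The finiteness condition $\|z\|_{\tau_w}^2 = \sum_k |\alpha(k)|^2 w(k) < \infty$ is just the requirement $z\in H_{\tau_w}$. Here I should double-check that imposing invariance only on $[I]$ actually yields invariance of the whole operator $D_{\beta,\tau_w}$, which follows because $D_{\beta,\tau_w}$ is determined by its value on $[I]$ together with the (invariant) commutator structure, exactly as in formula (\ref{D_imp}).

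Finally, substituting $z = -\alpha(\K)$ into (\ref{D_imp}) gives $D_{\beta,\tau_w}[a] = [\beta(\K)a - a\beta(\K)] + [a\cdot(-\alpha(\K))] = [\beta(\K)a - a\alpha(\K)]$, which is the claimed formula; and rewriting $\beta(\K)a - a\beta(\K) = d_\beta(a)$ plus the correction term shows it genuinely implements $d_\beta$. To match the stated finiteness condition $\sum_k |\beta(k)-\alpha(k)|^2 w(k) < \infty$ rather than $\sum_k |\alpha(k)|^2 w(k) < \infty$, I would note that the two are equivalent after absorbing a shift: since $\beta$ is only defined modulo a constant and since $\beta(\K)\cdot[I] = [\beta(\K)]$ need not itself lie in $H_{\tau_w}$ (as $\beta$ has convergent increments but may grow linearly), the natural object is the difference $\beta(\K) - \alpha(\K)$ applied to $[I]$; reparametrizing $z$ accordingly gives the stated condition. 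I expect the main obstacle to be exactly this bookkeeping point — making precise the sense in which $\beta(\K)$ acts on $\mathcal{D}_{\tau_w}$ when $\beta$ is unbounded, and verifying that the combination $\beta(\K)a - a\alpha(\K)$ (with the compatibility condition on $\beta - \alpha$) lands in $H_{\tau_w}$ for all $a\in\mathcal{A}$, which requires checking the decay of Fourier components weighted by $w(k)$.
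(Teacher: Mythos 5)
Your proposal follows exactly the paper's route: apply the general implementation criterion at $x_0=[I]$, use faithfulness of $\tau_w$ to see that condition (\ref{z_condition}) is vacuous so $z=D_{\beta,\tau_w}[I]$ is arbitrary in $H_{\tau_w}$, and use invariance to force $z$ to be diagonal, $z=\eta(\K)$. The only flaw is a bookkeeping slip in the last step: the paper sets $\alpha(k)=\beta(k)-\eta(k)$, so that $D_{\beta,\tau_w}a=[\beta(\K),a]+a\eta(\K)=\beta(\K)a-a\alpha(\K)$ and the finiteness condition $\sum_k|\beta(k)-\alpha(k)|^2w(k)=\|\eta(\K)\|_{\tau_w}^2<\infty$ is immediate, whereas your choice $z=-\alpha(\K)$ makes the displayed equality $[\beta(\K)a-a\beta(\K)]+a(-\alpha(\K))=\beta(\K)a-a\alpha(\K)$ false as written, and the conditions $\sum_k|\alpha(k)|^2w(k)<\infty$ and $\sum_k|\beta(k)-\alpha(k)|^2w(k)<\infty$ are genuinely inequivalent (since $\beta$ is typically unbounded and $[\beta(\K)]\notin H_{\tau_w}$, as you yourself note) --- the fix is simply the reparametrization $\alpha\mapsto\beta-\alpha$, i.e.\ absorbing the full function $\beta$, not merely a constant.
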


\begin{proof}
We start by computing $U_{\tau_w,\theta}$. From the definitions we have:
\begin{equation*}
U_{\tau_w,\theta}(a) = \sum_n U^n e^{in\theta}a_n(\K).
\end{equation*}

Since $\tau_w$ is faithful we have that $\mathcal{A}_{\tau_w}=0$, which implies tha $z=D_{\beta,\tau_w}(I)$ is arbitrary, since both sides of condition (\ref{z_condition}) are equal to zero. However,
it follows from the assumptions of invariant implementations that $D_{\beta,\tau_w}(I)$ must be invariant with respect to $U_{\tau_w,\theta}$. This implies that $D_{\beta,\tau_w}(I) = \eta(\K)$ for some diagonal operator $\eta(\K)\in H_{\tau_w}$.  Thus the formula (\ref{D_imp}) gives:
\begin{equation*}
\begin{aligned}
D_{\beta,\tau_w}a = D_{\beta,\tau_w}\pi_{\tau_w}(a)\cdot I = [\beta(\K),a] + a\eta(\K) =  \beta(\K)a-a\alpha(\K),
\end{aligned}
\end{equation*}
where $\alpha(k)=\beta(k)-\eta(k)$. Notice also that $\eta(\K)\in H_{\tau_w}$ implies that:
\begin{equation*} 
\|\eta(\K)\|^2_{\tau_w}=\sum_{k\in\Z}|\beta(k)-\alpha(k)|^2w(k)<\infty.
\end{equation*}
Thus the result follows.
\end{proof}

\begin{prop}
There is a number $c$ such that any invariant implementation $D_{\beta,\tau_0} :\mathcal{D}_{\tau_0} \to \ell^2(\Z )$ of $d_\beta$ is of the form:
\begin{equation*}
D_{\beta,\tau_0} = c\cdot I + \beta(\K).
\end{equation*}
\end{prop}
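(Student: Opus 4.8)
The plan is to feed the hypothesis straight into the general criterion established just before the invariant-derivations subsection. For the state $\tau_0(a)=\langle E_0,aE_0\rangle$ the cyclic vector $x_0=[I]$ is, under the identification $H_{\tau_0}\cong\ell^2(\Z)$ of Proposition \ref{GNS_prop}, simply $E_0=P_0$. Hence by formula (\ref{D_imp}) every invariant implementation satisfies
\begin{equation*}
D_{\beta,\tau_0}\big(\pi_{\tau_0}(a)E_0\big)=\pi_{\tau_0}(d_\beta(a))E_0+\pi_{\tau_0}(a)\,z,\qquad z:=D_{\beta,\tau_0}(E_0)\in\ell^2(\Z),
\end{equation*}
so the whole problem reduces to identifying the single vector $z$.

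First I would pin down $z$ using the invariance requirement. Since $U_{\tau_0,\theta}[a]=[\rho_\theta(a)]$ and $\rho_\theta(U^n)=e^{in\theta}U^n$, the operator $U_{\tau_0,\theta}$ acts on $\ell^2(\Z)$ by $U_{\tau_0,\theta}E_n=e^{in\theta}E_n$; in particular $E_0$ is fixed by every $U_{\tau_0,\theta}$. Applying $U_{\tau_0,\theta}D_{\beta,\tau_0}U_{\tau_0,\theta}^{-1}=D_{\beta,\tau_0}$ to $E_0$ then gives $U_{\tau_0,\theta}z=z$ for all $\theta$, and the only vectors in $\ell^2(\Z)$ fixed by all the $U_{\tau_0,\theta}$ are the multiples of $E_0$; thus $z=c\,E_0$ for some $c\in\C$.

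Second, I would substitute $z=cE_0$ into the displayed formula and simplify. Writing $a=\sum_nU^na_n(\K)$, the Fourier form of the derivation is $d_\beta(a)=[\beta(\K),a]=\sum_nU^na_n(\K)\big(\beta(\K+n)-\beta(\K)\big)$, which lies in $A$ since $\beta\in c_{inc}(\Z)$; applying $\pi_{\tau_0}(\,\cdot\,)$ to $E_0$ and using the normalization $\beta(0)=0$ gives $\pi_{\tau_0}(d_\beta(a))E_0=\sum_na_n(0)\beta(n)E_n=\beta(\K)\,\pi_{\tau_0}(a)E_0$, where $\beta(\K)$ is unambiguously defined on the finitely supported vector $\pi_{\tau_0}(a)E_0=\sum_na_n(0)E_n$. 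Combining, $D_{\beta,\tau_0}(\pi_{\tau_0}(a)E_0)=(\beta(\K)+cI)\big(\pi_{\tau_0}(a)E_0\big)$, and since $\mathcal{D}_{\tau_0}=\pi_{\tau_0}(\mathcal{A})E_0$ this says precisely $D_{\beta,\tau_0}=c\cdot I+\beta(\K)$.

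There is no real obstacle here: the argument is pure bookkeeping inside the machinery already set up, and the only point that needs a word of care is the passage from invariance of the operator $D_{\beta,\tau_0}$ to invariance of the vector $z=D_{\beta,\tau_0}(E_0)$, which is immediate once one observes that $E_0$ is $U_{\tau_0,\theta}$-fixed. If desired, one can add via condition (\ref{z_condition}) that conversely each $c\in\C$ genuinely yields an invariant implementation — when $\pi_{\tau_0}(a)E_0=0$ both $\pi_{\tau_0}(d_\beta(a))E_0$ and $c\,\pi_{\tau_0}(a)E_0$ vanish — although the statement as phrased needs only the forward direction.
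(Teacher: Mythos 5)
Your proposal is correct and follows essentially the same route as the paper: identify $z=D_{\beta,\tau_0}(E_0)$, use the invariance $U_{\tau_0,\theta}E_n=e^{in\theta}E_n$ to force $z=cE_0$, and then evaluate formula (\ref{D_imp}) with the normalization $\beta(0)=0$. The paper additionally remarks (via the test elements $a_N=U^{-N}\delta_N(\K)$) that $z=cE_0$ is forced even without the invariance assumption, but that is a strengthening beyond what the stated proposition requires.
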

\begin{proof}
First we find $U_{\tau_0,\theta}$. Since $\rho_\theta(U^nP_0)=e^{in\theta}U^nP_0$, we have:
$$U_{\tau_0,\theta}E_n=e^{in\theta}E_n.$$
Because $D_{\beta,\tau_0}E_0$ must be invariant with respect to $U_{\tau_0,\theta}$, we have 
$$D_{\beta,\tau_0}E_0 = cE_0$$ 
for some constant $c$.  

We need to check that $z=cE_0$ satisfies condition (\ref{z_condition}) to guarantee the existence of $D_{\beta,\tau_0}$.  In $\ell^2(\Z)$, $[I] = E_0$, therefore $\pi_{\tau_0}(a)E_0 = 0$ if and only if $a_n(0) = 0$ for every $n$ as in the proof of Proposition \ref{GNS_prop}. We have:
\begin{equation*}
\pi_{\tau_0}(d(a))E_0=\beta(\K)aE_0-a\beta(\K)E_0=-\beta(0)aE_0=0.
\end{equation*}
Consequently, $z$ has to satisfy $az=0$ whenever $aE_0=0$, which is clearly true for $z=cE_0$. 

By using the formula (\ref{D_imp}) we get:
\begin{equation*}
D_{\beta,\tau_0}E_n = [\beta(\K), U^n]E_0 + cE_n = (\beta(\K) - \beta(\K-n))E_n + cE_n = (\beta(n)  + c)E_n.
\end{equation*}
Notice that the above $D_{\beta,\tau_0}$ are the only implementations even if we don't assume their invariance. Indeed, consider any $z$ satisfying condition (\ref{z_condition}). Define the following operator:
\begin{equation*}
a_N = U^{-N}\delta_{N}(\K)
\end{equation*}
for $N\neq 0$, where $\delta_N(k)=1$ for $k=N$ and zero otherwise.  Clearly we have $a_NE_0 = 0$.  Consequently, we obtain:
$$0=\langle E_{0}, a_Nz\rangle=\langle E_{N}, z\rangle$$ for all $N\ne 0$, thus $z = cE_0$ for some constant $c$.   This completes the proof.
\end{proof}

\begin{prop}
There exists a number $c$ such that all invariant implementations $D_{\beta,\tau_{\pm\infty}} : \mathcal{D}_{\tau_{\pm\infty}} \to L^2(S^1)$ of $d_\beta$ are of the form:
\begin{equation*}
D_{\beta,\tau_{\pm\infty}} = \beta_{\pm\infty}\frac{1}{i}\frac{d}{dx} + c,
\end{equation*}
where
$$\beta_{\pm\infty}:=\lim_{k\to\pm\infty}\left(\beta(k)-\beta(k-1)\right).$$
\end{prop}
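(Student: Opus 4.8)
The plan is to run the same machinery used in the two preceding propositions: every implementation of $d_\beta$ has the form (\ref{D_imp}) with $z:=D_{\beta,\tau_{\pm\infty}}(x_0)$ subject to (\ref{z_condition}), where $x_0=[I]$ is the cyclic vector, which under the identification of Proposition \ref{GNS_prop} is the constant function $1\in L^2(S^1)$. The rotation invariance of the implementation will pin $z$ down to a constant, and expanding (\ref{D_imp}) will exhibit the first-order differential operator.

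First I would compute the unitary $U_{\tau_{\pm\infty},\theta}$. Since $\rho_\theta(U^n a_{n,\pm\infty})=e^{in\theta}U^n a_{n,\pm\infty}$, the identification $[a]\mapsto f_a(x)=\sum_n a_{n,\pm\infty}e^{inx}$ turns $U_{\tau_{\pm\infty},\theta}$ into the rotation $f(x)\mapsto f(x+\theta)$ on $L^2(S^1)$. The vector $x_0=1$ is rotation invariant, so applying the invariance requirement $U_{\tau_{\pm\infty},\theta}D_{\beta,\tau_{\pm\infty}}U_{\tau_{\pm\infty},\theta}^{-1}=D_{\beta,\tau_{\pm\infty}}$ to $x_0$ forces $z=D_{\beta,\tau_{\pm\infty}}(1)$ to be rotation invariant, hence a constant $c$.

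Next I would check that $z=c$ satisfies the compatibility condition (\ref{z_condition}), and simultaneously read off the formula. For $a=\sum_n U^n a_n(\K)$, relation (\ref{CommRel}) gives $d_\beta(a)=\sum_n U^n(\beta(\K+n)-\beta(\K))a_n(\K)$, so $\pi_{\tau_{\pm\infty}}(d_\beta(a))x_0$ is multiplication by $\sum_n \big(\lim_{k\to\pm\infty}(\beta(k+n)-\beta(k))a_n(k)\big)e^{inx}$. Because $\beta\in c_{inc}(\Z)$, a telescoping argument yields $\lim_{k\to\pm\infty}(\beta(k+n)-\beta(k))=n\beta_{\pm\infty}$, while $\lim_{k\to\pm\infty}a_n(k)=a_{n,\pm\infty}$; hence $\pi_{\tau_{\pm\infty}}(d_\beta(a))x_0=\sum_n n\beta_{\pm\infty}a_{n,\pm\infty}e^{inx}=\beta_{\pm\infty}\tfrac1i\tfrac{d}{dx}f_a$. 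In particular, when $\pi_{\tau_{\pm\infty}}(a)x_0=f_a=0$ both sides of (\ref{z_condition}) vanish, so the condition holds for every $c$. Substituting $z=c$ into (\ref{D_imp}) then gives $D_{\beta,\tau_{\pm\infty}}f_a=\beta_{\pm\infty}\tfrac1i\tfrac{d}{dx}f_a+c\,f_a$ on the dense domain $\mathcal{D}_{\tau_{\pm\infty}}$ spanned by the exponentials $e^{inx}$, which is the asserted form; conversely each such operator obviously commutes with all rotations and is therefore an invariant implementation.

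The one step requiring genuine care is the limit interchange $\lim_{k\to\pm\infty}\big[(\beta(k+n)-\beta(k))a_n(k)\big]=n\beta_{\pm\infty}a_{n,\pm\infty}$, i.e. the passage from the finite-difference multiplier $\beta(\K+n)-\beta(\K)$ to the linear-in-$n$ multiplier $n\beta_{\pm\infty}$ — this is precisely where the hypothesis $\beta\in c_{inc}(\Z)$ is used and where the differential operator $\tfrac1i\tfrac{d}{dx}$ emerges from a difference operator in the limit $k\to\pm\infty$. Everything else is routine bookkeeping with the GNS picture of Proposition \ref{GNS_prop} and the general implementation formula.
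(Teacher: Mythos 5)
Your proposal is correct and follows essentially the same route as the paper: identify $U_{\tau_{\pm\infty},\theta}$ as rotation, use invariance to force $D_{\beta,\tau_{\pm\infty}}(1)=c$, verify the compatibility condition via the formula $d_\beta(a)=\sum_n U^n(\beta(\K+n)-\beta(\K))a_n(\K)$, and evaluate the implementation formula on the exponentials $e^{inx}$ using $\lim_{k\to\pm\infty}(\beta(k+n)-\beta(k))=n\beta_{\pm\infty}$. No gaps.
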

\begin{proof}
Like in the other proofs we need to understand what is the value of $D_{\beta,\tau_{\pm\infty}}$ on $[I]$, the constant function $1$.  A simple calculation shows that:
\begin{equation*} 
(U_{\tau_{\pm\infty},\theta} f)(x) = f(x + \theta).
\end{equation*}
It is clear by the invariance properties that there exists a constant $c$ such that: 
$$D_{\beta,\tau_{\pm\infty}}(1) = c\cdot 1.$$  

Notice that $\pi_{\tau_{\pm\infty}}(a) 1 = 0$ if and only if $a_{n,\pm\infty} = 0$.  Using the formula:
\begin{equation*}
d_\beta(a)=\sum_n U^n(\beta(\K+n)-\beta(\K))a_n(\K),
\end{equation*}
we see that if $\pi_{\tau_{\pm\infty}}(a) 1 = 0$ then also $\pi_{\tau_{\pm\infty}}(d_\beta(a)) 1 = 0$. Consequently, $z=c\cdot 1$ satisfies condition (\ref{z_condition}).

Notice that $\mathcal{D}_{\tau_{\pm\infty}}$ is the space of trigonometric polynomials on  $S^1$. By linearity we only need to look at $D_{\beta,\tau_{\pm\infty}}$ on $e^{inx}$.  We have:
\begin{equation*}
\begin{aligned}
D_{\beta,\tau_{\pm\infty}}(e^{inx}) &=  \pi_{\tau_{\pm\infty}}(d(U^n)) + \pi_{\tau_{\pm\infty}}(U^n)D_{\beta,\tau_{\pm\infty}}(1) \\
&= \pi_{\tau_{\pm\infty}}(U^n)\cdot\lim_{k\to\pm\infty}\left(\beta(k+n) - \beta(k)\right) + c\pi_{\tau_{\pm\infty}}(U^n)(1) \\
&= e^{inx}\left(n\beta_{\pm\infty} + c\right) = \beta_{\pm\infty}\frac{1}{i}\frac{d}{dx}(e^{inx}) + ce^{inx}.
\end{aligned}
\end{equation*}
This completes the proof.
\end{proof}

\subsection{Covariant derivations}

Now let $\tilde d_\beta$ be a $\rho_\theta$-covariant derivation $\tilde d_\beta:\mathcal{A}\to A$, of the form $\tilde d_\beta(a) = [U\beta(\K), a]$, as proved in Proposition \ref{covar_der_rep}.
Let $\tau$ be a $\rho_\theta$-invariant state.

\begin{defin}
$\tilde D_{\tau} :\mathcal{D}_\tau \to H_\tau$ is called an {\it implementation} of a $\rho_\theta$-covariant derivation $\tilde d_\beta$ if 
$$[\tilde D_{\tau}, \pi_\tau(a)] = \pi_\tau(\tilde d_\beta(a))\ \textrm{ and }\ U_{\tau,\theta}\tilde D_{\tau} U_{\tau,\theta}^{-1} = e^{i\theta}\tilde D_{\tau}$$
for every $a$ and $\theta$.
\end{defin}

We state without proofs the analogs of the above implementation results for covariant derivations; the verifications are simple modifications of the arguments for invariant derivations.

\begin{prop}
 There exists a function $\alpha(k)$, $\sum_{k\in\Z}|\beta(k)-\alpha(k)|^2w(k)<\infty$, such that any implementation $\tilde D_{\beta,\tau_w} :\mathcal{D}_{\tau_w} \to H_{\tau_w}$ of $\tilde d_\beta$ is uniquely represented by:
\begin{equation*}
\tilde D_{\beta,\tau_w}f = U\beta(\K)f - fU\alpha(\K).
\end{equation*}
\end{prop}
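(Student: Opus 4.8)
The plan is to apply the general implementation criterion (the earlier proposition characterizing when a derivation admits an implementation and the form \eqref{D_imp} of all such implementations), exactly as in the proof given above for the invariant case, but tracking the extra covariance twist $U_{\tau_w,\theta}\tilde D_{\tau_w} U_{\tau_w,\theta}^{-1} = e^{i\theta}\tilde D_{\tau_w}$. Since $\tau_w$ is faithful, $\mathcal{A}_{\tau_w}=\{0\}$, so condition \eqref{z_condition} is vacuous and $z := \tilde D_{\beta,\tau_w}(I)$ may be any element of $H_{\tau_w}$; every implementation is then given by $\tilde D_{\beta,\tau_w}\pi_{\tau_w}(a)\cdot I = \pi_{\tau_w}(\tilde d_\beta(a))\cdot I + \pi_{\tau_w}(a)z$.

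First I would compute $U_{\tau_w,\theta}$, which from the definitions acts by $U_{\tau_w,\theta}\bigl(\sum_n U^n a_n(\K)\bigr) = \sum_n e^{in\theta} U^n a_n(\K)$; that is, it is diagonalized by the grading $f = \sum_n U^n f_n(\K)$ with $U^n f_n(\K)$ in the $e^{in\theta}$-eigenspace. Next I would use the covariance relation to pin down $z$: the condition $U_{\tau_w,\theta}\tilde D_{\tau_w} U_{\tau_w,\theta}^{-1} = e^{i\theta}\tilde D_{\tau_w}$ applied to $[I]$ (which is $U_{\tau_w,\theta}$-invariant) forces $U_{\tau_w,\theta}z = e^{i\theta} z$, so $z$ lies in the $n=1$ graded component, i.e. $z = U\gamma(\K)$ for some diagonal $\gamma(\K)\in H_{\tau_w}$. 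Writing $\alpha(k) := \beta(k) - \gamma(k)$ and substituting into \eqref{D_imp} with $\tilde d_\beta(a)=[U\beta(\K),a]$ then yields
\begin{equation*}
\tilde D_{\beta,\tau_w}f = [U\beta(\K),f] + fU\gamma(\K) = U\beta(\K)f - fU\beta(\K) + fU\gamma(\K) = U\beta(\K)f - fU\alpha(\K),
\end{equation*}
using that $\gamma(\K)$, being diagonal, commutes with nothing problematic here—rather one just regroups $-fU\beta(\K)+fU\gamma(\K) = -fU\alpha(\K)$. Finally, $z = U\gamma(\K)\in H_{\tau_w}$ means $\|U\gamma(\K)\|_{\tau_w}^2 = \sum_{k\in\Z} w(k)|\gamma(k)|^2 = \sum_{k\in\Z} w(k)|\beta(k)-\alpha(k)|^2 < \infty$, which is the asserted summability condition on $\alpha$.

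One still needs to check that the map so defined is genuinely well-defined on $\mathcal{D}_{\tau_w}$ and satisfies the commutator identity—but this is immediate from the cited general proposition, since any $z\in H_{\tau_w}$ satisfies the (empty) hypothesis \eqref{z_condition}. The only genuine subtlety, and the step I expect to require the most care, is verifying that the grading argument identifying $z$ with an element of the $n=1$ component is valid at the level of the Hilbert space completion rather than just on $\mathcal{A}$: one must confirm that $U_{\tau_w,\theta}$ extends to a strongly continuous unitary representation of $S^1$ on $H_{\tau_w}$ whose spectral subspaces are precisely the closures of the graded pieces $\{U^n f_n(\K)\}$, so that the eigenvalue equation $U_{\tau_w,\theta}z = e^{i\theta}z$ really forces $z$ into the single component $n=1$. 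This follows readily from the orthogonal decomposition $H_{\tau_w} = \bigoplus_{n\in\Z} \overline{U^n A_{diag}}$ implicit in \eqref{w_inner_prod}, so it is routine, just slightly more than formal.
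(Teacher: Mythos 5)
Your argument is correct and is exactly the ``simple modification of the arguments for invariant derivations'' that the paper invokes in lieu of a written proof: faithfulness of $\tau_w$ makes condition \eqref{z_condition} vacuous, covariance forces $z=\tilde D_{\beta,\tau_w}(I)$ into the $n=1$ graded component so $z=U\gamma(\K)$, and \eqref{D_imp} with $\alpha=\beta-\gamma$ gives both the formula and the summability condition. Nothing further is needed.
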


\begin{prop}
Any implementation $\tilde D_{\beta,\tau_0} :\mathcal{D}_{\tau_0} \to \ell^2(\Z )$ of $\tilde d_\beta$ is of the form:
\begin{equation*}
\tilde D_{\beta,\tau_0}a = U\beta(\K)a,
\end{equation*}
i.e. on basis elements $\tilde D_{\beta,\tau_0}E_n = \beta(n)E_{n+1}$.
\end{prop}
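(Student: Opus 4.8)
The plan is to apply the general implementation criterion proved earlier in this section, taking $\pi=\pi_{\tau_0}$ to be the defining representation on $\ell^2(\Z)$ and the cyclic vector to be $x_0=[I]=E_0$. By formula (\ref{D_imp}) every implementation is determined by the single vector $z=\tilde D_{\beta,\tau_0}(E_0)$, so the whole task is to identify $z$. Recall from the proof of Proposition \ref{GNS_prop} that $\pi_{\tau_0}(a)E_0=0$ precisely when $a_n(0)=0$ for all $n$, where $a=\sum_nU^na_n(\K)$; this is the description of $\mathcal{A}_{\tau_0}$ that enters condition (\ref{z_condition}).

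First I would record the relevant unitaries: since $\rho_\theta(U^nP_0)=e^{in\theta}U^nP_0$ we get $U_{\tau_0,\theta}E_n=e^{in\theta}E_n$, exactly as in the invariant $\tau_0$ computation. The covariance clause in the definition of an implementation, $U_{\tau_0,\theta}\tilde D_{\beta,\tau_0}U_{\tau_0,\theta}^{-1}=e^{i\theta}\tilde D_{\beta,\tau_0}$, evaluated on $E_0$ forces $\tilde D_{\beta,\tau_0}(E_0)$ to be an eigenvector of every $U_{\tau_0,\theta}$ with eigenvalue $e^{i\theta}$, hence $z=\tilde D_{\beta,\tau_0}(E_0)=c\,E_1$ for some constant $c$. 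To pin down $c$ I would use condition (\ref{z_condition}): for $a$ with $\pi_{\tau_0}(a)E_0=0$, writing $\tilde d_\beta(a)=[U\beta(\K),a]$ and using $aE_0=0$ together with $\beta(\K)E_0=\beta(0)E_0$ gives $\pi_{\tau_0}(\tilde d_\beta(a))E_0=-\beta(0)\,\pi_{\tau_0}(a)E_1$. Thus (\ref{z_condition}) reads $(c-\beta(0))\,\pi_{\tau_0}(a)E_1=0$ for all such $a$, and choosing $a=P_1$ (which annihilates $E_0$ but fixes $E_1$) forces $c=\beta(0)$; conversely this choice manifestly satisfies (\ref{z_condition}). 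Finally, feeding $z=\beta(0)E_1$ into (\ref{D_imp}) on the generators $a=U^n$ and using (\ref{CommRel}) in the form $\tilde d_\beta(U^n)=U^{n+1}(\beta(\K+n)-\beta(\K))$, one computes $\tilde D_{\beta,\tau_0}E_n=(\beta(n)-\beta(0))E_{n+1}+\beta(0)E_{n+1}=\beta(n)E_{n+1}$, which by linearity is precisely multiplication by $U\beta(\K)$ on $\mathcal{D}_{\tau_0}$.

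The one point that needs care — and the reason this statement is not a verbatim copy of its invariant analogue — is that the function $\beta$ produced by Proposition \ref{covar_der_rep} is uniquely determined, with no freedom to normalize $\beta(0)=0$. Consequently $\tilde d_\beta$ does \emph{not} annihilate $E_0$ on $\mathcal{A}_{\tau_0}$, and the bare commutator condition alone would leave the one-parameter family $\tilde D=U\beta(\K)+w_0I$; it is precisely the covariance requirement that eliminates this diagonal ambiguity and forces $c=\beta(0)$. So the main thing to watch is the bookkeeping of this additive constant and making sure both clauses of the definition of an implementation of a covariant derivation are invoked.
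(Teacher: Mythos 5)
Your argument is correct and is exactly the ``simple modification of the invariant $\tau_0$ argument'' that the paper invokes when it states this proposition without proof: identify $z=\tilde D_{\beta,\tau_0}(E_0)$ via the general implementation criterion, use covariance to force $z=cE_1$, use condition (\ref{z_condition}) (e.g.\ with $a=P_1$) to force $c=\beta(0)$, and then evaluate formula (\ref{D_imp}) on $U^n$. Your closing remark is also the right point to flag: since $\beta$ is unique in Proposition \ref{covar_der_rep} (no normalization $\beta(0)=0$), the term $\pi_{\tau_0}(\tilde d_\beta(a))E_0=-\beta(0)\pi_{\tau_0}(a)E_1$ does not vanish on $\mathcal{A}_{\tau_0}$, and it is the covariance clause that removes the residual $w_0I$ freedom and yields the stated uniqueness.
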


\begin{prop}
There exists a number $c$ such that any implementation $\tilde D_{\beta,\tau_{\pm\infty}} : \mathcal{D}_{\tau_{\pm\infty}} \to L^2(S^1)$ of $\tilde d_\beta$ is of the form:
\begin{equation*}
\tilde D_{\beta,\tau_\infty} = e^{ix}\left(\beta_{\pm\infty}\frac{1}{i}\frac{d}{dx} + c\right).
\end{equation*}
\end{prop}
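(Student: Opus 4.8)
The plan is to mimic the proof of the corresponding statement for the invariant implementation $D_{\beta,\tau_{\pm\infty}}$, combining the general form (\ref{D_imp}) of an implementation with the covariance constraint. First I would recall from the previous arguments that $(U_{\tau_{\pm\infty},\theta}f)(x) = f(x+\theta)$ and that the cyclic vector $x_0 = [I]$ is the constant function $1$, which is fixed by every $U_{\tau_{\pm\infty},\theta}$. Applying the covariance relation $U_{\tau_{\pm\infty},\theta}\,\tilde D_{\beta,\tau_{\pm\infty}}\,U_{\tau_{\pm\infty},\theta}^{-1} = e^{i\theta}\tilde D_{\beta,\tau_{\pm\infty}}$ to $1$ gives $(U_{\tau_{\pm\infty},\theta}z)(x) = e^{i\theta}z(x)$, i.e. $z(x+\theta) = e^{i\theta}z(x)$ for all $\theta$; putting $x=0$ forces $z(x) = c\,e^{ix}$ with $c := z(0)$. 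So the value of any covariant implementation on the cyclic vector is determined up to the scalar $c$.

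Next I would verify that $z = c\,e^{ix}$ satisfies condition (\ref{z_condition}). Writing a general $a\in\mathcal{A}$ as $a=\sum_n U^n a_n(\K)$, one has $\pi_{\tau_{\pm\infty}}(a)1=0$ iff $a_{n,\pm\infty}:=\lim_{k\to\pm\infty}a_n(k)=0$ for all $n$, in which case $\pi_{\tau_{\pm\infty}}(a)=0$ as an operator. A short computation using (\ref{CommRel}) gives $\tilde d_\beta(a)=\sum_n U^{n+1}\bigl(\beta(\K+n)a_n(\K)-a_n(\K+1)\beta(\K)\bigr)$, so the point is to evaluate $\lim_{k\to\pm\infty}\bigl(\beta(k+n)a_n(k)-a_n(k+1)\beta(k)\bigr)$ even though $\beta$ is unbounded. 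Rewriting the coefficient as $(\beta(k+n)-\beta(k))a_n(k)+\beta(k)(a_n(k)-a_n(k+1))$, the first summand tends to $n\beta_{\pm\infty}\cdot 0=0$ because $\beta\in c_{inc}(\Z)$, while the second vanishes identically for large $|k|$ since $a_n$ is eventually constant. Hence $\pi_{\tau_{\pm\infty}}(\tilde d_\beta(a))1=0=-\pi_{\tau_{\pm\infty}}(a)z$, so (\ref{z_condition}) holds for every $c$, and in particular an implementation with $\tilde D_{\beta,\tau_{\pm\infty}}(1)=c\,e^{ix}$ exists.

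Finally I would use (\ref{D_imp}) on the spanning set $\{e^{inx}=\pi_{\tau_{\pm\infty}}(U^n)1\}$ of $\mathcal{D}_{\tau_{\pm\infty}}$ (the trigonometric polynomials). Since $\tilde d_\beta(U^n)=[U\beta(\K),U^n]=U^{n+1}(\beta(\K+n)-\beta(\K))$ and $\lim_{k\to\pm\infty}(\beta(k+n)-\beta(k))=n\beta_{\pm\infty}$, formula (\ref{D_imp}) yields $\tilde D_{\beta,\tau_{\pm\infty}}(e^{inx})=n\beta_{\pm\infty}e^{i(n+1)x}+c\,e^{i(n+1)x}=e^{ix}(n\beta_{\pm\infty}+c)e^{inx}=e^{ix}\bigl(\beta_{\pm\infty}\frac{1}{i}\frac{d}{dx}+c\bigr)e^{inx}$, which by linearity is the asserted formula. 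The only genuinely delicate step is the limit computation in the verification of (\ref{z_condition}): one must not treat $\beta(k+n)a_n(k)$ as ``$\infty\cdot 0$'' but instead peel off the bounded increment $\beta(k+n)-\beta(k)$ as above; everything else is a routine transcription of the invariant case.
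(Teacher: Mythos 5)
Your proof is correct and is exactly the ``simple modification of the invariant case'' that the paper invokes when it states this proposition without proof: covariance of $\tilde D$ forces $\tilde D(1)=c\,e^{ix}$, condition (\ref{z_condition}) is checked on $\mathcal{A}_{\tau_{\pm\infty}}$, and formula (\ref{D_imp}) is evaluated on the spanning set $e^{inx}$. Your careful peeling off of the bounded increment $\beta(k+n)-\beta(k)$ in the verification of (\ref{z_condition}) is a welcome extra precision over the paper's terser treatment of the analogous step in the invariant case.
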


\section{Compactness of Parametrices}
\subsection{Spectral triples}

We say that a closed operator $D$ has compact parametrices if the operators $(1+D^*D)^{-1/2}$ and $(1+DD^*)^{-1/2}$ are compact.
Other equivalent formulations were summarized in the appendix of \cite{KMRSW1}.  In what follows we analyze when do the implementations of the derivations in $A$  have compact parametrices. The cases below are very similar to the examples in \cite{KMRSW1}. Consequently, all those cases give even spectral triples for the algebra $A$. 

We use the same notation for the {\it closure} of the operators constructed in the previous section. 

\begin{prop}
The operators $D_{\beta,\tau_0}$, $\tilde D_{\beta,\tau_0}$ have compact parametrices if and only if $\beta(k)\to\infty$ as $|k|\to\infty$.
\end{prop}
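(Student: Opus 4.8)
The plan is to compute $D^*D$ and $DD^*$ explicitly for each operator and observe that both are diagonal in the canonical basis $\{E_n\}$ of $\ell^2(\Z)$, so that compactness of the parametrices reduces to a statement about the diagonal entries diverging to $+\infty$.

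First I would treat $\tilde D_{\beta,\tau_0}$, which is the cleanest case. From Proposition stating $\tilde D_{\beta,\tau_0}E_n = \beta(n)E_{n+1}$, one sees immediately that $\tilde D_{\beta,\tau_0}$ acts as a weighted shift, so $\tilde D_{\beta,\tau_0}^*\tilde D_{\beta,\tau_0}E_n = |\beta(n)|^2 E_n$ and $\tilde D_{\beta,\tau_0}\tilde D_{\beta,\tau_0}^*E_n = |\beta(n-1)|^2 E_n$. Hence $(1+\tilde D^*\tilde D)^{-1/2}E_n = (1+|\beta(n)|^2)^{-1/2}E_n$ and similarly for $\tilde D\tilde D^*$. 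A diagonal operator on $\ell^2(\Z)$ is compact if and only if its diagonal entries tend to $0$ as $|n|\to\infty$, which happens exactly when $|\beta(n)|\to\infty$ as $|n|\to\infty$; since $\beta(k)\to\infty$ is the stated hypothesis (understood as $|\beta(k)|\to\infty$), this gives the equivalence. For $D_{\beta,\tau_0}$, the earlier proposition gives $D_{\beta,\tau_0}E_n = (\beta(n)+c)E_n$, so $D_{\beta,\tau_0}$ is itself diagonal; then $(1+D^*D)^{-1/2}E_n = (1+|\beta(n)+c|^2)^{-1/2}E_n$ and likewise $(1+DD^*)^{-1/2}$ has the same entries. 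Again compactness is equivalent to $|\beta(n)+c|\to\infty$, and since $c$ is a fixed constant this is equivalent to $|\beta(n)|\to\infty$.

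The one technical point to address is the passage to the \emph{closure}: the propositions construct $D_{\beta,\tau_0}$ and $\tilde D_{\beta,\tau_0}$ on the dense domain $\mathcal{D}_{\tau_0} = \operatorname{span}\{E_n\}$, and the statement refers to the closure. Here I would note that a diagonal (resp. weighted-shift) operator with these symbols is essentially self-adjoint (resp. essentially normal) on finitely supported vectors — this is standard, since $\{E_n\}$ is a core — so the closure has the obvious domain $\{f : \sum_n (1+|\beta(n)|^2)|f_n|^2 < \infty\}$ and $D^*D$, $DD^*$ are the diagonal operators computed above on their natural domains. Then the functional calculus giving the formulas for $(1+D^*D)^{-1/2}$ and $(1+DD^*)^{-1/2}$ is unambiguous.

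I do not expect a genuine obstacle in this proof; the content is entirely in the explicit diagonalization already carried out in Section 5, and the only thing requiring a sentence of care is the core/closure bookkeeping just described. The direction ``compact $\Rightarrow$ $|\beta(k)|\to\infty$'' follows by contraposition: if $|\beta(k)|$ does not diverge, some subsequence $|\beta(n_j)|$ stays bounded, and then $(1+|\beta(n_j)|^2)^{-1/2}$ stays bounded away from $0$ along an infinite orthonormal set, contradicting compactness of the parametrix.
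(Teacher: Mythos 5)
Your proof is correct and follows essentially the same route as the paper: both rest on the explicit diagonalization from Section 5, with $D_{\beta,\tau_0}$ diagonal with entries $\beta(n)+c$ and $\tilde D_{\beta,\tau_0}$ the corresponding weighted shift, so that compactness of the parametrices reduces to divergence of $|\beta(n)|$. The only difference is that you spell out the $D^*D$, $DD^*$ computations and the closure/core bookkeeping, which the paper leaves implicit.
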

\begin{proof} The operators $D_{\beta,\tau_0}$ are diagonal with eigenvalues $\beta(k)-\beta(0) + c$, which must go to infinity for the operators to have compact parametrices. The operators $\tilde D_{\beta,\tau_0}$ differ from the operators $D_{\beta,\tau_0}$ by a shift, so they behave in the same way.
\end{proof}

\begin{prop}
The operators $D_{\beta,\tau_\infty}$, $\tilde D_{\beta,\tau_\infty}$ have compact parametrices if and only if $\beta_{\pm\infty}\ne 0$.
\end{prop}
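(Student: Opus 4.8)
The plan is to reduce everything to the explicit diagonal form of $D_{\beta,\tau_{\pm\infty}}$ obtained in the preceding proposition and to the observation that $\tilde D_{\beta,\tau_\infty}$ is just $D_{\beta,\tau_\infty}$ left-multiplied by the unitary ``multiplication by $e^{ix}$''. Recall that ``compact parametrices'' refers to the closed operators, so all the operators below denote closures.

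\medskip\noindent\textbf{Step 1: the diagonalization.} By the previous proposition, the closure of $D_{\beta,\tau_{\pm\infty}}$ acts on the orthonormal Fourier basis $\{e^{inx}\}_{n\in\Z}$ of $L^2(S^1)$ by $D_{\beta,\tau_{\pm\infty}}e^{inx}=\lambda_n e^{inx}$ with $\lambda_n:=n\beta_{\pm\infty}+c$. Thus $D_{\beta,\tau_{\pm\infty}}$ is a normal operator whose domain is $\{f=\sum_n f_n e^{inx}:\sum_n(1+|\lambda_n|^2)|f_n|^2<\infty\}$, and $D^*D=DD^*$ is the diagonal operator with entries $|\lambda_n|^2$. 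Hence $(1+D^*D)^{-1/2}=(1+DD^*)^{-1/2}$ is the diagonal operator with entries $(1+|\lambda_n|^2)^{-1/2}$. A diagonal operator is compact precisely when its diagonal entries converge to $0$, so both parametrices of $D_{\beta,\tau_{\pm\infty}}$ are compact if and only if $|\lambda_n|=|n\beta_{\pm\infty}+c|\to\infty$ as $|n|\to\infty$. If $\beta_{\pm\infty}\ne0$ this holds, whereas if $\beta_{\pm\infty}=0$ then $\lambda_n\equiv c$ and $(1+|c|^2)^{-1/2}I$ is not compact. This settles the claim for $D_{\beta,\tau_{\pm\infty}}$.

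\medskip\noindent\textbf{Step 2: passing to the tilde operator.} Write $\tilde D_{\beta,\tau_\infty}=V D_{\beta,\tau_\infty}$, where $V$ denotes multiplication by $e^{ix}$, a unitary on $L^2(S^1)$. Since $V$ is bounded with bounded inverse, $V D_{\beta,\tau_\infty}$ is already closed and equals the closure of $V$ applied to the operator on trigonometric polynomials, and $(\tilde D_{\beta,\tau_\infty})^{*}=D_{\beta,\tau_\infty}^{*}V^{*}$. Therefore $\tilde D^{*}\tilde D=D^{*}V^{*}VD=D^{*}D$, so $(1+\tilde D^{*}\tilde D)^{-1/2}=(1+D^{*}D)^{-1/2}$, and $\tilde D\tilde D^{*}=V(DD^{*})V^{*}$, so $(1+\tilde D\tilde D^{*})^{-1/2}=V(1+DD^{*})^{-1/2}V^{*}$, which is compact exactly when $(1+DD^{*})^{-1/2}$ is. Hence $\tilde D_{\beta,\tau_\infty}$ has compact parametrices under precisely the same condition $\beta_{\pm\infty}\ne0$, completing the proof.

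\medskip The argument is elementary once the diagonalization of the previous proposition is in hand; the only point needing a little care is the bookkeeping about closures, namely verifying that $(\tilde D)^{*}=D^{*}V^{*}$, $\tilde D\tilde D^{*}=V DD^{*}V^{*}$ and the identity $(1+\tilde D\tilde D^{*})^{-1/2}=V(1+DD^{*})^{-1/2}V^{*}$ hold at the level of the closed operators and not merely on the core of trigonometric polynomials. This is the main (and mild) obstacle, and it follows from standard functional calculus because conjugation by a unitary (and, here, left multiplication by a bounded invertible operator) commutes with taking closures and adjoints.
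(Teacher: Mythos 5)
Your proposal is correct and follows essentially the same route as the paper: diagonalize $D_{\beta,\tau_{\pm\infty}}$ in the Fourier basis with eigenvalues $n\beta_{\pm\infty}+c$, note these tend to infinity exactly when $\beta_{\pm\infty}\ne 0$, and reduce $\tilde D_{\beta,\tau_{\pm\infty}}$ to $D_{\beta,\tau_{\pm\infty}}$ via the unitary multiplication by $e^{ix}$. Your write-up simply makes explicit the closure and adjoint bookkeeping that the paper's one-line proof leaves implicit.
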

\begin{proof} Similarly to the proof of the proposition above, the operators $D_{\beta,\tau_\infty}$ are diagonal with eigenvalues $\pm\beta_{\pm\infty} n+c$, which go to infinity if and only if $\beta_{\pm\infty}\ne 0$.
\end{proof}

\begin{prop}
The operators $D_{\beta,\tau_w}$ have compact parametrices if and only if\begin{equation*}
\beta(k+n)-\alpha(k)\to\infty
\end{equation*}
as $|n|,|k|\,\to\infty$.
\end{prop}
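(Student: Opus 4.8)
The plan is to diagonalize $D_{\beta,\tau_w}$ in a natural orthonormal basis of $H_{\tau_w}$ and then invoke the standard criterion for compactness of the parametrices of a normal operator. First I would introduce the vectors $e_{n,k}:=w(k)^{-1/2}\,U^n\delta_k(\K)$ for $n,k\in\Z$, where $\delta_k(j)=1$ if $j=k$ and $0$ otherwise. Formula (\ref{w_inner_prod}) for the inner product shows at once that $\{e_{n,k}\}_{n,k\in\Z}$ is an orthonormal basis of $H_{\tau_w}$. Since $\delta_k\in c_{00}^\pm(\Z)$ we have $U^n\delta_k(\K)\in\mathcal{A}$, so each $e_{n,k}$ lies in $\mathcal{D}_{\tau_w}=\pi_{\tau_w}(\mathcal{A})\cdot[I]$, and hence the span of $\{e_{n,k}\}$ is a dense subspace of the domain of $D_{\beta,\tau_w}$.

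Next I would use the commutation relation $\beta(\K)U^n=U^n\beta(\K+n)$ coming from (\ref{CommRel}), together with the formula $D_{\beta,\tau_w}a=\beta(\K)a-a\alpha(\K)$, to compute
\begin{equation*}
D_{\beta,\tau_w}e_{n,k}=\bigl(\beta(k+n)-\alpha(k)\bigr)e_{n,k}.
\end{equation*}
Thus $D_{\beta,\tau_w}$ acts diagonally in the basis $\{e_{n,k}\}$ with eigenvalues $\lambda_{n,k}:=\beta(k+n)-\alpha(k)$, and the same diagonal formula (read off the Fourier components of $a$) describes its action on all of $\mathcal{D}_{\tau_w}$. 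To identify its closure with the maximal diagonal operator $D_{\max}$ having these eigenvalues, I would check that $\mathcal{D}_{\tau_w}\subset\mathrm{dom}(D_{\max})$: for fixed $n$ the increments $\beta(k+n)-\beta(k)=\sum_{j=k+1}^{k+n}(\beta(j)-\beta(j-1))$ are bounded in $k$ because $\beta\in c_{inc}(\Z)$, and combined with the hypothesis $\sum_k|\beta(k)-\alpha(k)|^2w(k)<\infty$ this gives $\sum_k|\lambda_{n,k}|^2 w(k)<\infty$ for each $n$, which suffices since each $a\in\mathcal{A}$ has only finitely many nonzero, bounded Fourier components $a_n(\K)$. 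As the span of $\{e_{n,k}\}$ is a core for $D_{\max}$ and is contained in $\mathcal{D}_{\tau_w}$, the closure of $D_{\beta,\tau_w}$ is exactly $D_{\max}$, a normal operator.

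Finally, for the normal diagonal operator $D_{\max}$ the operators $(1+D_{\max}^*D_{\max})^{-1/2}$ and $(1+D_{\max}D_{\max}^*)^{-1/2}$ both coincide with the diagonal operator with entries $(1+|\lambda_{n,k}|^2)^{-1/2}$, and such a diagonal operator is compact if and only if its entries tend to $0$, equivalently if and only if for every $M>0$ only finitely many pairs $(n,k)\in\Z^2$ satisfy $|\beta(k+n)-\alpha(k)|\le M$. This is the sense in which the double limit ``$\beta(k+n)-\alpha(k)\to\infty$ as $|n|,|k|\to\infty$'' in the statement is to be read (requiring only that both $|n|$ and $|k|$ be large would not suffice, as the eigenvalues along $n=0$ could then remain bounded), and it yields the claimed equivalence. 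I do not expect a genuine obstacle here; the only points that need care are the identification of the closure of $D_{\beta,\tau_w}$ with $D_{\max}$ --- which relies on the boundedness of the $\beta$-increments, hence on $\beta\in c_{inc}(\Z)$ --- and spelling out the precise meaning of the double limit so that it matches ``only finitely many eigenvalues in any bounded set.''
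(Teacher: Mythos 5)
Your proposal is correct and follows essentially the same route as the paper: diagonalize $D_{\beta,\tau_w}$ via the Fourier decomposition $f=\sum_{n}U^n f_n(\K)$, read off the eigenvalues $\beta(k+n)-\alpha(k)$, and apply the standard criterion that a diagonal operator has compact parametrices if and only if only finitely many eigenvalues lie in any bounded set. Your extra care in identifying the closure with the maximal diagonal operator and in pinning down the precise meaning of the double limit is welcome detail that the paper's terse proof leaves implicit.
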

\begin{proof}
The operators $D_{\beta,\tau_w}$ can be diagonalized using the Fourier series:
\begin{equation*}
f = \sum_{n\in\Z}U^n f_n(\K).
\end{equation*}
Computing $D_{\beta,\tau_w}f = \beta(\K)f-f\alpha(\K)$ we get:
\begin{equation*}
D_{\beta,\tau_w}f = \sum_{n\in\Z}U^n (\beta(\K+n)-\alpha(\K))f_n(\K).
\end{equation*}
It follows that the numbers $\beta(k+n)-\alpha(k)$ are the eigenvalues of the diagonal operator, and must diverge for the operator to have compact parametrices.
\end{proof}

\subsection{Covariant derivations and normal states}
Here we study the parametrices of the operators which implement  $\rho_\theta$-covariant derivations in GNS Hilbert spaces $H_{\tau_w}$ corresponding to faithful normal states. In this section, like in \cite{KMRSW1}, we use a different notation for $\tilde D_{\beta,\tau_w}$; we use instead:
\begin{equation*}
D_{\beta,\alpha,w}f = U\beta(\K)f - fU\alpha(\K),
\end{equation*}
a notation that indicates the coefficients of the operator.
Denote by  $D_{\beta,\alpha,w}^{max}$ the closure of $D_{\beta,\alpha,w}$ defined on $\mathcal{D}^{max}_{\tau }= \pi_{\tau }(\mathcal{A})\cdot[I]$.

Define the $*-$algebra:
\begin{equation*}
\mathcal{A}_0 = \left\{a = \sum U^n a_n(\K) \ : \ a_n(k)\in {c_{00}^\pm(\Z)}\right\},
\end{equation*}
where $c_{00}^\pm(\Z)$ are the sequences with compact support, i.e. eventually zero,
and let $D_{\beta,\alpha,w}^{min}$ be the closure of $D_{\beta,\alpha,w}$ defined on  $\mathcal{D}^{min}_{\tau } = \pi_{\tau }(\mathcal{A}_0)\cdot[I]$. 
Finally, we use the symbol $D_{\beta,\alpha,w}$ for any closed operator in $H_{\tau_w}$ such that:
$$D_{\beta,\alpha,w}^{min}\subset D_{\beta,\alpha,w}\subset D_{\beta,\alpha,w}^{max}.$$

The main objective of this section is to prove the following no-go result, analogous to the non-existence theorem in the case for the quantum disk, see \cite{KMRSW1}.

\begin{theo}\label{main_theo}
There is no closed operator $D_{\beta,\alpha,w}$ in $H_{\tau_w}$, $D_{\beta,\alpha,w}^{min}\subset D_{\beta,\alpha,w}\subset D_{\beta,\alpha,w}^{max}$, with $\beta_{\pm\infty}\ne 0$, such that $D_{\beta,\alpha,w}$ has compact parametrices.
\end{theo}

\begin{proof}
The proof proceeds through a number of steps, similar to those in \cite{KMRSW1}. Initially, by a sequence of equivalences, we show that the operator $D_{\beta,\alpha,w}$ has compact parametrices if and only if a simplified version of it has compact parametrices. Since in particular an operator with compact parametrices has to be Fredholm, the finiteness of kernel and cokernel, as well as closedness of the range, imply certain growth estimates on the parameters.  As mentioned before, these growth estimates on the parameters are very different from what occurred in the disk situation.  

First we show that the coefficients $\beta(k)$ can be replaced by its absolute values. We need the following information.

\begin{lem}
Let $\{\beta(k)\}$ be sequence of complex numbers.  If $\beta(k+1)-\beta(k)\to \beta_\infty$ as $k\to\infty$ and $\beta(k)-\beta(k+1)\to \beta_{-\infty}$ as $k\to-\infty$ with $\beta_{\pm\infty}\neq0$, then there exists positive constants $c_1$ and $c_2$, and a nonnegative constant $c_3$ such that:
\begin{equation*}
c_2(|k|+1)-c_3  \le |\beta(k)|\le c_1(|k|+1).
\end{equation*}
Moreover $\left||\beta(k+1)|-|\beta(k)|\right|$ is bounded.
\end{lem}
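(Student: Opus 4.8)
The plan is to establish the two-sided linear bounds on $|\beta(k)|$ by first controlling $\beta(k)$ itself via a telescoping sum of its increments, and then passing to absolute values. Since $\beta(k+1)-\beta(k)\to\beta_\infty$ with $\beta_\infty\neq 0$, the sequence of increments $\{\beta(k)-\beta(k-1)\}_{k\geq 0}$ is convergent, hence bounded, say by $M$; writing $\beta(k)=\beta(0)+\sum_{j=1}^{k}(\beta(j)-\beta(j-1))$ immediately gives $|\beta(k)|\leq |\beta(0)|+Mk$ for $k\geq 0$, and symmetrically for $k<0$ using the $\beta(k)-\beta(k+1)\to\beta_{-\infty}$ hypothesis. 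Combining the two and absorbing constants yields the upper bound $|\beta(k)|\leq c_1(|k|+1)$ for a suitable $c_1>0$.

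For the lower bound, the idea is that because the increments converge to a nonzero limit, for $|k|$ large the real part of $\beta(k)$ (after possibly multiplying by a unimodular constant, or more directly using that $\beta(k)/k\to\beta_\infty$) grows linearly. Concretely, choose $N$ so large that $|\beta(j)-\beta(j-1)-\beta_\infty|<|\beta_\infty|/2$ for $j\geq N$; then for $k\geq N$ the telescoping sum gives $|\beta(k)-\beta(N)|\geq \sum_{j=N+1}^{k}\Re\!\big(e^{-i\arg\beta_\infty}(\beta(j)-\beta(j-1))\big)\geq \tfrac{|\beta_\infty|}{2}(k-N)$, hence $|\beta(k)|\geq \tfrac{|\beta_\infty|}{2}(k-N)-|\beta(N)|$. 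A parallel estimate holds as $k\to-\infty$ with $\beta_{-\infty}$. Taking $c_2=\tfrac{1}{2}\min(|\beta_\infty|,|\beta_{-\infty}|)$ times a harmless adjustment, and choosing $c_3$ large enough to dominate the finitely many exceptional indices $|k|\leq N$ as well as the additive constants $|\beta(\pm N)|$ and the shift, produces $c_2(|k|+1)-c_3\leq |\beta(k)|$ for all $k\in\Z$; here $c_3\geq 0$ is forced because $|\beta(k)|$ could be small (even zero) for small $|k|$.

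Finally, the boundedness of $\big||\beta(k+1)|-|\beta(k)|\big|$ follows from the reverse triangle inequality: $\big||\beta(k+1)|-|\beta(k)|\big|\leq |\beta(k+1)-\beta(k)|$, and the right-hand side is bounded over all $k\in\Z$ since the increments converge both as $k\to+\infty$ and as $k\to-\infty$ (hence form a bounded bi-infinite sequence). I do not expect any serious obstacle here; the only mild subtlety is bookkeeping the exceptional range $|k|\leq N$ and the various additive constants so that a single nonnegative $c_3$ works uniformly, and making sure the lower-bound argument is phrased so it does not secretly require $\beta_\infty=\beta_{-\infty}$ — using $\arg\beta_\infty$ on the right tail and $\arg\beta_{-\infty}$ on the left tail separately handles this cleanly.
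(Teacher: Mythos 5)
Your proof is correct, but it takes a genuinely different route from the paper's. The paper decomposes $\beta(k)=\beta_{\infty}(k+1)+\beta_0(k)$ for $k\ge 0$ (and analogously for $k<0$), where the increments $\psi_0(k)=\beta_0(k)-\beta_0(k-1)$ tend to zero, and then runs a Ces\`aro-type averaging argument (splitting the sum $\frac{1}{k+1}\sum_{j=0}^{k}\psi_0(j)$ at a threshold $j_\varepsilon$) to conclude $\beta_0(k)=o(|k|)$, hence the sharper asymptotic $\beta(k)=\beta_{\pm\infty}(|k|+1)(1+o(1))$, from which both linear bounds follow. You instead telescope the increments directly: the upper bound comes from boundedness of the increments, and the lower bound from the observation that $|\beta(k)-\beta(N)|\ge\Re\bigl(e^{-i\arg\beta_\infty}\sum_{j=N+1}^{k}(\beta(j)-\beta(j-1))\bigr)\ge\tfrac{|\beta_\infty|}{2}(k-N)$ once the increments are within $|\beta_\infty|/2$ of their limit — a valid use of $|z|\ge\Re(e^{-i\theta}z)$, applied with separate phases $\arg\beta_{\pm\infty}$ on the two tails. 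Your bookkeeping of $c_2$, $c_3$, and the exceptional range $|k|\le N$ is sound, and the final claim via the reverse triangle inequality is identical in both treatments. The trade-off: the paper's argument yields the stronger statement that $\beta(k)/(|k|+1)$ actually converges to $\beta_{\pm\infty}$ (which is conceptually cleaner and reused implicitly in spirit elsewhere), while yours is more elementary and avoids the averaging step entirely, extracting only the two-sided linear bounds that the lemma actually asserts.
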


\begin{proof}
We prove first that: 
$$\beta(k) = \beta_{\pm\infty}(|k|+1)(1 + o(1))$$ as $k\to\pm\infty.$ 
From this the first inequality follows immediately.  We decompose $\beta(k)$ as follows:
\begin{equation*}
\beta(k)=\left\{
\begin{aligned}
&\beta_\infty\cdot(k+1)+\beta_0(k) &&\textrm{ for }k\ge 0\\
&\beta_{-\infty}\cdot k + \beta_1(k) &&\textrm{ for }k<0,
\end{aligned}\right.
\end{equation*}
so that 
$$\beta_0(k)-\beta_0(k-1)=:\psi_0(k)\to 0$$ 
as $k\to\infty$ and 
$$\beta_1(k)-\beta_1(k+1) =:\psi_1(k)\to 0$$ 
as $k\to-\infty$. We want to show that:
\begin{equation*}
\frac{\beta_0(k)}{|k|+1}\to 0 \quad\textrm{and}\quad \frac{\beta_1(k)}{|k|+1}\to 0
\end{equation*}
as $k\to\pm\infty$ respectively.  We study $k\ge0$ first, $k<0$ is similar.  Notice that we have:
 \begin{equation*}
\frac{\beta_0(k)}{|k|+1} = \frac{\beta_0(k)-\beta_0(-1)}{k+1} + \frac{\beta_0(-1)}{k+1} = \frac{1}{k+1}\sum_{j=0}^k\psi_0(j) + \frac{\beta_0(-1)}{k+1}.
\end{equation*}
 It is clear that the second term above goes to $0$ as $k\to\infty$. 
For the first term, given $\varepsilon>0$ we choose $j_\varepsilon$ so that $|\psi_0(j)|<\varepsilon$ for $j\ge j_\varepsilon$. Then we split the sum as follows:
\begin{equation*}
\frac{1}{k+1}\sum_{j=0}^k|\psi_0(j)|=\frac{1}{k+1}\sum_{j=0}^{j_\varepsilon -1}|\psi_0(j)| + \frac{1}{k+1}\sum_{j=j_\varepsilon}^k|\psi_0(j)| \le \frac{1}{k+1}\sum_{j=0}^{j_\varepsilon -1}|\psi_0(j)| + \varepsilon,
\end{equation*}
and then choose $k_\varepsilon$ so that: 
$$\frac{(\sup_l|\psi_0(l)|)\,j_\varepsilon}{k+1}< \varepsilon$$ 
for $k\ge k_\varepsilon$. Therefore it follows that: 
$$\frac{\beta_0(k)}{|k|+1}=o(1).$$

The second part of the lemma follows from the estimate:
\begin{equation*}
\left||\beta(k+1)| - |\beta(k)|\right|\le |\beta(k+1) - \beta(k)| < \infty.
\end{equation*}
\end{proof}

\begin{lem}
Under the assumption of Theorem \ref{main_theo} the operator $D_{\beta,\alpha,w}$ has compact parametrices if and only if the operator $D_{|\beta|,\alpha,w}$, satisfying: 
$$D_{|\beta|,\alpha,w}^{min}\subset D_{|\beta|,\alpha,w}\subset D_{|\beta|,\alpha,w}^{max},$$ 
has compact parametrices.
\end{lem}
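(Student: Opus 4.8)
The plan is to conjugate $D_{\beta,\alpha,w}$ by a diagonal unitary that strips the phase of $\beta$, landing on an operator of the form $D_{|\beta|,\alpha',w}$, and then to use that compactness of parametrices is invariant under unitary conjugation.

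First I would construct the unitary. By the previous lemma $\beta(k)\neq0$ for all large $|k|$, so setting $\vartheta(k):=\arg\beta(k)$ where $\beta(k)\neq0$ and $\vartheta(k):=0$ at the remaining finitely many points, I can define $\phi:\Z\to\R$ by $\phi(0)=0$ and $\phi(k+1)-\phi(k)=-\vartheta(k)$. Put $u:=e^{i\phi(\K)}$, a diagonal unitary, and let $W$ act on $H_{\tau_w}$ by $Wf:=ufu^{-1}$. Since $u$ and $w(\K)$ are diagonal, $W$ is a composition of a left and a right multiplication by unitaries, hence a unitary operator of $H_{\tau_w}$, with $(Wf)_n(k)=e^{i(\phi(k+n)-\phi(k))}f_n(k)$. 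Using $a(\K)U=Ua(\K+1)$ one gets $uU\beta(\K)u^{-1}=U|\beta|(\K)$ and $uU\alpha(\K)u^{-1}=U\alpha'(\K)$ with $\alpha'(k):=e^{-i\vartheta(k)}\alpha(k)$, so $WD_{\beta,\alpha,w}W^{-1}=D_{|\beta|,\alpha',w}$ on the algebraic domain. Since $\bigl||\beta(k)|-\alpha'(k)\bigr|=|\alpha(k)-\beta(k)|$, we have $\sum_k w(k)\,\bigl||\beta(k)|-\alpha'(k)\bigr|^2<\infty$, so $\alpha'$ is an admissible coefficient for $\tilde d_{|\beta|}$ and, using that $\bigl||\beta(k+1)|-|\beta(k)|\bigr|$ is bounded, the associated minimal and maximal operators $D^{min}_{|\beta|,\alpha',w}\subset D_{|\beta|,\alpha',w}\subset D^{max}_{|\beta|,\alpha',w}$ are defined; this $D_{|\beta|,\alpha',w}$ is what the statement denotes $D_{|\beta|,\alpha,w}$ (the precise admissible coefficient being immaterial).

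Next I would check that $W$ respects the boundary conditions. It obviously preserves the set of elements with coefficient sequences finitely supported in both indices, so $WD^{min}_{\beta,\alpha,w}W^{-1}=D^{min}_{|\beta|,\alpha',w}$. For the maximal end, the factors $\mu_n(k):=e^{i(\phi(k+n)-\phi(k))}$ are bounded and converge as $k\to\pm\infty$ (to $e^{-in\arg\beta_{\pm\infty}}$, since $\arg\beta(k)\to\arg\beta_{\pm\infty}$); approximating each $\mu_n$ by its truncations in $c_{00}^\pm(\Z)$ and using $\sum_k w(k)<\infty$ together with $|\beta(k)|\le c_1(|k|+1)$ and the admissibility of $\alpha'$, one shows $W\mathcal{A}$ and $\mathcal{A}$ are both graph-cores for $D_{|\beta|,\alpha',w}$, whence $WD^{max}_{\beta,\alpha,w}W^{-1}=D^{max}_{|\beta|,\alpha',w}$. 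Then $D\mapsto WDW^{-1}$ is a bijection between the closed operators squeezed between the minimal and maximal domains on the two sides, and since $(1+(WDW^{-1})^*(WDW^{-1}))^{-1/2}=W(1+D^*D)^{-1/2}W^{-1}$ (and similarly with $D$ and $D^*$ interchanged), $WDW^{-1}$ has compact parametrices exactly when $D$ does. This finishes the argument.

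The main obstacle I anticipate is precisely the matching of the maximal domains: $W$ does not literally preserve the polynomial algebra $\mathcal{A}$ — the sequences $\mu_n$ are only convergent, not eventually constant — so one must verify that $W\mathcal{A}$ and $\mathcal{A}$ generate the same graph closure of $D_{|\beta|,\alpha',w}$. This is where the growth bounds of the previous lemma and the square-summability $\sum_k w(k)|\beta(k)-\alpha(k)|^2<\infty$ are used, through a somewhat technical truncation estimate; everything else is routine bookkeeping of the kind already carried out in \cite{KMRSW1}.
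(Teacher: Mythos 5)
Your overall strategy --- conjugate by a diagonal phase unitary that strips $\arg\beta(k)$ and use invariance of compact parametrices under unitary equivalence --- is exactly the paper's. The difference is that the paper conjugates by \emph{left} multiplication only, $f\mapsto V(\K)f$ with $V(k+1)/V(k)=e^{-i\arg\beta(k)}$: then the first term becomes $UV(\K+1)\beta(\K)V(\K)^{-1}f=U|\beta|(\K)f$ while the second term $fU\alpha(\K)$ is untouched, so one lands directly on $D_{|\beta|,\alpha,w}$ with the \emph{same} $\alpha$, as the statement requires. Your two-sided conjugation $f\mapsto ufu^{-1}$ also rotates the right coefficient, producing $D_{|\beta|,\alpha',w}$ with $\alpha'(k)=e^{-i\vartheta(k)}\alpha(k)$.

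This is where there is a genuine gap: you dismiss the discrepancy with ``the precise admissible coefficient being immaterial,'' but that is not justified and does not follow from the bounded-perturbation lemma that comes next. The difference of the two operators acts by $f\mapsto -fU(\alpha(\K)-\alpha'(\K))$ with $\alpha(k)-\alpha'(k)=(1-e^{-i\vartheta(k)})\alpha(k)$, and since $\alpha(k)$ grows linearly (it differs from $\beta(k)$ by an $\ell^2_w$ sequence) while $\vartheta(k)\to\arg\beta_{\pm\infty}$ need not vanish, this difference is in general unbounded. So as written you have proved the lemma for $D_{|\beta|,\alpha',w}$, not for $D_{|\beta|,\alpha,w}$. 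The repair is immediate: either drop the right multiplication and conjugate only by $f\mapsto e^{i\phi(\K)}f$ (the paper's route), or follow your $W$ by the right multiplication $f\mapsto fe^{i\phi(\K)}$, which restores $\alpha'$ to $\alpha$; the composite is precisely the left multiplication. Your remaining concern about matching the minimal and maximal domains is legitimate (the multiplier is not eventually constant, so $\mathcal{A}$ is not literally preserved), and the paper glosses over it with one sentence; your truncation argument, or the observation that left multiplication by a diagonal unitary exactly preserves the finitely supported subspace and intertwines the graph norms, covers it.
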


\begin{proof}
Define the unitary operator $V(\K)$ by: 
$$V(k) = \textrm{exp}\left(i\sum_{j=0}^{k-1}\textrm{Arg}(\beta(j))\right),\ k\geq 1, \textrm{ and } V(k) = \textrm{exp}\left(-i\sum_{j=k}^{0}\textrm{Arg}(\beta(j))\right), \ k\leq 0,$$ 
and consider the map $\phi: f\mapsto V(\K)f$ for $f\in\mathcal{H}_{\tau_w}$. This map preserves the domains $\mathcal{D}^{min}_{\tau }$ and $\mathcal{D}^{max}_{\tau }$. A direct computation gives that: 
$$D_{|\beta|,\alpha,w} = \phi D_{\beta,\alpha,w}\phi^{-1}.$$  
This shows that $D_{\beta,\alpha,w}$ and $D_{|\beta|,\alpha,w}$ are unitarily equivalent, thus completing the proof.
\end{proof}

\begin{lem}\label{para_shift}
The operator $D_{\beta,\alpha,w}$  has compact parametrices if and only if for any bounded sequences $\gamma_1(k)$ and $\gamma_2(k)$ the operator $D_{\beta+\gamma_1,\alpha+\gamma_2,w}$ satisfying:
$$D_{\beta+\gamma_1,\alpha+\gamma_2,w}^{min}\subset D_{\beta+\gamma_1,\alpha+\gamma_2,w}\subset D_{\beta+\gamma_1,\alpha+\gamma_2,w}^{max},$$ 
has compact parametrices.
\end{lem}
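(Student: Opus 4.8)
The plan is to realize $D_{\beta+\gamma_1,\alpha+\gamma_2,w}$ as a bounded perturbation of $D_{\beta,\alpha,w}$ and then to invoke the general fact that adding a bounded operator affects neither whether an operator has compact parametrices nor the family of closed extensions lying between a fixed minimal and maximal one. Directly from the definitions one has
$$D_{\beta+\gamma_1,\alpha+\gamma_2,w}f = U(\beta+\gamma_1)(\K)f - fU(\alpha+\gamma_2)(\K) = D_{\beta,\alpha,w}f + Bf, \qquad Bf := U\gamma_1(\K)f - fU\gamma_2(\K).$$
The first step is to check that $B$ extends to a bounded operator on $H_{\tau_w}$. Expanding $f=\sum_n U^n f_n(\K)$ and repeatedly using the commutation relation (\ref{CommRel}) gives $(U\gamma_1(\K)f)_m(k) = \gamma_1(k+m-1)f_{m-1}(k)$ and $(fU\gamma_2(\K))_m(k) = \gamma_2(k)f_{m-1}(k+1)$; the first term has $H_{\tau_w}$-norm at most $\|\gamma_1\|_\infty\|f\|_{\tau_w}$ since $U\gamma_1(\K)$ is a bounded operator on $\ell^2(\Z)$, and the second is estimated directly from the weighted norm $\|fU\gamma_2(\K)\|_{\tau_w}^2 = \sum_{m,k} w(k)|\gamma_2(k)|^2|f_{m-1}(k+1)|^2$ together with $\gamma_2\in\ell^\infty(\Z)$. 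Hence $D_{\beta+\gamma_1,\alpha+\gamma_2,w} = D_{\beta,\alpha,w}+B$ with $B$ bounded and everywhere defined.

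Second, I would record the perturbation principle: if $T$ is closed and $B$ is bounded then $T+B$ on $\textrm{dom}(T)$ is closed, $(T+B)^* = T^*+B^*$, and the graph norms of $T$ and $T+B$ (respectively of $T^*$ and $(T+B)^*$) are equivalent; since $(1+T^*T)^{-1/2}$ is compact precisely when the inclusion of $(\textrm{dom}(T),\ \|\cdot\|_{\textrm{graph}})$ into $H_{\tau_w}$ is compact, $T$ has compact parametrices if and only if $T+B$ does. It remains to match the two families of admissible extensions. Here I would use that the domains $\mathcal{D}^{min}_{\tau_w}=\pi_{\tau_w}(\mathcal{A}_0)\cdot[I]$ and $\mathcal{D}^{max}_{\tau_w}=\pi_{\tau_w}(\mathcal{A})\cdot[I]$ depend only on $w$, not on the coefficients, so that, $B$ being bounded, $D^{min}_{\beta+\gamma_1,\alpha+\gamma_2,w}=D^{min}_{\beta,\alpha,w}+B$ and $D^{max}_{\beta+\gamma_1,\alpha+\gamma_2,w}=D^{max}_{\beta,\alpha,w}+B$. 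Therefore $D\mapsto D+B$ is a bijection between the closed operators squeezed between $D^{min}_{\beta,\alpha,w}$ and $D^{max}_{\beta,\alpha,w}$ and those squeezed between $D^{min}_{\beta+\gamma_1,\alpha+\gamma_2,w}$ and $D^{max}_{\beta+\gamma_1,\alpha+\gamma_2,w}$, and by the previous paragraph it preserves compactness of parametrices; both implications of the lemma follow.

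The step I expect to be the main obstacle is the boundedness of $B$, specifically of the right-multiplication piece $f\mapsto fU\gamma_2(\K)$: right multiplication by $U$ alone is not bounded on $H_{\tau_w}$ (the state $\tau_w$ is not a trace, so the right regular representation need not be bounded), so one cannot simply cite boundedness of $U\gamma_2(\K)$ on $\ell^2(\Z)$ and must argue instead from the explicit weighted expression above, using that $\gamma_2$ is a bounded sequence. Once $B$ is known to be bounded, the graph-norm equivalence and the coefficient-independence of the minimal and maximal domains $\mathcal{D}^{min}_{\tau_w}$ and $\mathcal{D}^{max}_{\tau_w}$ are routine.
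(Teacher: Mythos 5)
Your approach is the same as the paper's: the paper's entire proof of this lemma is the one-line observation that the difference $D_{\beta+\gamma_1,\alpha+\gamma_2,w}-D_{\beta,\alpha,w}$ is bounded, combined with the stability of compact parametrices under bounded perturbations quoted from the appendix of \cite{KMRSW1}. Your second paragraph (equivalence of graph norms, coefficient-independence of $\mathcal{D}^{min}_{\tau_w}$ and $\mathcal{D}^{max}_{\tau_w}$, and the resulting bijection between the two families of intermediate closed extensions) is a correct and somewhat more explicit elaboration of what that citation supplies, and your Fourier-component formulas for the perturbation $B$ are correct.

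However, the step you yourself single out as the main obstacle is not actually closed by your argument, and this is a genuine gap. From your own formula,
\begin{equation*}
\|fU\gamma_2(\K)\|_{\tau_w}^2=\sum_{m,k}w(k)\,|\gamma_2(k)|^2\,|f_{m-1}(k+1)|^2=\sum_{m,j}w(j-1)\,|\gamma_2(j-1)|^2\,|f_{m-1}(j)|^2,
\end{equation*}
while $\|f\|_{\tau_w}^2=\sum_{m,j}w(j)|f_{m-1}(j)|^2$. Hence boundedness of the right-multiplication piece is \emph{equivalent} to $\sup_j |\gamma_2(j-1)|^2\,w(j-1)/w(j)<\infty$, and $\gamma_2\in\ell^\infty(\Z)$ alone does not give this: the weights are only assumed positive with $\sum_k w(k)=1$, so $w(j-1)/w(j)$ may be unbounded (e.g. $w(k)\propto e^{-k^2}$), and then already $\gamma_2\equiv 1$ makes $B$ unbounded and the whole bounded-perturbation mechanism collapses. (To be fair, the paper's proof asserts boundedness of the difference with no justification at all and has exactly the same defect; you were more careful in locating the problem, but ``using that $\gamma_2$ is a bounded sequence'' is not a proof of the required inequality.) The lemma should therefore either carry the extra hypothesis $\sup_k w(k)/w(k+1)<\infty$, or be weakened to perturbations with $\gamma_2(k)\sqrt{w(k)/w(k+1)}$ bounded --- in particular finitely supported $\gamma_2$, which is the only case in which the lemma is later invoked (to remove the finitely many zeros of $\alpha$). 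Perturbations of $\beta$ are left multiplications and are genuinely harmless for arbitrary bounded $\gamma_1$, as your direct computation shows; note, though, that your parenthetical appeal to boundedness of $U\gamma_1(\K)$ on $\ell^2(\Z)$ is the wrong kind of justification --- it is precisely the failure of that transfer for right multiplication that causes the problem above.
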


\begin{proof}
Notice that the difference $D_{\beta+\gamma_1,\alpha+\gamma_2,w}-D_{\beta,\alpha,w}$ is bounded, hence the two operators both either have or do not have compact parametrices simultaneously, see the appendix of \cite{KMRSW1}.
\end{proof}

It follows from those lemmas that, without loss of generality, we may assume that $\beta(k)>0$, where $\beta(k)$ satisfies inequalities:
\begin{equation}\label{growth_cond}
\begin{aligned}
c_2(|k|+1)  \le \beta(k)&\le c_1(|k|+1),\\
|\beta(k+1) - \beta(k)| &< \infty,
\end{aligned}
\end{equation}
where $c_1$ and $c_2$ are positive constants.

Next we look at properties of the coefficient $\alpha(k)$. If $f$ is in the domain of the operator $D_{\beta,\alpha,w}$ and:
\begin{equation*}
f = \sum_n U^n f_n(\K),
\end{equation*}
we can write $D_{\beta,\alpha,w}f = U\beta(\K)f - fU\alpha(\K)$ in Fourier components as:
\begin{equation}\label{DFcomp}
D_{\beta,\alpha,w}f = \sum_n U^{n+1} (D_nf_n)(\K),
\end{equation}
where\begin{equation*}
(D_nf)(k) = \beta(k+n)f(k) - \alpha(k)f(k+1).
\end{equation*}

\begin{lem}
If the operator $D_{\beta,\alpha,w}$ has compact parametrices, then  $\alpha(k)$ has at most finitely many zeros.
\end{lem}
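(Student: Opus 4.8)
The plan is to argue by contrapositive: assuming $\alpha(k)$ has infinitely many zeros, I would exhibit an infinite-dimensional subspace on which $D_{\beta,\alpha,w}$ behaves so that a parametrix cannot be compact, most naturally by producing an infinite-multiplicity eigenvalue or by violating the closed-range/finite-cokernel part of the Fredholm property. Using the Fourier decomposition \eqref{DFcomp}, it suffices to understand the component operators $(D_nf)(k) = \beta(k+n)f(k) - \alpha(k)f(k+1)$ acting in the weighted space $\ell^2_{w}(\Z)$. The key observation is that if $\alpha(k_0) = 0$, then $D_n$ decouples: the basis vector $E_{k_0}$ is mapped to $\beta(k_0+n)E_{k_0}$ (up to the shift $U^{n+1}$ built into \eqref{DFcomp}), i.e. $E_{k_0}$ is essentially an eigenvector, and more importantly the ``coupling'' between indices $\le k_0$ and indices $> k_0$ is severed in one direction. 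So if there are infinitely many zeros $k_0^{(1)} < k_0^{(2)} < \cdots$, one gets infinitely many such decoupling points.

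The main steps I would carry out are: (1) Fix attention on a single component $D_n$ — say $n$ chosen so that $\beta(k+n)$ vanishes at no relevant point, which is automatic since by \eqref{growth_cond} we may assume $\beta(k)>0$ everywhere; actually $\beta$ never vanishes, so the only degeneracy comes from $\alpha$. (2) At each zero $k_0$ of $\alpha$, the vector $E_{k_0}$ satisfies $D_n E_{k_0} = \beta(k_0+n) E_{k_0}$ within that component; since $\beta(k_0 + n)\ge c_2(|k_0+n|+1)$ grows, these are not a single eigenvalue, so I instead look across components. For a fixed zero $k_0$ of $\alpha$, varying $n$ gives vectors $U^{n}E_{k_0}$ (in the full space $H_{\tau_w}$) that are eigenvectors of $D_{\beta,\alpha,w}$ only if $\beta(k_0+n)$ is constant in $n$, which it is not. (3) The correct route, I expect, is to show that infinitely many zeros of $\alpha$ force the cokernel (or kernel) of some fixed component $D_n$ to be infinite-dimensional: when $\alpha(k_0)=0$, solving $(D_n f)(k) = g(k)$ requires $g(k_0) = \beta(k_0+n) f(k_0)$ only — no constraint linking the two sides of $k_0$ — but the recursion $f(k+1) = (\beta(k+n)f(k) - g(k))/\alpha(k)$ breaks at $k = k_0$; tracking solvability, infinitely many breakpoints produce infinitely many independent obstructions to surjectivity of $D_n$ (hence of $D_{\beta,\alpha,w}$), contradicting the Fredholm property implied by compact parametrices.

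The hard part will be step (3): making the ``infinitely many obstructions'' argument precise in the weighted $\ell^2_w$ norm, since the weights $w(k)$ are essentially arbitrary and one must ensure the obstructing functionals (or the elements of an infinite-dimensional cokernel) are genuinely in $H_{\tau_w}$ and linearly independent, and that they survive the passage from $\mathcal D^{min}$ to an arbitrary closed extension $D_{\beta,\alpha,w}$ between $D^{min}$ and $D^{max}$. I would handle the extension issue by noting that any such $D_{\beta,\alpha,w}$ still respects the Fourier grading (the $U_{\tau_w,\theta}$-covariance), so it decomposes over the components $D_n$ as above, and the growth bounds \eqref{growth_cond} together with the requirement $\sum_k |\beta(k)-\alpha(k)|^2 w(k) < \infty$ pin down enough about $\alpha$'s size to make the estimates go through; the bookkeeping of which index pairs $(n,k)$ contribute is where the care is needed, and I anticipate borrowing the relevant closed-range and Fredholm-stability facts from the appendix of \cite{KMRSW1}.
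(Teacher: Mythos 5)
Your proposal has a genuine gap: the route you commit to in step (3) does not work, and the idea that actually makes the proof go through is one you glance at in step (2) and then discard. Concretely, infinitely many zeros of $\alpha$ do \emph{not} force any fixed component $D_n$ to have infinite-dimensional kernel or cokernel. At each zero $k_0$ of $\alpha$ the equation $\beta(k_0+n)f(k_0)-\alpha(k_0)f(k_0+1)=g(k_0)$ pins down $f(k_0)$ and leaves $f(k_0+1)$ free; running the recursion up to the next zero $k_1$, the forced value of $f(k_1)$ is an affine function of the free parameter $f(k_0+1)$ with nonzero linear coefficient, so each ``breakpoint'' contributes exactly one constraint \emph{and} one free parameter, and these cancel. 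For the homogeneous equation the same bookkeeping forces $f\equiv 0$ on the entire range spanned by the zeros, so the formal kernel (and, by the analogous recursion, the formal cokernel) of each fixed $D_n$ stays finite-dimensional no matter how many zeros $\alpha$ has. There are no ``infinitely many independent obstructions to surjectivity'' of a single component. (There is also a small indexing slip: when $\alpha(k_0)=0$ the decoupled eigenvector of $D_n$ is $\delta_{k_0+1}$, not $\delta_{k_0}$, since the off-diagonal term of $D_n$ at site $k$ is $-\alpha(k)f(k+1)$.)

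The missing idea is to vary the component index $n$ \emph{together with} the zero. For each zero $k_0$ of $\alpha$ one has $D_n\delta_{k_0+1}=\beta(k_0+1+n)\,\delta_{k_0+1}$ for every $n$; you noticed that for fixed $k_0$ these eigenvalues change with $n$, but the point is to choose $n=-(k_0+1)$, which makes the eigenvalue equal to $\beta(0)$ independently of which zero $k_0$ you started from. Thus each zero $k_0$ contributes a vector $U^{-(k_0+1)}\delta_{k_0+1}(\K)$, lying in a different Fourier component, satisfying
\begin{equation*}
U^{-1}D_{\beta,\alpha,w}\,U^{-(k_0+1)}\delta_{k_0+1}(\K)=\beta(0)\,U^{-(k_0+1)}\delta_{k_0+1}(\K).
\end{equation*}
These vectors are finitely supported, hence lie in the domain of $D^{min}_{\beta,\alpha,w}$ and therefore of every closed extension between $D^{min}$ and $D^{max}$ (this also disposes of your worry about whether an arbitrary extension respects the Fourier grading). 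Infinitely many zeros then give $U^{-1}D_{\beta,\alpha,w}-\beta(0)I$ an infinite-dimensional kernel, contradicting the Fredholm property that compact parametrices imply. Without this choice of $n$ as a function of $k_0$, your plan does not close.
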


\begin{proof}
Suppose that $\alpha(k)$ has infinitely many zeros.  Let $k_0$ be one of the zeroes of $\alpha(k)$. Then the sequence $\delta_{k_0+1}(k)$ is an eigenvector of $D_n$ with eigenvalue $\beta(k_0+1+n)$, where $\delta_j(k)=1$ for $k=j$ and zero otherwise. Indeed, we have:
\begin{equation*}
\begin{aligned}
(D_n\delta_{k_0+1})(k) &= \beta(k+n)\delta_{k_0+1}(k) - \alpha(k)\delta_{k_0+1}(k+1)= \\
&=\beta(k_0+1+n)\delta_{k_0+1}(k) - \alpha(k_0)\delta_{k_0+1}(k+1)=
\beta(k_0+1+n)\delta_{k_0+1}(k) .
\end{aligned}
\end{equation*}
Using this observation we obtain that: 
\begin{equation*}
U^{-1}D_{\beta,\alpha,w}U^{-(k_0+1)}\delta_{k_0+1}(\K)=\beta(0)U^{-(k_0+1)}\delta_{k_0+1}(\K).
\end{equation*}
Notice that $U^{-(k_0+1)}\delta_{k_0+1}(\K)$ has finite support and hence belongs to the domain of $D_{\beta,\alpha,w}^{min}$, and so it is in the domain of any $D_{\beta,\alpha,w}$.    Thus, if $\alpha(k)$ has infinitely many zeros, the operator: 
$$U^{-1}D_{\beta,\alpha,w}-\beta(0)I$$ 
has an infinite dimensional kernel, which is impossible since it has compact parametrices and hence it is Fredholm by the results in the appendix of \cite{KMRSW1}.  Therefore $\alpha(k)$ can only have finitely many zeros.  This completes the proof.
\end{proof}

As a consequence of the above lemma, and also Lemma \ref{para_shift}, we assume from now on that $\alpha(k)\ne 0$ for every $k$. While that conclusion is the same as in \cite{KMRSW1}, the argument above is different.

It is more convenient to work in the Hilbert space with no weights. The corresponding equivalence is described by the following lemma.

\begin{lem}
Let $\mathcal{H}_{\tau_w}$ be the weighted Hilbert space of Proposition \ref{GNS_prop}, part 1, and let $\mathcal{H}$ be that Hilbert space for weight $w(k)=1$.  The operator $D_{\beta,\alpha,w}$ in $\mathcal{H}_{\tau_w}$ has compact parametrices if and only if the operator $D_{\beta,\tilde\alpha,1}$ in $\mathcal{H}$, also satisfying: $$D_{\beta,\tilde\alpha,1}^{min}\subset D_{\beta,\tilde\alpha,1}\subset D_{\beta,\tilde\alpha,1}^{max}$$ 
has compact parametrices,
where\begin{equation*}
\tilde{\alpha}(k) = \alpha(k)\frac{\sqrt{w(k)}}{\sqrt{w(k+1)}}.
\end{equation*}
\end{lem}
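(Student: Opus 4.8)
The plan is to exhibit a unitary intertwiner between the two operators that rescales the Fourier components by the weight. Define the map $\Psi : \mathcal{H}_{\tau_w} \to \mathcal{H}$ on a power series $f = \sum_n U^n f_n(\K)$ by $(\Psi f)_n(k) = \sqrt{w(k)}\,f_n(k)$; equivalently $\Psi f = f\,\sqrt{w(\K)}$ interpreted componentwise. From the formula for the norm on $\mathcal{H}_{\tau_w}$ in (\ref{w_inner_prod}) one checks immediately that $\|\Psi f\|_{1}^2 = \sum_{n,k}|f_n(k)|^2 w(k) = \|f\|_{\tau_w}^2$, so $\Psi$ is an isometric bijection of $\mathcal{H}_{\tau_w}$ onto $\mathcal{H}$, i.e. a unitary. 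Since $\Psi$ acts diagonally on Fourier components and preserves the support of each $f_n$, it carries $\mathcal{D}^{min}_{\tau}$ onto $\mathcal{D}^{min}_{\tau}$ (for weight $1$) and likewise for $\mathcal{D}^{max}_{\tau}$; hence it sets up a bijection between closed extensions $D_{\beta,\alpha,w}$ sandwiched between the min and max operators in $\mathcal{H}_{\tau_w}$ and closed extensions $D_{\beta,\tilde\alpha,1}$ sandwiched between the corresponding operators in $\mathcal{H}$.

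Next I would compute $\Psi D_{\beta,\alpha,w}\Psi^{-1}$ in Fourier components using (\ref{DFcomp}). On the $n$-th component the operator $D_{\beta,\alpha,w}$ acts as $(D_n f)(k) = \beta(k+n)f(k) - \alpha(k)f(k+1)$, and conjugating by multiplication by $\sqrt{w(k)}$ turns this into
\begin{equation*}
k \mapsto \beta(k+n)f(k) - \alpha(k)\frac{\sqrt{w(k)}}{\sqrt{w(k+1)}}f(k+1),
\end{equation*}
which is exactly the $n$-th Fourier component of $D_{\beta,\tilde\alpha,1}$ with $\tilde\alpha(k) = \alpha(k)\sqrt{w(k)}/\sqrt{w(k+1)}$. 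Thus $\Psi D_{\beta,\alpha,w}\Psi^{-1} = D_{\beta,\tilde\alpha,1}$ on the common core, and since $\Psi$ is unitary this identity passes to the closures, so the two operators are unitarily equivalent. Unitarily equivalent closed operators have simultaneously compact parametrices (the parametrices are conjugated by the same unitary), which gives the claim.

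The only genuinely delicate point is verifying that $\Psi$ maps the domain of the closure $D_{\beta,\alpha,w}$ onto the domain of $D_{\beta,\tilde\alpha,1}$, rather than just intertwining the operators on the dense cores. This is handled by the general fact that a unitary $\Psi$ with $\Psi\,\mathcal{D}^{min}_{\tau} = \mathcal{D}^{min}_{\tau}$, $\Psi\,\mathcal{D}^{max}_{\tau} = \mathcal{D}^{max}_{\tau}$, and $\Psi D^{min}_{\beta,\alpha,w}\Psi^{-1} = D^{min}_{\beta,\tilde\alpha,1}$ (as closed operators, by taking closures of the core identity) automatically conjugates every closed operator between the two into a closed operator between the images; the intermediate-extension hypothesis is preserved because $\Psi$ respects the graph-closure order. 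I expect the main obstacle, such as it is, to be bookkeeping: making sure the weight factors land on the correct side in the commutator $U\beta(\K)f - fU\alpha(\K)$ so that only $\alpha$ (and not $\beta$) gets rescaled, which is precisely what the shift in $\sqrt{w(k)}/\sqrt{w(k+1)}$ encodes.
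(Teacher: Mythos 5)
Your proof is correct and essentially identical to the paper's: both conjugate by the unitary $f\mapsto f\,w(\K)^{1/2}$, which is an isometry by the weighted form of the norm $\|f\|_{\tau_w}^2=\sum_{n,k}w(k)|f_n(k)|^2$, and observe that only $\alpha$ (not $\beta$) picks up the factor $\sqrt{w(k)}/\sqrt{w(k+1)}$ because of the commutation relation $a(\K)U=Ua(\K+1)$. The one quibble is your claim that $\Psi$ carries $\mathcal{D}^{max}_{\tau}$ onto its weight-one counterpart: multiplication by $\sqrt{w(\K)}$ preserves compact support (so the $min$ cores match exactly) but not eventual constancy of the $f_n$, so for the $max$ operators one should argue at the level of closures; the paper elides this point as well.
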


\begin{proof}
In $\mathcal{H}_{\tau_w}$ write the norm as:
\begin{equation*}
\|f\|_w^2 = \textrm{tr}(w(\K)f^*f) = \textrm{tr} \left(fw(\K)^{1/2}\right)^*\left(w(\K)^{1/2}f\right),
\end{equation*}
and set $\varphi(f) = fw(\K)^{1/2}:\mathcal{H}_{\tau_w}\to\mathcal{H}$.  Then $\varphi$ is an isomorphism of Hilbert spaces.    Moreover, we have:
\begin{equation*}
\varphi(D_{\beta,\alpha,w}\varphi^{-1}f) = U\beta(\K)f - fU\alpha(\K)\frac{\sqrt{w(\K)}}{\sqrt{w(\K+1)}}=D_{\beta,\tilde\alpha,1}f,
\end{equation*}
and $\varphi D_{\beta,\alpha,w}\varphi^{-1}$ is an unbounded operator in $\mathcal{H}$, and so $D_{\beta,\alpha,w}$ and $D_{\beta,\tilde\alpha,1}$ are unitarily equivalent, thus completing the proof. Notice also that $\tilde{\alpha}(k)\ne 0$, because ${\alpha}(k)\ne 0$.
\end{proof}

From now on we work with operators $D_{\beta,\alpha,1}^{min}\subset D_{\beta,\alpha,1}\subset D_{\beta,\alpha,1}^{max}$ in the unweighted Hilbert space $\mathcal{H}$. For convenience we define a sequence $\{\mu(k)\}$ such that 
\begin{equation*}
\alpha(k) = \beta(k)\frac{\mu(k+1)}{\mu(k)}.
\end{equation*}
The sequence $\mu$, normalized by $\mu(0)=1$,  is completely determined by the above equation in terms of $\alpha$ and $\beta$ and will be used as a coefficient instead of $\alpha$. Using $\mu(k)$ we rewrite the two main operators as follows:
\begin{equation*}
(D_nf)(k) = \beta(k+n)\left(f(k) - \frac{\beta(k)}{\beta(k+n)}\frac{\mu(k+1)}{\mu(k)}f(k+1)\right), 
\end{equation*}\begin{equation*}
(D_n)^*f(k) = {\beta}(k+n)\left(f(k) - \frac{{\beta}(k-1)}{{\beta}(k+n)}\frac{\overline{\mu}(k)}{\overline{\mu}(k-1)}f(k-1)\right). \end{equation*}

The next goal is to compute the kernel and the cokernel of $D_{\beta,\alpha,1}$. This is done by using Fourier decomposition (\ref{DFcomp}) and the operators $D_n$ and $(D_n)^*$ above.

\begin{lem}\label{formal_kernels}
The formal kernels of $D_n$ and $(D_n)^*$ are one dimensional and are spanned by, correspondingly:
\begin{equation*}
h_n(k) =\left\{
\begin{aligned}
&\frac{1}{\mu(k)}\prod_{j=0}^{n-1}\beta(j+k) &&\quad\textrm{for }n> 0\\
&\frac{1}{\mu(k)} &&\quad\textrm{for }n=0\\
&\frac{1}{\mu(k)}\prod_{j=n}^{-1}\frac{1}{\beta(j+k)} &&\quad\textrm{for }n<0, 
\end{aligned}\right.
\end{equation*}
and
\begin{equation*}
\tilde h_n(k) = \left\{
\begin{aligned}
&\overline{\mu}(k)\prod_{j=0}^n\frac{1}{{\beta}(j+k)} &&\quad\textrm{for }n> 0\\
&\overline{\mu}(k) &&\quad\textrm{for }n=0\\
&\overline{\mu}(k)\prod_{j=n+1}^{-1}{\beta}(j+k) &&\quad\textrm{for }n<0.
\end{aligned}\right. 
\end{equation*}
\end{lem}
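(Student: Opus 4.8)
The plan is to recognize each of the equations $D_n f = 0$ and $(D_n)^* f = 0$ as a \emph{first-order} linear recurrence on the space of all sequences $f:\Z\to\C$, and then to exploit the nonvanishing of its coefficients. First I would rewrite $(D_n f)(k)=0$ in the form
\begin{equation*}
f(k+1) = \frac{\beta(k+n)}{\beta(k)}\,\frac{\mu(k)}{\mu(k+1)}\,f(k),
\end{equation*}
and $(D_n)^*f(k)=0$ in the form
\begin{equation*}
f(k) = \frac{\beta(k-1)}{\beta(k+n)}\,\frac{\overline{\mu}(k)}{\overline{\mu}(k-1)}\,f(k-1).
\end{equation*}
Since we have reduced to the case $\beta(k)>0$ for all $k$ (conditions (\ref{growth_cond})) and $\alpha(k)\ne 0$, the normalization $\mu(0)=1$ together with $\mu(k+1)=\mu(k)\alpha(k)/\beta(k)$ forces $\mu(k)\ne 0$ for every $k$. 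Hence in both recurrences the factor relating consecutive values of $f$ is a nonzero complex number, so the value $f(0)$ may be prescribed arbitrarily and all remaining values $f(k)$, $k\in\Z$, are then uniquely determined — forward by the recurrence and backward by dividing by the nonzero factor. This shows the formal kernels are exactly one dimensional.

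It then remains to verify that the stated sequences $h_n$ and $\tilde h_n$ are solutions. This is a routine telescoping check: for $n>0$ one computes $h_n(k+1)/h_n(k) = \bigl(\mu(k)/\mu(k+1)\bigr)\prod_{j=1}^{n}\beta(j+k)\big/\prod_{j=0}^{n-1}\beta(j+k) = \bigl(\mu(k)/\mu(k+1)\bigr)\,\beta(k+n)/\beta(k)$, which is precisely the multiplier appearing in the $D_n$-recurrence; the cases $n=0$ (empty product) and $n<0$ are handled identically after tracking the index shift inside the product, and the three analogous cases for $\tilde h_n$ match the $(D_n)^*$-recurrence. Since each $h_n$ and each $\tilde h_n$ is nowhere zero, it spans the corresponding one dimensional formal kernel.

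There is no genuine obstacle here; the only point requiring care is the bookkeeping with the product conventions — empty products equal to $1$, and the reindexing $j\mapsto j+1$ inside $\prod$ when $k$ is replaced by $k\pm 1$ — so that the three ranges $n>0$, $n=0$, $n<0$ patch together consistently. I would also emphasize that these are \emph{formal} kernels, i.e. solutions in the space of all sequences with no $\ell^2$ constraint, so that the true kernel and cokernel of $D_{\beta,\alpha,1}$, needed in the subsequent steps, will be obtained by intersecting these one dimensional spaces (summed over the Fourier index $n$ via the decomposition (\ref{DFcomp})) with $\mathcal{H}$.
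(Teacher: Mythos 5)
Your proposal is correct and is exactly the ``straightforward calculation'' that the paper's one-line proof alludes to: each of $D_nf=0$ and $(D_n)^*f=0$ is a first-order recurrence whose multiplier is a nonzero complex number for every $k$ (since $\beta(k)>0$ after the reductions and $\mu(k)\neq 0$ because $\mu(k+1)=\mu(k)\alpha(k)/\beta(k)$ with $\alpha(k)\neq 0$), so the formal kernel is one dimensional and is pinned down by a telescoping check. One caveat: if you actually carry out that check in the $n=0$ case for $\tilde h_n$, you will find that $\overline{\mu}(k)$ does \emph{not} lie in the kernel of $(D_0)^*$ --- one gets $(D_0)^*\overline{\mu}(\K)(k)=(\beta(k)-\beta(k-1))\overline{\mu}(k)\neq 0$ since $\beta$ is not constant --- and the correct spanning sequence is $\overline{\mu}(k)/\beta(k)$, i.e.\ the $n>0$ formula continued to $n=0$; this is a typo in the statement (harmless later, since the subsequent lemmas only use $|h_n(k)|\,|\tilde h_n(k)|=1/\beta(n+k)$ and growth rates up to polynomially bounded factors), but your blanket assertion that the three $\tilde h_n$ cases ``match'' is slightly too quick.
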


\begin{proof}
This is a result of straightforward calculations similar to those in \cite{KM1} and \cite{KMRSW1}.
\end{proof}

The computations above were formal; to actually compute the kernel and the cokernel of $D_{\beta,\alpha,1}$ we need to look at only those solutions which are in the domain/codomain of $D_{\beta,\alpha,1}$. We have the following inclusions:
\begin{equation*}
\textrm{ker\,}D_{\beta,\alpha,1}^{min}\subset \textrm{ker\,}D_{\beta,\alpha,1}\subset \textrm{ker\,}D_{\beta,\alpha,1}^{max},
\end{equation*}
and\begin{equation*}
\textrm{coker\,}D_{\beta,\alpha,1}^{max}\subset \textrm{coker\,}D_{\beta,\alpha,1}\subset \textrm{coker\,}D_{\beta,\alpha,1}^{min}.
\end{equation*}

The following lemma exhibits the key departure from the analogous classical analysis of the d-bar operator.

\begin{lem}\label{kernel_property}
If the operator $D_{\beta,\alpha,1}$ has compact parametrices, then both $\textrm{ker\,}D_{\beta,\alpha,1}^{max}$ and $\textrm{coker\,}D_{\beta,\alpha,1}^{min}$ are trivial. Moreover, we have:
\begin{equation}\label{inf_sums}
\sum_{k\in\Z} |h_n(k)|^2 =\sum_{k\in\Z} |\tilde h_n(k)|^2 =\infty
\end{equation}
for every $n$, where $h_n$ and $\tilde h_n$ are the formal kernels from Lemma \ref{formal_kernels}.
\end{lem}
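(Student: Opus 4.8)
The plan is to analyze the formal kernel elements $h_n$ and $\tilde h_n$ from Lemma~\ref{formal_kernels} using the two-sided linear growth estimate (\ref{growth_cond}) on $\beta$, and to combine this with the Fredholm property that follows from having compact parametrices. The key point is that $\beta(k)$ grows linearly at \emph{both} infinities, so the products appearing in $h_n$ and $\tilde h_n$ behave like factorials in one direction and reciprocals of factorials in the other; in particular neither can be square-summable, which is exactly the opposite of the classical situation where a decaying solution exists at the boundary.

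First I would establish (\ref{inf_sums}). Consider $h_n$ for $n>0$: by (\ref{growth_cond}) we have $\prod_{j=0}^{n-1}\beta(j+k)\ge c_2^n\prod_{j=0}^{n-1}(|j+k|+1)$. As $k\to+\infty$ this product grows polynomially in $k$ of degree $n\ge 1$, and since $\mu(k)$ is of a controlled size (one can absorb $1/\mu(k)$ into the estimate, or first apply the unitary gauge transformations and parameter shifts of the earlier lemmas to normalize), the terms $|h_n(k)|^2$ do not go to zero fast enough — in fact they blow up — so $\sum_k |h_n(k)|^2=\infty$. The same argument at $k\to-\infty$ handles $h_n$ for $n<0$ (where the product is of reciprocals $\prod 1/\beta(j+k)$, but now $j+k$ ranges over negative integers of large absolute value, so again one direction forces divergence), and the case $n=0$ is immediate since $1/\mu(k)$ cannot be $\ell^2$ once we know $\mu$ is, say, bounded below in modulus along a subsequence — more carefully, one checks directly that $h_0$ fails to be square-summable because the defining recursion $\alpha(k)=\beta(k)\mu(k+1)/\mu(k)$ together with $|\alpha(k)|\le |\beta(k)|\cdot(\text{something not }\ell^2\text{-compatible})$ prevents decay at one of the two infinities. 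The computation for $\tilde h_n$ is entirely parallel, with the roles of the two infinities swapped. I would present both via a single lemma-style estimate: a product of $n$ (or $|n|$) consecutive values of $\beta$ (resp.\ $1/\beta$) times a gauge factor cannot tend to zero along both tails, hence the $\ell^2$-sums diverge.

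Once (\ref{inf_sums}) is in hand, the triviality of $\ker D^{max}_{\beta,\alpha,1}$ and $\mathrm{coker}\,D^{min}_{\beta,\alpha,1}$ follows formally. Using the Fourier decomposition (\ref{DFcomp}), an element of $\ker D^{max}_{\beta,\alpha,1}$ decomposes into Fourier components, each of which must lie in the $\ell^2$-kernel of the corresponding $D_n$; but the formal kernel of $D_n$ is one-dimensional and spanned by $h_n$, which by (\ref{inf_sums}) is not in $\ell^2(\Z)$, so each component vanishes and the kernel is trivial. Dually, $\mathrm{coker}\,D^{min}_{\beta,\alpha,1}$ is identified with the $\ell^2$-kernel of the adjoint operator, whose Fourier components have formal kernels spanned by $\tilde h_n\notin\ell^2(\Z)$, hence that cokernel is trivial as well. (I would be a little careful about the adjoint of $D^{min}$ versus $(D^{min})^{*}=(D^{max})^{*}$ and the standard identification of cokernel with $\ker$ of the Hilbert-space adjoint, but this is routine given that $D_{\beta,\alpha,1}$ has compact parametrices and hence closed range.)

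The main obstacle I anticipate is the bookkeeping around the gauge factor $1/\mu(k)$ (resp.\ $\overline{\mu}(k)$): a priori $\mu(k)$ is only constrained by $\alpha(k)=\beta(k)\mu(k+1)/\mu(k)$ with $\alpha(k)\ne 0$, so in principle $\mu$ could itself grow or decay and partially cancel the factorial growth of the $\beta$-products. The resolution is that such cancellation can happen at \emph{most one} of the two infinities: if $|\mu(k)|$ grows fast enough to kill the product in $h_n$ as $k\to+\infty$, then it necessarily forces $|\tilde h_n(k)|$ (which carries $\overline{\mu}(k)$ rather than $1/\mu(k)$) to blow up as $k\to+\infty$, and symmetrically at $-\infty$; since we need \emph{both} $h_n\notin\ell^2$ and $\tilde h_n\notin\ell^2$, and each of $h_n,\tilde h_n$ individually only needs to fail square-summability at \emph{one} tail, the two-sided linear growth of $\beta$ is exactly enough to guarantee the conclusion regardless of $\mu$. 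Making this dichotomy precise — essentially a statement that $h_n(k)\tilde h_n(k)$ has modulus comparable to a product of $\beta$-values over a window, independent of $\mu$ — is the crux, and I would isolate it as the first step of the proof.
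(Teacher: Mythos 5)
There is a genuine gap, and it sits exactly at the step you flag as the ``crux.'' Your plan is to derive (\ref{inf_sums}) from the two-sided linear growth of $\beta$, treating $1/\mu(k)$ as a gauge factor that can be absorbed or normalized away. But nothing proved up to this point controls $\mu$: the earlier reductions normalize $\beta$ (positivity, linear growth) and make $\alpha$ nonvanishing, and that is all. Indeed the later Lemma \ref{mu_ineq} shows that under the compact-parametrix hypothesis $\mu$ must be of \emph{rapid} growth at one infinity and rapid decay at the other, so the one-sided sums $\sum_{k>0}|h_n(k)|^2$ or $\sum_{k<0}|h_n(k)|^2$ genuinely are finite; and for an arbitrary $\mu$ (say $|\mu(k)|=e^{k^2}$) the full two-sided sum $\sum_{k\in\Z}|h_n(k)|^2$ is finite as well, so (\ref{inf_sums}) is simply false without the Fredholm hypothesis. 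Your proposed rescue --- that $|h_n(k)|\,|\tilde h_n(k)|=1/\beta(n+k)$ is not summable, hence ``the conclusion holds regardless of $\mu$'' --- only yields, via Cauchy--Schwarz, that \emph{at least one} of $h_n,\tilde h_n$ fails to be square-summable. It cannot exclude the case where exactly one of them lies in $\ell^2(\Z)$, which is precisely the dangerous case.

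What is missing is the argument the paper actually uses: suppose $\|h_{n_0}\|<\infty$ for some $n_0$. The growth condition (\ref{growth_cond}) gives $|h_{n-1}(k)|\le |h_n(k)|/c_2$, so $\|h_n\|<\infty$ for \emph{all} $n\le n_0$, producing an infinite-dimensional kernel for $D^{max}_{\beta,\alpha,1}$. One then truncates, $h_n^N:=h_n\cdot\mathbf{1}_{|k|\le N}$, and computes that $D_nh_n^N$ has only two nonzero entries, namely $\pm h_{n+1}(\pm N)$ up to indexing, which tend to $0$ because $h_{n+1}\in\ell^2$; hence each $h_n$ lies in the domain of $D_n^{min}$ and even the minimal operator (hence every intermediate closed extension) has infinite-dimensional kernel, contradicting Fredholmness. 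The dual argument with $\tilde h_n$ handles the cokernel. Only after this contradiction do you get (\ref{inf_sums}), from which the triviality of $\ker D^{max}_{\beta,\alpha,1}$ and $\mathrm{coker}\,D^{min}_{\beta,\alpha,1}$ follows as in your last paragraph (that part of your write-up is fine). In short: the logical order must be reversed --- the Fredholm property is the input that forces (\ref{inf_sums}), not a consequence of growth estimates that you can verify independently.
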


\begin{proof}
Let $h_n$ and $\tilde{h}_n$ be the formal solutions to the equations $D_nh_n = 0$ and $(D_n)^*\tilde h_n = 0$ respectively, as described in  Lemma \ref{formal_kernels}.   First we study the $\ell^2(\Z)$ kernel of $D_n$.  There are two possibilities: 

$(1)$  there exists $n_0\in\Z$ such that $\|h_{n_0}\|<\infty$, or

$(2)$ $\|h_{n}\|=\infty$ for all $n$.

Consider the  first case. It is clear from the growth conditions (\ref{growth_cond}) and the formulas for $h_n$ that if there exists $n_0\in\Z$ such that $\|h_{n_0}\|<\infty$ then $\|h_n\|<\infty$ for all $n\le n_0$. Those inequalities imply that $D_{\beta,\alpha,1}^{max}$ has an infinite dimensional kernel. We argue below that in this case the kernel of $D_{\beta,\alpha,1}^{min}$ is also infinite dimensional, in contrast to classical theory. 

For any $n<n_0$ consider the sequence:
\begin{equation*}
h_n^N(k) = \left\{
\begin{aligned}
&h_n(k) &&\textrm{ for }|k|\le N \\
&0 &&\textrm{ else.}
\end{aligned}\right.
\end{equation*}
Notice that, because it is eventually zero, the sequence $h_n^N(k)$ is in the domain of $D_n^{min}$ and also $h_n^N\to h_n$ in $\ell^2(\Z)$ as $N\to\infty$.   Moreover, a  key direct calculation shows that: 
\begin{equation*}
D_nh_n^N(k) = \left\{
\begin{aligned}
&\beta(n+N)h_n(N)= h_{n+1}(N) &&\textrm{ for }k=N \\
&-\beta(-N-1)\frac{\mu(-N)}{\mu(-N-1)}h_n(-N) = -h_{n+1}(-N-1) &&\textrm{ for }k = -N-1\\
&0 &&\textrm{ else.}
\end{aligned}\right.
\end{equation*}
From this we see that $D_nh_n^N\to0$ as $N\to\infty$ since $\|h_n\|<\infty$ for all $n\leq n_0$.  This shows that the formal kernel of $D_n$ is contained in the domain of $D_n^{min}$.  In turn, this implies that $D_{\beta,\alpha,1}$ has an infinite dimensional kernel, contradicting the fact that $D_{\beta,\alpha,1}$ is Fredholm.  

A similar argument produces an infinite dimensional cokernel for $D_{\beta,\alpha,1}$ assuming that there exists $n_0\in\Z$ such that $\|\tilde h_{n_0}\|<\infty$.  Consequently, this does not happen in our case, and $\|h_{n}\|=\|\tilde h_{n}\|=\infty$ for all $n$. 
But that means that the $\ell^2(\Z)$ kernels of $D_n$ and $(D_n)^*$ are all trivial for any $n$. This implies that both $\textrm{ker\,}D_{\beta,\alpha,1}^{max}$ and $\textrm{coker\,}D_{\beta,\alpha,1}^{min}$ are trivial. Thus the proof is complete.
\end{proof}

It follows from the above lemma, and from the ``sandwich" proposition in appendix of \cite{KMRSW1}, that all three operators $D_{\beta,\alpha,1}^{min}\subset D_{\beta,\alpha,1}\subset D_{\beta,\alpha,1}^{max}$ have compact parametrices.

Using the above result we show that existence of compact parametrices for $D_{\beta,\alpha,1}$ implies specific growth estimates on coefficients $\mu(k)$ in the following lemma. Notice that, while the methods used here are similar to those in \cite{KMRSW1}, the outcomes are quite different; instead of being polynomially increasing or decreasing the coefficients $\mu(k)$ now have to be rapidly increasing or decreasing. 
\begin{lem}\label{mu_ineq}
If the operator $D_{\beta,\alpha,1}$ such that $D_{\beta,\alpha,1}^{min}\subset D_{\beta,\alpha,1}\subset D_{\beta,\alpha,1}^{max}$ has compact parametrices then the Fourier components $(D_n)^{min}$ and $((D_n)^*)^{max}$ are invertible operators with bounded inverses. Moreover, for every $N>0$ there exists constants $C_N>0$ such that exactly one of the following two holds:

\begin{enumerate}
\item $|\mu(k)|\ge C_N(1+|k|)^{N}$ for $k>0$ and $|\mu(k)|\le C_N(1+|k|)^{-N}$ for $k<0$, or
\item $|\mu(k)|\le C_N(1+|k|)^{-N}$ for $k>0$ and $|\mu(k)|\ge C_N(1+|k|)^{N}$ for $k<0$.
\end{enumerate}
\end{lem}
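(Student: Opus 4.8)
The plan is to pass to the Fourier decomposition (\ref{DFcomp}) and work block by block. By the discussion following Lemma \ref{kernel_property}, $D_{\beta,\alpha,1}^{min}$ has compact parametrices, hence is Fredholm; and $\ker D_{\beta,\alpha,1}^{min}\subset\ker D_{\beta,\alpha,1}^{max}=0$ while $\operatorname{coker}D_{\beta,\alpha,1}^{min}=0$ by Lemma \ref{kernel_property}, so $D_{\beta,\alpha,1}^{min}$ is boundedly invertible. Since the minimal domain respects the Fourier grading of (\ref{DFcomp}), $D_{\beta,\alpha,1}^{min}=\bigoplus_n(D_n)^{min}$ and $(D_{\beta,\alpha,1}^{min})^*=\bigoplus_n((D_n)^*)^{max}$; a direct summand of a boundedly invertible operator is boundedly invertible, so each $(D_n)^{min}$ and each $((D_n)^*)^{max}$ is boundedly invertible, with inverse norms bounded uniformly in $n$. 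This proves the first assertion. (A short index count also forces $(D_n)^{min}=(D_n)^{max}$, so the intermediate operator is unique, but we will not need this.)

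\textbf{Step 2 (one-sided square-summability of the formal (co)kernels).} Fix $n$. Since $(D_n)^{min}$ is onto $\ell^2(\Z)$, we may solve $(D_n)^{min}f=\delta_0$ with $f\in\ell^2(\Z)$, $f\not\equiv0$, where $\delta_0$ is the point mass at $0$. On each of the half-lines $k\ge1$ and $k\le0$ the sequence $f$ satisfies the homogeneous recursion $(D_nf)(k)=0$, so there it is a scalar multiple of the formal solution $h_n$ of Lemma \ref{formal_kernels}; if $h_n$ fails to be square-summable near an end, the corresponding multiple must vanish. As $f\not\equiv0$, $h_n$ is square-summable near at least one of $\pm\infty$, and by (\ref{inf_sums}) near exactly one; running the same argument for the boundedly invertible $(D_n)^*$ gives the analogue for $\tilde h_n$. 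The formulas of Lemma \ref{formal_kernels} together with the bounds (\ref{growth_cond}) on $\beta$ show that near either infinity $|h_n(k)|\asymp(|k|+1)^{n}|\mu(k)|^{-1}$ and $|\tilde h_n(k)|\asymp(|k|+1)^{-n-1}|\mu(k)|$ up to $n$-dependent constants; hence ``$h_n\in\ell^2$ near a fixed end'' is inherited by all smaller $n$, and ``$\tilde h_n\in\ell^2$ near a fixed end'' by all larger $n$. Combining this monotonicity with the ``exactly one end'' property yields the dichotomy: either (I) $h_n\in\ell^2$ near $+\infty$ for all $n$, or (II) $h_n\in\ell^2$ near $-\infty$ for all $n$; and likewise exactly one of the two analogous one-sided statements holds for $\tilde h_n$.

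\textbf{Step 3 (growth of $\mu$).} Suppose we are in case (I). Then $\sum_{k\ge1}(k+1)^{2N}|\mu(k)|^{-2}<\infty$ for every $N$, which forces $|\mu(k)|\ge C_N(1+|k|)^{N}$ for $k>0$. Moreover $h_0=1/\mu$ and $\tilde h_0=\overline\mu$ cannot both be square-summable near $+\infty$ (the first would force $\mu(k)\to\infty$, the second $\mu(k)\to0$, along $k\to+\infty$), so the $\tilde h$-dichotomy must read ``$\tilde h_n\in\ell^2$ near $-\infty$ for all $n$''; taking $n=-(N+1)$ and using (\ref{growth_cond}) gives $\sum_{k\le-1}|\mu(k)|^{2}(|k|+1)^{2N}<\infty$, that is $|\mu(k)|\le C_N(1+|k|)^{-N}$ for $k<0$. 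This is alternative (1); case (II) is treated identically with $+\infty$ and $-\infty$ exchanged and gives (2). Finally, for $N\ge1$ the two alternatives prescribe contradictory behaviour of $\mu$ on the positive integers, so exactly one of them holds.

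\textbf{Main obstacle.} Step 2 is the heart of the matter: one must show that a boundedly invertible first-order difference operator on $\ell^2(\Z)$ whose coefficients decay rapidly toward one infinity and grow rapidly toward the other has its formal kernel fail $\ell^2$-summability at exactly one of the two infinities, and then sharpen the resulting one-sided summability conditions, via Lemma \ref{formal_kernels} and (\ref{growth_cond}), into the super-polynomial pointwise bounds (1)--(2). This is precisely where the annulus departs from the quantum disk of \cite{KMRSW1}, where the components lived in a single polynomial regime; here the asymmetry between the two infinities is what upgrades ``polynomially increasing/decreasing'' to ``rapidly increasing/decreasing''.
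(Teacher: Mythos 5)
Your proof is correct and follows essentially the same route as the paper's: solve $D_nf=\delta_0$ (resp.\ the adjoint equation) using invertibility of the Fourier components, match the solution on each half-line with the formal kernels of Lemma \ref{formal_kernels}, combine $\|h_n\|=\|\tilde h_n\|=\infty$ with the monotonicity in $n$ coming from (\ref{growth_cond}) to obtain the one-sided summability dichotomy, and convert that into the super-polynomial bounds on $\mu$. The only cosmetic differences are that you obtain bounded invertibility of $(D_n)^{min}$ and $((D_n)^*)^{max}$ from the direct-sum structure of $D_{\beta,\alpha,1}^{min}$ rather than from closedness of the range of the maximal operator, and you exclude same-end square-summability of $h_n$ and $\tilde h_n$ via the $n=0$ case rather than via the identity $|h_n(k)\tilde h_n(k)|=1/\beta(n+k)$ for general $n$; both are sound.
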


\begin{proof}
The Fredholm property of $D_{\beta,\alpha,1}$ implies that the ranges of $D_{\beta,\alpha,1}^{max}$ and $((D_{\beta,\alpha,1})^*)^{max}$ are closed. By the proof of Lemma \ref{kernel_property}  the $\ell^2(\Z)$ kernels of $D_n$ and $(D_n)^*$ are trivial for all $n$.  It follows that we have:
$$\textrm{Ran }D_n^{max} = \textrm{Ran }(D_n^*)^{max} = \ell^2(\Z).$$ 
In particular this says that we have inclusions:
$$\eta_n:=(D_n^{max})^{-1}\chi_0(k)\in\ell^2(\Z)\ \textrm{ and }\ \tilde\eta_n:=((D_n^*)^{max})^{-1}\chi_0(k)\in\ell^2(\Z),$$
where, as before, $\chi_0(k) = 1$ for $k=0$ and zero for all other $k$.  

Since the right-hand sides of the  equations: 
\begin{equation}\label{eta_eq}
(D_n\eta_n)(k) = \chi_0(k)\ \textrm{ and }\ (D_n)^*\tilde\eta_n(k) = \chi_0(k)
\end{equation}
are zero when $k\neq0$, the solutions are multiples of the kernel solutions from Lemma \ref{formal_kernels}:
\begin{equation*}
\eta_n(k) = \left\{
\begin{aligned}
&c_1^+(n)h_n(k) &&\textrm{ for }k>0 \\
&c_1^-(n)h_n(k)  &&\textrm{ for }k<0,
\end{aligned}\right.\ \textrm{ and }\ 
\tilde\eta_n(k) = \left\{
\begin{aligned}
&c_2^+(n)\tilde h_n(k) &&\textrm{ for }k>0 \\
&c_2^-(n)\tilde h_n(k) &&\textrm{ for }k<0.
\end{aligned}\right.
\end{equation*}
When $k=0$ in equations (\ref{eta_eq}) the right-hand side is one and a simple calculation shows that the constants $c_1^\pm(n)$, $c_2^\pm(n)$ satisfy the following:
\begin{equation*}
\begin{aligned}
c_1^+(n) - c_1^-(n) &= \mu(0)\prod_{j=0}^n\frac{1}{\beta(j)} &&\textrm{ for }n\ge 0, \\
c_1^+(n) - c_1^-(n) &= \mu(0)\prod_{j=n+1}^{-1}\beta(j) &&\textrm{ for }n<0, \\
c_2^+(n) - c_2^-(n) &= \frac{1}{\overline{\mu}(0)}\prod_{j=0}^{n-1}\beta(j) &&\textrm{ for }n\ge 0, \\
c_2^+(n) - c_2^-(n) &= \frac{1}{\overline{\mu}(0)}\prod_{j=n}^{-1}\beta(j) &&\textrm{ for }n<0.
\end{aligned}
\end{equation*}
In particular for any fixed $n$ the constants $c_1^\pm(n)$ cannot be both equal to zero, and the same for $c_2^\pm(n)$.

Next, from Lemma \ref{kernel_property} we know that the two-sided sums from formula (\ref{inf_sums}) are infinite. This can only be reconciled with the expressions for $\eta_n$ and $\tilde\eta_n$, which are in $\ell^2(\Z)$, if some one-sided sums of squares of $\eta_n$ and $\tilde\eta_n$ are finite and some coefficients $c_1^\pm(n)$, $c_2^\pm(n)$ are zero.
In fact we will show below that there are only two options that can occur.  

Notice that for any $n$ exactly one of the two sums: 
\begin{equation*}
||h_n||_+^2:=\sum_{k\geq 0}|h_n(k)|^2, \ \ ||h_n||_-^2:=\sum_{k< 0}|h_n(k)|^2
\end{equation*}
must be finite, or otherwise $\eta_n$ will not be in $\ell^2(\Z)$ as at least one of $c_1^\pm(n)$ is not zero. 
Assume then that for some $n_0$ we have $||h_{n_0}||_+<\infty$. Because $||h_{n_0}||=\infty$ we must have $||h_{n_0}||_-=\infty$. Now it follows
from the growth conditions (\ref{growth_cond}) and the formulas for $h_n$ that $\|h_n\|_+<\infty$ for all $n\le n_0$, and similarly $\|h_n\|_-=\infty$ for all $n\ge n_0$. But whenever $\|h_n\|_-=\infty$ we must have $\|h_n\|_+<\infty$ by the square summability of $\eta_n$. Thus if $||h_{n_0}||_+<\infty$ for some $n_0$, we have $\|h_n\|_+<\infty$ for all $n$.

Clearly, the same arguments can be made for $\tilde h_n$. Notice however that:
\begin{equation*}
||h_n||_+||\tilde h_n||_+\geq \sum_{k\geq 0}|h_n(k)||\tilde h_n(k)|=\sum_{k\geq 0}\frac{1}{\beta(n+k)}=\infty
\end{equation*}
by the growth conditions on $\beta(k)$, and similarly for sums with $k<0$. Thus, if $\|h_n\|_+<\infty$ then $\|\tilde h_n\|_+=\infty$ and the other way around. This leads to the following two options:

\underline{Option 1:} For all $n$ we have:
$$\sum_{k>0}|h_n(k)|^2<\infty, \sum_{k<0}|h_n(k)|^2=\infty, \sum_{k>0}|\tilde h_n(k)|^2=\infty, \sum_{k<0}|\tilde h_n(k)|^2<\infty.$$   

\underline{Option 2:} For all $n$ we have: 
$$\sum_{k>0}|h_n(k)|^2=\infty, \sum_{k<0}|h_n(k)|^2<\infty, \sum_{k>0}|\tilde h_n(k)|^2<\infty, \sum_{k<0}|\tilde h_n(k)|^2=\infty.$$ 

Using the estimates (\ref{growth_cond}) for option 1 we see that the following two sums are convergent for every $n$: 
\begin{equation*}
\sum_{k>0}\left |\frac{(k^2+1)^n}{\mu(k)}\right |^2<\infty, \sum_{k<0}\left |\frac{\mu(k)}{(k^2+1)^{n+1}}\right |^2<\infty.
\end{equation*}
This clearly implies the growth estimates in the statement of the lemma. Option 2 is similar, thus the result follows, completing the proof.
\end{proof}

In light of this lemma, we see that the coefficients $\mu(k)$ are functions of rapid decay or rapid growth at different infinities.  

Now that we have control over the coefficients of $D_{\beta,\alpha,1}$ we can finish the main result.  Interestingly, the two options from the previous lemma require different arguments.

\begin{lem} 
Suppose $\mu(k)$ is a function of rapid growth for $k>0$ and a function of rapid decay for $k<0$.  Then the operator $D_{\beta,\alpha,1}$ cannot have compact parametrices.
\end{lem}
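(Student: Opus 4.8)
The plan is to assume that $D_{\beta,\alpha,1}$ has compact parametrices and contradict this through its Fourier components $D_n$ of (\ref{DFcomp}). Under this assumption Lemma \ref{kernel_property} forces $D_{\beta,\alpha,1}$ to be invertible, hence (being sandwiched between $D_{\beta,\alpha,1}^{min}$ and $D_{\beta,\alpha,1}^{max}$, both of which are then also invertible) it coincides with both of them and is block diagonal with respect to the Fourier decomposition as in (\ref{DFcomp}). The key objects are the vectors $\eta_n:=(D_n^{max})^{-1}\chi_0\in\ell^2(\Z)$ already constructed in the proof of Lemma \ref{mu_ineq}; I will show that in the regime of the present lemma the single entry $\eta_n(1)$ equals a fixed nonzero number independent of $n$, whereas compact parametrices of $D_{\beta,\alpha,1}$ would force $\|\eta_n\|\to0$.

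First I would record the necessary ``decaying norm'' condition. Under the decomposition $f=\sum_n U^nf_n(\K)$ the operator $D_{\beta,\alpha,1}$ is, up to the unitary index shift $f_n\mapsto f_{n+1}$, the orthogonal sum $\bigoplus_n D_n$, so that $1+D_{\beta,\alpha,1}^*D_{\beta,\alpha,1}=\bigoplus_n(1+D_n^*D_n)$. A block-diagonal operator is compact iff each block is compact and the block norms tend to zero, so compactness of $(1+D_{\beta,\alpha,1}^*D_{\beta,\alpha,1})^{-1/2}$ forces $\|(1+D_n^*D_n)^{-1/2}\|\to0$ as $|n|\to\infty$; since the $D_n$ are invertible (Lemma \ref{mu_ineq}) this is the same as $\|D_n^{-1}\|\to0$. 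In particular $\|\eta_n\|\le\|(D_n^{max})^{-1}\|\,\|\chi_0\|\to0$. (Equivalently, apply compactness of $(1+D_{\beta,\alpha,1}^*D_{\beta,\alpha,1})^{-1/2}$, which respects this decomposition, to the vectors $U^n\eta_n(\K)$: they lie in mutually orthogonal Fourier sectors and satisfy $D_{\beta,\alpha,1}U^n\eta_n(\K)=U^{n+1}\chi_0(\K)$; see the appendix of \cite{KMRSW1}.)

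Now I would use the hypothesis that $\mu$ is of rapid growth for $k>0$ and of rapid decay for $k<0$, which is exactly Option 1 in the proof of Lemma \ref{mu_ineq}. For $k<0$ the factor $1/\mu(k)$ grows faster than any polynomial, while by the bounds (\ref{growth_cond}) the $\beta$-products in the formulas of Lemma \ref{formal_kernels} for $h_n$ are polynomially bounded; hence $\sum_{k<0}|h_n(k)|^2=\infty$ for every $n$. Since $\eta_n\in\ell^2(\Z)$ and $\eta_n(k)=c_1^-(n)h_n(k)$ for $k<0$, this forces $c_1^-(n)=0$; feeding this into the relation $(D_n\eta_n)(-1)=0$, which reads $\beta(n-1)\eta_n(-1)=\alpha(-1)\eta_n(0)$ with $\eta_n(-1)=0$ and $\alpha(-1)\ne0$, gives $\eta_n(0)=0$. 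Thus $\eta_n$ is supported in $\{k\ge1\}$.

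Finally, evaluating $D_n\eta_n=\chi_0$ at $k=0$ yields $1=(D_n\eta_n)(0)=\beta(n)\eta_n(0)-\alpha(0)\eta_n(1)=-\alpha(0)\eta_n(1)$, so $\eta_n(1)=-1/\alpha(0)$ for every $n$. Hence $\|\eta_n\|\ge|\eta_n(1)|=|\alpha(0)|^{-1}>0$ for all $n$, contradicting $\|\eta_n\|\to0$; therefore $D_{\beta,\alpha,1}$ has no compact parametrices. The step that needs genuine care is the first paragraph, namely converting compactness of the full parametrix into decay of the component norms $\|D_n^{-1}\|$ (once that is in place everything is forced by the one-line identity $\eta_n(1)=-1/\alpha(0)$). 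The uniform lower bound $\|D_n^{-1}\|\ge|\alpha(0)|^{-1}$ this produces is precisely the ``components fail to have decaying norms'' mechanism announced in the introduction, in contrast with the remaining option, where one instead produces an eigenvalue of infinite multiplicity.
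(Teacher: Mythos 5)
Your proof is correct and follows essentially the same route as the paper's: both arguments reduce compactness of the parametrices to the decay $\|D_n^{-1}\|\to 0$ of the Fourier components and contradict it by exhibiting a matrix entry of $D_n^{-1}$ that is independent of $n$ --- indeed your $\eta_n(1)=-1/\alpha(0)=-\mu(0)/(\beta(0)\mu(1))$ is exactly the paper's subdiagonal kernel entry $-\Delta_n^+(1,0)$. The only difference is that you extract this entry from the support of $\eta_n=D_n^{-1}\chi_0$ (via $c_1^-(n)=0$ and two scalar evaluations) rather than from the paper's explicit recursive formula for the full inverse kernel, which is a harmless variation.
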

\begin{proof}
By Lemma \ref{kernel_property}, the operator $D_n$ has no kernel or cokernel for all $n$.  Therefore we can solve the equation:
\begin{equation*}
(D_nf)(k) = \beta(k+n)\left(f(k) - \frac{\beta(k)}{\beta(k+n)}\frac{\mu(k+1)}{\mu(k)}f(k+1)\right) = g(k)
\end{equation*}
for any $g\in\ell^2(\Z)$.  Using the fact that $\mu(k)$ is a function of rapid decay for $k<0$, and solving recursively, we get the following formulas:
\begin{equation*}
\begin{aligned}
&(D_n^{-1}g)(k) = \\
&\left\{
\begin{aligned}
&-\sum_{j=-\infty}^{k-1} \frac{\beta(k)\cdots\beta(k+n-1)}{\beta(j)\cdots\beta(j+n)}\cdot\frac{\mu(j)}{\mu(k)}g(j) = -\sum_{j=-\infty}^{k-1}\Delta_n^+(k,j)g(j) &&\textrm{ for }n>0 \\
&-\sum_{j=-\infty}^{k-1} \frac{\beta(j-1)\cdots\beta(j+n+1)}{\beta(k-1)\cdots\beta(k+n)}\cdot\frac{\mu(j)}{\mu(k)}g(j) = -\sum_{j=-\infty}^{k-1}\Delta_n^-(k,j)g(j) &&\textrm{ for }n\leq 0,
\end{aligned}
\right.
\end{aligned}
\end{equation*}
where we introduced the notation $\Delta^\pm_n$ for the integral kernels of operators $D_n^{-1}$.
To obtain a contradiction we consider the sub-diagonal matrix coefficients:
\begin{equation*}
\Delta_n^+(k,k-1) = \frac{\beta(k)\cdots\beta(k+n-1)}{\beta(k-1)\beta(k)\cdots\beta(k+n-1)}\cdot\frac{\mu(k-1)}{\mu(k)} = \frac{\mu(k-1)}{\beta(k-1)\mu(k)}
\end{equation*}
which are independent of $n$.  On the other hand we have
\begin{equation*}
|\Delta_n^+(k,k-1)| = |\langle \chi_k, D_n^{-1}\chi_{k-1}\rangle| \le \|D_n^{-1}\|\|\chi_k\|\|\chi_{k-1}\| = \|D_n^{-1}\|,
\end{equation*}
where, as before, $\chi_k(j) = 1$ when $j=k$ and $0$ otherwise.
However, if $D_{\beta,\alpha,1}$ has compact parametrices then $\|D_n^{-1}\|$ goes to $0$ as $n\to\pm\infty$ as $U^{-1}D_{\beta,\alpha,1}$ is a direct sum of operators $D_n$.  This is in clear contradiction with the above calculation. 
\end{proof}

Now we consider option 2 of Lemma \ref{mu_ineq}.
\begin{lem}
Suppose $\mu(k)$ is a function of rapid decay for $k>0$ and a function of rapid growth for $k<0$.  Then the operator $D_{\beta,\alpha,1}$ cannot have compact parametrices.
\end{lem}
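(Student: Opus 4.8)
The plan is to exhibit, under the Option~2 hypothesis, an eigenvalue of infinite multiplicity for the operator $U^{-1}D_{\beta,\alpha,1}$, which is incompatible with compact parametrices. Recall from the Fourier decomposition (\ref{DFcomp}) that $U^{-1}D_{\beta,\alpha,1}=\bigoplus_{n\in\Z}D_n$ with $(D_nf)(k)=\beta(k+n)f(k)-\beta(k)\frac{\mu(k+1)}{\mu(k)}f(k+1)$, that $D_{\beta,\alpha,1}$ has compact parametrices iff $\bigoplus_n D_n$ does, and that by Lemmas \ref{kernel_property} and \ref{mu_ineq} each $D_n$ is forced to coincide with $D_n^{min}=D_n^{max}$. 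I would fix a value $\lambda$ in the range of $\beta$ and set $m_\lambda:=\min\{m:\beta(m)=\lambda\}$, which is well defined since $\beta(m)\ge c_2(|m|+1)\to\infty$. For each $n\in\Z$, put $k_0:=m_\lambda-n$ and construct a candidate eigenvector of $D_n$ for the eigenvalue $\lambda$: set $h^{(n)}(k)=0$ for $k>k_0$, $h^{(n)}(k_0)=1$, and recursively $h^{(n)}(k)=\frac{\beta(k)\mu(k+1)}{(\beta(k+n)-\lambda)\mu(k)}h^{(n)}(k+1)$ for $k<k_0$; this is legitimate because $\beta(k+n)\ne\lambda$ whenever $k+n<m_\lambda$, by minimality of $m_\lambda$, and a direct check shows $D_nh^{(n)}=\lambda h^{(n)}$ formally (the vanishing of $h^{(n)}$ above $k_0$ handles the equation at and above $k_0$).

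The crucial step is to prove $h^{(n)}\in\ell^2(\Z)$. Writing $h^{(n)}(k)=\frac{\mu(k_0)}{\mu(k)}\prod_{j=k}^{k_0-1}\frac{\beta(j)}{\beta(j+n)-\lambda}$ for $k\le k_0$, I would estimate the $\beta$-product by telescoping $\prod_{j=k}^{k_0-1}\frac{\beta(j)}{\beta(j+n)}$ into a fixed finite product times $\prod_{j=k}^{k+n-1}\beta(j)=O\big((1+|k|)^{n}\big)$ (for $n>0$; $n\le 0$ is analogous and easier), and then absorbing the shift by $\lambda$ through $\prod_{j=k}^{k_0-1}\big(1+\frac{\lambda}{\beta(j+n)-\lambda}\big)$, which is $O\big((1+|k|)^{O(1)}\big)$ because $\frac{\lambda}{\beta(j+n)-\lambda}=O(1/|j|)$ as $j\to-\infty$ by the linear lower bound on $\beta$. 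Hence the product is at most $C_n(1+|k|)^{p_n}$ for some $p_n$ independent of $k$. The Option~2 hypothesis gives, for every $N$, $|\mu(k_0)/\mu(k)|\le |\mu(k_0)|\,C_N^{-1}(1+|k|)^{-N}$ for $k<0$; choosing $N=p_n+3$ yields $|h^{(n)}(k)|\le C(1+|k|)^{-3}$, so $h^{(n)}\in\ell^2(\Z)$, and since $D_nh^{(n)}=\lambda h^{(n)}\in\ell^2(\Z)$ we get $h^{(n)}\in\mathrm{dom}(D_n^{max})=\mathrm{dom}(D_n)$.

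This produces, for every $n\in\Z$, a genuine eigenvector of $D_n$ with eigenvalue $\lambda$; since these live in mutually orthogonal Fourier components, $\lambda$ is an eigenvalue of $\bigoplus_n D_n=U^{-1}D_{\beta,\alpha,1}$ of infinite multiplicity. To finish I would invoke the elementary fact that an operator $T$ with compact parametrices has no infinite-multiplicity eigenvalue: if $Tf_j=\lambda f_j$ with $\{f_j\}$ orthonormal, then $\langle f_j,(1+T^*T)f_j\rangle=1+|\lambda|^2$, hence $\langle f_j,(1+T^*T)^{-1}f_j\rangle\ge(1+|\lambda|^2)^{-1}>0$ by Cauchy--Schwarz, contradicting $\langle f_j,(1+T^*T)^{-1}f_j\rangle\to 0$, which holds since $(1+T^*T)^{-1}$ is compact and $f_j\rightharpoonup 0$. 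As $D_{\beta,\alpha,1}$ and $U^{-1}D_{\beta,\alpha,1}$ have compact parametrices simultaneously (see the appendix of \cite{KMRSW1}), this contradiction shows $D_{\beta,\alpha,1}$ cannot have compact parametrices.

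The main obstacle is the $\ell^2$ estimate in the second paragraph. The crude two-sided bounds $c_2(|k|+1)\le\beta(k)\le c_1(|k|+1)$ only control the $\beta$-product by $(c_1/c_2)^{|k|}$, an exponential factor that would swamp any polynomial decay one could hope to extract from $\mu$; the point is that the telescoping cancellation collapses the $\beta$-product to a polynomially bounded quantity, after which the rapid growth of $\mu$ at $-\infty$ comfortably dominates. Getting the $\lambda$-perturbation term to stay polynomial (rather than exponential) is the only other place where some care is needed.
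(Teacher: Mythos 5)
Your proof is correct and follows essentially the same route as the paper's: both arguments exhibit the infinite-multiplicity eigenvalue $\lambda=\beta(m_\lambda)$ of $\bigoplus_n D_n$ by choosing $m_\lambda$ minimal so the recursion truncates at $k_0=m_\lambda-n$, and both use the rapid growth of $\mu(k)$ as $k\to-\infty$ to get square-summability of the resulting eigenvectors. The only differences are technical: you bound the product of $\beta$-ratios by telescoping it to a polynomially bounded quantity rather than via the paper's exponential-of-a-convergent-sum estimate, and you prove directly (rather than cite) that an operator with compact parametrices cannot have an eigenvalue of infinite multiplicity.
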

\begin{proof}
To obtain a contradiction we study the spectrum of the operator $D_{\beta,\alpha,1}^{max}$.  Since it has compact parametrices, the Fredholm property of $D_{\beta,\alpha,1}^{max}-I$ implies that $\textrm{Ran }((D_n^+)^{max} - \lambda I)$ is closed, which means that the continuous spectrum $\sigma_c((D_n^+)^{max})$ is empty.  

Next we study the eigenvalue equation $(D_nf)(k) = \lambda f(k)$, that is:
\begin{equation}\label{eigen_eq}
\beta(k+n)f(k) - \beta(k)\frac{\mu(k+1)}{\mu(k)}f(k+1) = \lambda f(k).
\end{equation}
Pick a value $\beta(l)$ of the coefficient $\beta$ such that $\beta(j)\ne\beta(l)$ for all $j<l$. This is possible since $\beta(j)\to\infty$ as $j\to-\infty$.
Consider equation (\ref{eigen_eq}) for eigenvalue $\lambda_l = \beta(l)$. Then, substituting $k=l-n$ into (\ref{eigen_eq}), we get that $f(l-n+1)=0$.
Consequently, equation (\ref{eigen_eq}) can be solved recursively, yielding a one-parameter formal solution generated by:
\begin{equation*}
\begin{aligned}
f_l(k) &=\left\{
\begin{aligned}
&\frac{1}{\mu(k)}\left(\prod_{j=k}^{l-n-1}1+ \frac{\beta(j)- \beta(j+n) + \lambda_l}{\beta(j+n)-\lambda_l}\right) &&k\le l-n \\
&0 && k>l-n.
\end{aligned}\right. 
\end{aligned}
\end{equation*}
Notice that by our assumption on $\beta(l)$ the denominators in the above equation are never zero.
The question then is when does $f_l\in\ell^2(\Z)$?
First we estimate from above each factor in the formula for $f_l$ as follows:
\begin{equation*}
\left|1 + \frac{\beta(j)-\beta(j+n)+\lambda_l}{\beta(j+n)-\lambda_l}\right|^2\le \textrm{exp}\left(\left|\frac{\beta(j)-\beta(j+n)+\lambda_l}{\beta(j+n)-\lambda_l}\right|^2\right).
\end{equation*}
This implies that for $k\le l-n$
\begin{equation*}
|f_l(k)|\le \textrm{exp}\left[\sum_{j=k}^{l-n-1}\left|\frac{\beta(j)-\beta(j+n)+\lambda_l}{\beta(j+n)-\lambda_l}\right|^2\right]\frac{1}{|\mu(k)|}.
\end{equation*}

Notice that we have $|\beta(j)-\beta(j+n) +\lambda_l|\le \textrm{const}$ uniformly in $j$ by (\ref{growth_cond}), so, using additionally (\ref{growth_cond}) in the denominator, we obtain:
\begin{equation*}
\sum_{j=k}^{l-n-1} \left|\frac{\beta(j)-\beta(j+n)+\lambda_l}{\beta(j+n)-\lambda_l}\right|^2\le \sum_{j=k}^{l-n-1}\frac{\textrm{const}}{j^2+1}<\textrm{const}.
\end{equation*}
Therefore we have
\begin{equation*}
\sum_{k\in\Z}|f_l(k)|^2 = \sum_{k=-\infty}^{l-n-1}|f_l(k)|^2 \le \sum_{k=-\infty}^{l-n-1}\frac{\textrm{const}}{|\mu(k)|^2}<\infty,
\end{equation*}
and so $f_l\in\ell^2(\Z)$.  Thus $\lambda_l$ is an eigenvalue for $D_n^{max}$ for all $n\in\Z$.  So $\lambda_l\in\sigma(U^{-1}D_{\beta,\alpha,1}^{max})$.  Crucially, since $\lambda_l$ is independent of $n$, we see that $\lambda_l$ has an infinite degeneracy.  As explained in the appendix of \cite{KMRSW1}, if $D_{\beta,\alpha,1}^{max}$ has compact parametrices then its spectrum is either empty, or is the whole plane $\C$, or consist of eigenvalues going to infinity, which is in direct contradiction with the infinite degeneracy. Therefore $D_{\beta,\alpha,1}$ does not have compact parametrices.
\end{proof}
To summarize, we reduced the existence of an operator $D_{\beta,\alpha,w}$ in $H_{\tau_w}$, $D_{\beta,\alpha,w}^{min}\subset D_{\beta,\alpha,w}\subset D_{\beta,\alpha,w}^{max}$ and having compact parametrices to the existence of an operator $D_{\beta,\alpha,1}$ in $\ell^2(\Z)$ with coefficients satisfying (\ref{growth_cond}) and conditions of Lemma \ref{mu_ineq} and having compact parametrices. However, as seen in the last two lemmas, that leads to contradictions and the proof of the theorem is complete.
\end{proof}

\end{document}